\title[Second order cubic corrections]{Second order  cubic corrections of  large deviations for perturbed  random walks}
\author{Giancarlos Oviedo, Gonzalo Panizo and Alejandro F. Ram\'\i
  rez}
\email{g.oviedovalverde@gmail.com, gonzalo.panizo@gmail.com, \newline \ \ \ \ \ aramirez@mat.uc.cl}
\address{Instituto de Matem\'atica y Ciencias Afines, Universidad 
  Nacional de Ingenier\'\i a, Lima, Per\'u, Facultad de Matem\'aticas, Pontificia Universidad Cat\'olica 
  de Chile, Santiago, Chile and NYU-ECNU Institute of Mathematical Sciences at NYU Shanghai, 
  Shanghai, China}
\date{\today}
\keywords{Random walk in random environment, Beta random walk,
  GUE Tracy-Widom distribution.}
\subjclass[2020]{60K37, 82D30, 82C23, 82C41.}
  \crefname{theorem}{Theorem}{Theorems}
  \crefname{thm}{Theorem}{Theorems}
  \crefname{mainthm}{Theorem}{Theorems}
  \crefname{lemma}{Lemma}{Lemmas}
  \crefname{lem}{Lemma}{Lemmas}
  \crefname{remark}{Remark}{Remarks}
  \crefname{prop}{Proposition}{Propositions}
  \crefname{defn}{Definition}{Definitions}
  \crefname{corollary}{Corollary}{Corollaries}
  \crefname{section}{Section}{Sections}
  \crefname{figure}{Figure}{Figures}
\newtheorem{thm}{Theorem}[section]
\newtheorem{theorem}[thm]{Theorem}
\newtheorem{lemma}[thm]{Lemma}
\newtheorem{corollary}[thm]{Corollary}
\newtheorem{prop}[thm]{Proposition}
\numberwithin{equation}{section}
  \newcounter{cnstcnt}
\newcommand{\newconstant}{%
\refstepcounter{cnstcnt}%
\ensuremath{C_{\thecnstcnt}}}
\newcommand{\oldconstant}[1]{\ensuremath{C_{{\ref{#1}}}}}
\newcounter{cnst}
\newcommand{\newc}{%
\refstepcounter{cnst}%
\ensuremath{c_{\thecnst}}}
\newcommand{\oldc}[1]{\ensuremath{c_{{\ref{#1}}}}}
\newcounter{cnsta}
\newcommand{\newca}{%
\refstepcounter{cnsta}%
\ensuremath{{\mathfrak a}_{\thecnsta}}}
\newcommand{\oldca}[1]{\ensuremath{{\mathfrak a}_{{\ref{#1}}}}}
\thanks{Alejandro Ram\'\i rez  has been 
partially supported by Iniciativa Cient\'\i fica Milenio NC120062 
and by Fondo Nacional de Desarrollo Cient\'\i fico 
y Tecnol\'ogico grant 1180259}
\newtheorem{definition}{Definition}
\newtheorem{proposition}{Proposition}
\begin{document}

\maketitle

\begin{abstract}
  We prove that the Beta random walk, introduced in \cite{Barr-Cor} 2017, has
  second order cubic fluctuations
    from the large deviation principle  
 of the GUE Tracy-Widom
 type for arbitrary values $\upalpha>0$ and $\upbeta>0$ of the
 parameters of the Beta distribution, removing previous restrictions
 on their values. Furthermore, we prove that the GUE Tracy-Widom fluctuations
  still hold in the intermediate disorder regime. We also show that any random walk in space-time random
  environment that matches certain moments with the
  Beta random walk also has GUE Tracy-Widom
  fluctuations in the intermediate disorder regime. As a corollary we
  show  the emergence of GUE Tracy-Widom fluctuations
  from the large deviation principle for trajectories ending at boundary points for random walks
  in space (time-independent) i.i.d. Dirichlet random environment
  in dimension $d=2$ for a class of asymptotic behavior of the parameters. \end{abstract}

\section{Introduction} The Beta-random walk, introduced in
\cite{Barr-Cor}, is a random walk in 
space-time i.i.d. random environment in $\mathbb Z$ which is exactly 
solvable. Given $\upalpha,\upbeta\in (0,\infty)$, for each space time point $(t,x)\in\mathbb N\times\mathbb Z$, 
a Beta-random variable $B_{t,x}$ of parameters $\upalpha$ and $\upbeta$
is associated to the space-time point $(t,x)$, so that $(B_{t,x})_{(t,x)\in\mathbb N\times\mathbb Z}$
are i.i.d. The Beta-random walk $(X_t)_{t\in\mathbb N}$ in the
environment $(B_{t,x})_{(t,x)\in\mathbb N\times\mathbb Z}$
is then defined by its transition 
probabilities 

$$
P\left(X_{t+1}=x+1|X_t=x\right)=B_{t,x} \qquad {\rm for}\ x\in\mathbb Z.
$$
A general quenched large deviation principle for the position of the walk, which includes the Beta-random 
walk, was proved in \cite{RasA-Sepp-Y}. Subsequently, Barraquand and Corwin in 
\cite{Barr-Cor}, obtained an explicit formula for the quenched
large deviation rate function and also
proved for the case $\upalpha=\upbeta=1$, second order cube-root scale corrections to this large 
deviation principle, with convergence to the GUE Tracy-Widom
distribution. This was
recently extended for the case $\upalpha=1$ and $\upbeta>0$ in \cite{K21}.

Directed polymers in random environment in dimension $1+1$ are
in strong disorder for all inverse temperatures $\beta>0$, and it is conjectured that
for general distributions, there
are second order cube-root fluctuations for the rescaled free energy
with GUE Tracy-Widom
statistics.
Random walks in space-time i.i.d. random environments share several
features with directed polymers in random environment. It is hence
natural to  conjecture that a similar behavior would occur: second order corrections to the quenched
large deviation principle for random walk in space-time i.i.d. random
environment
for a large class of distributions of the random environment, still
 of scale cube-root, with convergence to the GUE Tracy-Widom
distribution. Here, the strength of the disorder given by the random
environment, plays the role of the inverse temperature (for
dimensions $d\ge 3+1$,   regimes analogous to
the strong and weak disorder regimes of directed polymers
exist for the random walk in space-time i.i.d. environment
\cite{BMRS19}). The intermediate disorder regime probes the
strong-weak
disorder transition of directed polymers in dimensions $1+1$, by taking the limit $\beta\to 0$
as the length of the polymer is increased. For the special case
$\beta_N=\hat\beta N^{-1/4}$, it was shown in \cite{AKQ14}, that
the fluctuations crossover between the Edwards-Willkinson regime to
the
GUE Tracy-Widom regime as $\hat\beta\to\infty$, and the
Kardar-Parisi-Zhang equation (KPZ) appears
in the limit. Subsequently Krishnan and Quastel in \cite{KQ18}, showed
through
a perturbation argument involving matching a certain number
of moments with the log-gamma polymer \cite{S12}, the universality of
the GUE Tracy-Widom
distribution when $1\gg\beta_N\gg O(N^{-1/4})$ (see similar ideas used
within the context of chaos phenomena and ultrametricity in the mixed
$p$-spin model in \cite{AC16}).

The intermediate disorder has also been studied within the context of  random walks in space-time
i.i.d. random
environment in $\mathbb Z$ which are perturbations of the simple symmetric random
walk. Indeed, in \cite{CG17}, the intermediate disorder regime was
proven for a random walk in space-time i.i.d. random environment
defined by its probability to jump to the right at a given time $t$ from
a site $x$ as
$\frac{1}{2}+t^{-1/4}\xi(x,t)$ with $(\xi(x,t))_{x\in\mathbb Z,
  t\in\mathbb N}$  i.i.d. with values in $[0,1]$. This corresponds to
the case $\beta_N=\hat\beta N^{-1/4}$ of directed polymers. A second result in \cite{CG17} gives an intermediate
disorder
regime limit of the same kind for the logarithmic fluctuations of
the transition probability $P_{0,\omega}(X_t=y)$
of the Beta random walk, with scaling  $y=\gamma 
t+xt^{1/2}$, $\gamma\in (0,1/2)$, $x\in\mathbb R$ with time-dependent parameters
$\upalpha_t=\upbeta_t=
t^{1/2}$, as $t\to\infty$ is time.

In this article we first prove convergence to the GUE Tracy-Widom 
distribution of the second order fluctuations of the Beta random walk 
for any $\upalpha>0$ and $\upbeta>0$. This result removes
the restrictions of \cite{Barr-Cor}, where the
convergence was proven for $\upalpha=\upbeta=1$ and of \cite{K21}
where
it was extended to the
case $\upalpha=1$ and $\upbeta>0$. Furthermore, our result shows
that we can keep the same range $\theta\in (0,0.5)$ parametrizing the
target points $x(\theta)$ (which is expressed as a rational function
invlolving polygamma functions) of the large deviation event $\{X_t\ge
x(\theta) t\}$
as in \cite{Barr-Cor}, as long
as $\upalpha\ge 0.7$ and $\upbeta>0$. As a second result we 
prove convergence to the GUE Tracy-Widom distribution  for the logarithmic fluctuations of the
probability  $P_{0,\omega}(X_t\ge x_tt)$, for $\upalpha_t\to\infty$,
$\upbeta_t\to\infty$
and  $x_t\to 1$, under an appropriate condition on the growth of
$\upalpha_t$ and $\upbeta_t$. For the case
$\upalpha=t^r$,
$\upbeta=t^s$, this condition reduces to
 $ r+\max(r-s,0)<1$.
In the third result of this article
we   show that the
corresponding
result for random walks in space-time i.i.d. environments which are
perturbations of the Beta random walk still holds, along the lines
of \cite{KQ18}.
An interesting corollary of our perturbation results, is the
appearance of the GUE Tracy-Widom fluctuations for random walks in space
i.i.d. (static) Dirichlet environment in dimension $d=2$, for certain
asymptotic behaviour of the parameters, corresponding to events
which force the random walk to move through
directed trajectories.
 The main challenge of the
proofs
 is doing a sophisticated steep-descent
analysis  involving polygamma functions.

In what follows we give a precise formulation of the main results of
this article.

\medskip
\section{Main results}
We first define the model of random walk in random environment on
$\mathbb Z^d\times\mathbb N$, where the factor $\mathbb Z^d$ will
represent the space where it moves, while $\mathbb N$ is the time. Let $|\cdot |_1$ denote the $l_1$ norm.
Let $U:=\{e\in\mathbb Z^d:|e|_1=1\}$ and $\mathcal P:=\{(p(e))_{e\in U}\in [0,1]^{2d}: \sum_{e\in U}
p(e)\le 1\}$.
Let $\Omega=\mathcal P^{\mathbb Z^d\times\mathbb N}$. We call $\Omega$
the
{\it environmental space} and each element
$\omega:=(\omega(x,t))_{x\in\mathbb Z^d, t\in \mathbb N}\in\Omega$, with
$\omega(x,t)=(\omega_e(x,t))_{e\in U}\in\mathcal P$, an {\it
  environment}. Note that  we do not assume necessarily that
$\sum_{e\in U}\omega_e(x,t)=1$, which can be interpreted
as jump probabilities having a non-vanishing probability of absorption at each
step. Given $\omega\in\Omega$, consider the (sub)-Markov chain
$(X_t)_{t\ge 0}$ with state space $\mathbb Z^d$ starting from
$x\in\mathbb Z^d$,
defined through its transition probabilities

$$
P_{x,\omega}(X_{t+1}=y+e|X_t=y)=\omega_e(y,t),
$$
for $y\in\mathbb Z^d$ and $t\ge 0$, with $P_{x,\omega}(X_0=x)=1$. We call the
(sub)-Markov process $(X_t)_{t\ge 0}$ a random walk in the space-time  environment
$\omega$ on $\mathbb Z^d$ and denote by $P_{x,\omega}$ its law. If $\mathbb
P$ is a probability measure defined on $\Omega$, we denote the law
$P_{x,\omega}$
the {\it quenched law} of the {\it random walk in  random
  environment}, and by $E_{x,\omega}$ the expectation corresponding to $P_{x,\omega}$.

A special case of a random walk in random environment is
the Beta random walk,  defined for $d=1$, and where the
environment
is space-time i.i.d. Recall that a random variable $B$ is a Beta random variable of parameters $\upalpha >0$ and $\upbeta >0$ if for every $r\in [0,1]$ we have that
\[
P(B\le r) = \int_0^r x^{\upalpha-1}(1-x)^{\upbeta-1}\frac{\Gamma(\upalpha+\upbeta)}{\Gamma(\upalpha)\Gamma(\upbeta)}dx
\]
Let $(B_{x,t})_{x\in\mathbb{Z},t\ge 0}$  be an i.i.d. collection of
Beta 
random variables of parameter $\upalpha>0$ and $\upbeta>0$.
Let $\mathbb P_{\upalpha, \upbeta}$ be the joint law of $\omega$ with
$\omega_1(x,t)= B_{x,t}$ and $\omega_{-1}(x,t)=1-B_{x,t}$. Then, the random walk in  random environment on $\mathbb Z$ with law
$\mathbb P_{\upalpha,\upbeta}$ is called a {\it Beta random walk}.
We will denote by $\mathbb E_{\upalpha,\upbeta}$ the corresponding expectation.

An  important class of a random walks in random environment
corresponds to the case in which the law $\mathbb P$  of the
environment
is concentrated on environments $\omega$ which are
time-independent, so that $\omega(x):=\omega(t,x)$ for all $t\ge 0$
and $x\in\mathbb Z^d$.
In this case we will use the notation $\omega(x)=(\omega(x,e))_{e\in
  U}$.
A particular example of a random walk in (time-independent) random
environment on $\mathbb Z^d$, is the random walk in Dirichlet
environment (RWDE). To define the RWDE, let us use the
notation $U=\{e_1, e_2,\ldots, e_{2d}\}$ with the convention
$e_{d+i}=-e_i$ for $1\le i\le d$. For each $1\le i\le 2d$, let $\upalpha_i>0$. The Dirichlet
distribution with parameters $\upalpha:=(\upalpha_i)_{i\in\{1,\ldots, k\}}$ is the
distribution
on $\mathcal P_1 =\{(p(e))_{e\in U}\in [0,1]^{2d}: \sum_{e\in U}p(e)\le 1\}$
 which has a density with respect to
the Lebesgue measure in $\mathcal P_1$ given by

$$
\frac{\Gamma(\upalpha_1+\cdots+\upalpha_{k})}{\Gamma
  (\upalpha_1)\cdots\Gamma (\upalpha_{k})} u_1^{\upalpha_1-1}\cdots
u_{k}^{\upalpha_{k}-1}\prod_{i\ne i_0} d u_i,
$$
where $i_0$ is an irrelevant choice of index among $\{1,\ldots,k\}$
and $u_{i_0}=1-\sum_{i\ne i_0}u_i$.
The random walk in Dirichlet environment of parameter
$\upalpha:=(\upalpha_i)_{i\in\{1,\ldots, 2d\}}$ is defined as the random walk on $\mathbb Z^d$ whose
environment $\omega$ is time-independent and has a law $\mathbb
P_{\upalpha}$ under
which $(\omega(x))_{x\in\mathbb Z^d}$ are i.i.d. and have Dirichlet
distribution
of parameter $\upalpha$.

In what follows we will use the
notation
$P_\omega:=P_{0,\omega}$ for the quenched law of the Beta random walk. Define $P_\omega(t,x):=P_\omega(X_t\ge
x)$. In \cite{RasA-Sepp} and \cite{RasA-Sepp-Y}, it was shown that $\mathbb P_{\upalpha, \upbeta}$-a.s.

$$
\lim_{t\to\infty} \frac{1}{t}\log P_\omega(t,xt)=-I(x),
  $$
  where $I$ is the Legendre transform of

  $$
\lambda(s):=\lim_{t\to\infty}\frac{1}{t}\log\left(E_\omega\left[e^{sX_t}\right]\right)\quad
s\in\mathbb R,
$$
where the right-hand side limit exists $\mathbb P_{\upalpha,\upbeta}$-a.s.  
In \cite{Barr-Cor} a closed formula for $I$ was obtained using
critical Fredholm determinant asymptotics, so that $I$ is
implicitely defined by

\begin{equation}
  \label{xtheta}
x(\theta)=\frac{\Psi_1(\theta+\upalpha+\upbeta)+\Psi_1(\theta)-2\Psi_1(\theta+\upalpha)}{\Psi_1(\theta)-
  \Psi_1(\theta+\upalpha+\upbeta)}
\end{equation}
and

\begin{align*}
I(x(\theta))&=\frac{\Psi_1(\theta+\upalpha+\upbeta)-\Psi_1(\theta+\upalpha)}{\Psi_1(\theta)-
  \Psi_1(\theta+\upalpha+\upbeta)}
\left(\Psi(\theta+\upalpha+\upbeta)-\Psi(\theta)\right)\\
&+\Psi(\theta+\upalpha+\upbeta)-\Psi(\theta+\upalpha),
\end{align*}
where $\Psi$ is the digamma function defined as
$\Psi(z)=\Gamma'(z)/\Gamma(z)$,
$\Psi_1(z)=\Psi'(z)$  is the trigamma function, $z\in\mathbb C$,  $\theta\in (0,\infty)$, and as $\theta$ ranges from $0$ to
$\infty$, $x(\theta)$ ranges from $1$ to
$(\upalpha-\upbeta)/(\upalpha+\upbeta)$.
Define $\sigma(\theta)$ by the relation

\begin{align}
  \nonumber
  &2\sigma(\theta)^3=
  \Psi_2(\theta+\upalpha)-\Psi_2(\theta+\upalpha+\upbeta)\\
  \label{sigma}
  &    +\frac{\Psi_1(\theta+\upalpha)-\Psi_1(\theta+\upalpha+\upbeta)}{\Psi_1(\theta)-
  \Psi_1(\theta+\upalpha+\upbeta)}
\left(\Psi_2(\theta+\upalpha+\upbeta)-\Psi_2(\theta)\right),
\end{align}
where $\Psi_2(z):=\Psi_1'(z)$.
By Lemma 5.3 of \cite{Barr-Cor}, the right hand side 
of (\ref{sigma}) is positive so that $\sigma(\theta)>0$.
Our first result is an extension of Theorem 1.15 of \cite{Barr-Cor}
and Theorem 1.2 of \cite{K21}
which includes the case  $\upalpha\ge 0.7$, $\upbeta>0$  and
$\theta\in (0,0.5)$.
Recall that the GUE Tracy-Widom distribution is defined by
$F_{GUE}(x)=\det(I-K_{Ai})_{L^2(x,+\infty)}$, $x\in\mathbb R$, where
$\det(I-K_{A_i})_{L^2(x,+\infty)}$ is the Fredholm determinant of the
Airy kernel $K_{Ai}$ 
kernel defined as

$$
K_{Ai}(u,v)=\frac{1}{(2\pi i)^2}\int_{e^{-2\pi i/3}\infty}^{e^{2\pi
    i/3}\infty}
\int_{e^{-\pi i/3}\infty}^{e^{\pi i/3}\infty}
\frac{e^{z^3/3-zu}}{e^{w^3/3-wv}}\frac{1}{z-w}dzdw,
$$
where the contours for $z$ and $w$ do not intersect.

Our first result is an extension of Theorem 1.15 of \cite{Barr-Cor} and
the implication of Theorem 1.2 of \cite{K21} for the Beta random walk.

\medskip
\begin{theorem}
  \label{one} For all $\upalpha>0$, $\beta>0$ and 
  $\theta\in (0,\min\{0.5, 0.72\times\upalpha\})$, we have that

  \begin{equation}
\nonumber
\lim_{t\to\infty}\mathbb 
P_{\upalpha,\upbeta}\left(\frac{\log\left(P_\omega(t,x(\theta)t)\right)+I(x(\theta))t}{t^{1/3}\sigma(\theta)}\le 
  y 
  \right)=F_{GUE}(y). 
  \end{equation}
\end{theorem}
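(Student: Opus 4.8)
The plan is to start from an exact solvability identity and then carry out a saddle-point (steep-descent) analysis of the resulting Fredholm determinant. Concretely, I would invoke the exact formula of Barraquand and Corwin (obtained via a $q\to 1$ degeneration of a $q$-deformed exactly solvable system) expressing a Laplace-type transform of the quenched probability, $\mathbb E_{\upalpha,\upbeta}\bigl[\varphi(\zeta\,P_\omega(t,n))\bigr]$, as a Fredholm determinant $\det(I + K^{(t)}_\zeta)_{L^2(\mathcal C)}$, where $K^{(t)}_\zeta$ is an explicit double contour integral whose integrand is, schematically, $\frac{\exp(t\,G(w))}{\exp(t\,G(z))}\,\Phi_{t,\zeta}(z,w,v,v')\,\frac{1}{z-w}$, the function $G=G_{x(\theta)}$ being assembled from $\log\Gamma$'s. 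Its first two derivatives involve the digamma $\Psi$ and trigamma $\Psi_1$, and one verifies that relation (\ref{xtheta}) is exactly the assertion that $\theta$ is a \emph{double} critical point, $G'(\theta)=G''(\theta)=0$, while $G'''(\theta)$ — governed by $\Psi_2$ — matches $\sigma(\theta)^3$ through (\ref{sigma}), and $-G_{x(\theta)}(\theta)$ reproduces $I(x(\theta))$.

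Second, I would fix the scaling: take $n=\lfloor x(\theta)t\rfloor$ and choose the deformation parameter $\zeta=\zeta_t$ so that $-\log\zeta_t=I(x(\theta))\,t-y\,\sigma(\theta)\,t^{1/3}+o(t^{1/3})$, absorbing the rounding of $n$. Since $P_\omega(t,n)=e^{-I(x(\theta))t+O(t^{1/3})}$ by the large deviation principle, the quantity $\varphi(\zeta_t P_\omega(t,n))$ behaves like a smoothed indicator of the event $\{\log P_\omega(t,n)+I(x(\theta))t\le y\,\sigma(\theta)\,t^{1/3}\}$, so that $\mathbb E_{\upalpha,\upbeta}[\varphi(\zeta_t P_\omega(t,n))]$ and the probability in the statement have the same limit. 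It therefore suffices to show $\det(I+K^{(t)}_{\zeta_t})\to F_{GUE}(y)$.

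Third comes the steep-descent step. I would deform the $z$- and $w$-contours so that they pass through the double critical point $\theta$, locally along the steepest-descent (resp. steepest-ascent) rays leaving $\theta$ at angles $\pm\pi/3$ (resp. $\pm 2\pi/3$) dictated by the sign of $G'''(\theta)$, then rescale $z=\theta+\sigma(\theta)^{-1}t^{-1/3}\tilde z$, $w=\theta+\sigma(\theta)^{-1}t^{-1/3}\tilde w$, together with the matching rescaling of the kernel variables. The Taylor expansion $t\,G(z)=\tfrac{t\,G'''(\theta)}{6}(z-\theta)^3+o(1)=\tfrac13\tilde z^3+o(1)$ makes the exponential factor converge to $e^{\tilde w^3/3-\tilde w v'}/e^{\tilde z^3/3-\tilde z v}$, while $\Phi_{t,\zeta_t}$ produces the $y$-dependent shift; equivalently, the rescaled kernel converges pointwise to the Airy kernel $K_{Ai}$ acting on $L^2(y,\infty)$. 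To upgrade this to convergence of determinants one needs uniform-in-$t$ Hilbert--Schmidt/trace-norm bounds on $K^{(t)}_{\zeta_t}$ together with Hadamard's inequality and dominated convergence, after which $\det(I+K^{(t)}_{\zeta_t})\to\det(I-K_{Ai})_{L^2(y,\infty)}=F_{GUE}(y)$, the sign change being the standard consequence of the contour reorientation.

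The main obstacle — and the source of the hypothesis $\theta\in(0,\min\{0.5,\,0.72\,\upalpha\})$ — is the \emph{global} control of the contours: one must produce contours with the prescribed local behavior at $\theta$ that stay inside the domain of analyticity (avoiding the poles of the Gamma factors) and along which $\mathrm{Re}\,G(z)\ge\mathrm{Re}\,G(\theta)$ on the whole $z$-contour and $\mathrm{Re}\,G(w)\le\mathrm{Re}\,G(\theta)$ on the whole $w$-contour, with a quantitative gap sufficient to dominate the tails. Establishing these monotonicity and convexity inequalities for a function built from $\Psi$, $\Psi_1$, $\Psi_2$ at shifted arguments $z$, $z+\upalpha$, $z+\upalpha+\upbeta$, uniformly over all $\upalpha>0$, $\upbeta>0$ and the stated range of $\theta$, is precisely the delicate polygamma steep-descent analysis flagged in the introduction. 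I would carry it out using the series and integral representations $\Psi_1(z)=\sum_{k\ge 0}(z+k)^{-2}$, $\Psi_1(z)=\int_0^\infty \frac{s\,e^{-zs}}{1-e^{-s}}\,ds$ and their $\Psi_2$ analogues, isolating a family of technical polygamma estimates as separate lemmas; the constant $0.72$ is what those estimates afford when $\upalpha$ is small, while the competing bound $\theta<1/2$ takes over for larger $\upalpha$, exactly as in \cite{Barr-Cor}.
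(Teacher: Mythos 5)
Your proposal follows essentially the same route as the paper: the Barraquand--Corwin Fredholm-determinant formula for the Laplace transform, the "smoothed indicator" device converting that Laplace transform limit to the distributional limit, a steep-descent analysis around the double critical point $\theta$ where $h'(\theta)=h''(\theta)=0$ and $\tfrac12 h'''(\theta)=\sigma(\theta)^3$, local rescaling by $\sigma(\theta)^{-1}t^{-1/3}$ to produce the Airy kernel, and the recognition that the hard work is in proving the \emph{global} steep-descent inequality for the deformed contours via polygamma series/integral representations, which is precisely what produces the constant $0.72$. The only cosmetic differences from the paper are that the paper's global contours are a circle $C_\theta$ (for the Fredholm variable) and a vertical line $D_\theta$ (for the kernel variable), replaced only \emph{locally} by a wedge near $\theta$, rather than wedge-shaped steepest-descent rays globally, and there is a sign slip in your choice of $\zeta_t$ (it should satisfy $\log\zeta_t=I(x(\theta))t-y\sigma(\theta)t^{1/3}$ rather than its negative); neither affects the substance.
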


\medskip

It should be noted that  the range of target
points
$x(\theta)$ varies from $x(\theta)=1$ for $\theta=0$ to
$x(\theta)=\frac{\upalpha-\upbeta}{\upalpha+\upbeta}$, which
is the speed of the random walk, as $\theta\to\infty$. Since
the convergence in the above theorem is
values of $\theta$ not larger than $\min\{0.5, 0.72\upalpha\}$, and
$x(\theta)$ is decreasing with $\theta$, the range of target points
is far from the velocity $\frac{\upalpha-\upbeta}{\upalpha+\upbeta}$.

Also, Theorem \ref{one} shows in particular that the convergence to the GUE
Tracy-Widom distribution occurs for all $\theta\in (0,0.5)$ and $\upbeta>0$ as long as
$\upalpha\ge 0.7$.
The case $\upalpha=\upbeta=1$,  was proven for all $\theta\in (0,0.5)$
in \cite{Barr-Cor} and extended to $\upalpha=1$ and $\upbeta>0$ in \cite{K21}.  
Removing these restrictions to obtain  Theorem \ref{one} is  
technically  
challenging and requires sophisticated  estimates involving the polygamma functions. Our bounds on $\theta$ and $\upalpha$ are not optimal, and the methods
presented here
could give better estimates.

   Our second result shows that the intermediate disorder
  regime
  holds for the Beta random walk with parameters tending to $\infty$.
  It should be noted that since $\theta$ will be fixed, by (\ref{xtheta}),
  we will also have that $x(\theta)\to 1$.
  To state the theorem, we introduce the following  
function  
which will play a key role in the rate at which $\upalpha$ and  
$\upbeta$ 
can tend to infinity,  

$$
\mathfrak g(x,y):=\frac{y}{x (x+y)}\qquad {\rm for}\ x>0, y>0.  
$$
This function will appear from the difference between the trigamma 
functions 
$\Psi_1(\theta+\upalpha)-\Psi_1(\theta+\upalpha+\upbeta)$. 
We will assume that the parameters $\upalpha$ and $\upbeta$ depend on
the terminal time $t$,
and we will denote them by $(\upalpha_t,\upbeta_t)$: this means that
the random environment is given by space-time i.i.d. beta distributed random
variables of parameters $(\upalpha_t,\upbeta_t)$. We will assume that

\begin{equation}
  \label{as1}
\lim_{t\to\infty}\upalpha_t=\infty, \qquad \lim_{t\to\infty}\upbeta_t=\infty
\end{equation}
and that

\begin{equation}
  \label{as2}
  \lim_{t\to\infty}t \mathfrak g(\upalpha_t,\upbeta_t)=\infty.
\end{equation}
   In the following theorem, the quantities $x(\theta)$ and 
  $\sigma(\theta)$ will be time dependent since they will be evaluated 
  using time-dependent  parameters $(\upalpha_t,\upbeta_t)$ for 
  the beta random walk. It will turn out that under conditions
  (\ref{as1}) and (\ref{as2}), we will have that

  \begin{equation}
    \label{txt}
tx(\theta)\sim t-C(\theta)t\mathfrak g(\upalpha_t,\upbeta_t),
\end{equation}
where $C(\theta)>0$ is a constant depending on $\theta$, with the
correction term $t\mathfrak g(\upalpha_t,\upbeta_t)=o(t)$ but
$t\mathfrak g(\upalpha_t,\upbeta_t)\to\infty$ as $t\to\infty$. This
ensures that the entropy tends to $\infty$ in the sense that the number of trajectories involved in the probability
$P_\omega(t,x(\theta)t)$ will tend to $\infty$ as $t\to\infty$. Furthermore, we will also have under (\ref{as2}) that
$\sigma(\theta)\sim C'(\theta)\mathfrak g(\upalpha_t,\upbeta_t)$, for
some constant $C'(\theta)>0$, so that
$$
t^{1/3}\sigma(\theta)\to\infty.
$$

  \medskip
  \begin{theorem}
    \label{int-dis} Consider a family of Beta random walks of
    parameters $(\upalpha_t,\upbeta_t)$.  Assume that conditions
    (\ref{as1}) and (\ref{as2}) are satisfied. Then, for all
    $\theta\in (0,0.5)$ we have that 

  $$
\lim_{t\to\infty}\mathbb 
P_{\upalpha_t,\upbeta_t}\left(\frac{\log\left(P_\omega(t,x(\theta)t)\right)+I(x(\theta))t}{t^{1/3}\sigma(\theta)}\le 
  y 
  \right)=F_{GUE}(y). 
  $$
  \end{theorem}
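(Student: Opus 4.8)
The plan is to run the steep--descent analysis of \cite{Barr-Cor} directly for the Beta random walk, now with the parameters $(\upalpha_t,\upbeta_t)$ growing, the new ingredient being the large--argument asymptotics of the polygamma functions. Recall that for every $\upalpha,\upbeta>0$ the work \cite{Barr-Cor} provides an exact Fredholm determinant representation of a Laplace transform of $P_\omega(X_t\ge x)$,
$$
\mathbb E_{\upalpha,\upbeta}\!\left[e^{-u\,P_\omega(X_t\ge x)}\right]=\det\!\left(I+K_{u,t,x}\right),
$$
where $K_{u,t,x}$ is a double contour integral whose integrand is, up to elementary factors such as $1/(w-v)$, of the shape $e^{t(\Phi_t(w)-\Phi_t(v))}$ with $\Phi_t$ assembled from $\log\Gamma(w)$, $\log\Gamma(w+\upalpha_t)$, $\log\Gamma(w+\upalpha_t+\upbeta_t)$ and a term linear in $w$ carrying $x$ and $\log u$. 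First I would put $x=x(\theta)t$ and choose $u=u_t$ with $\log u_t=I(x(\theta))t-t^{1/3}\sigma(\theta)y+o(t^{1/3})$, so that $e^{-u_t P_\omega(t,x(\theta)t)}$ tends, as $t\to\infty$, to the indicator of the event $\{\log P_\omega(t,x(\theta)t)+I(x(\theta))t\le t^{1/3}\sigma(\theta)y\}$. By the standard passage from a Laplace transform to a distribution function --- which additionally uses $t^{1/3}\sigma(\theta)\to\infty$ and an a priori bound showing that $\log P_\omega(t,x(\theta)t)+I(x(\theta))t$ is of order $t^{1/3}$, the latter read off, uniformly in $t$, from one--sided versions of the estimates below --- it then suffices to prove $\det(I+K_{u_t,t,x(\theta)t})\to F_{GUE}(y)$.

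For the steep descent: with $x=x(\theta)t$ and $u=u_t$ as above, $\theta$ is a double critical point of $\Phi_t$, the equation $\Phi_t''(\theta)=0$ being a reformulation of (\ref{xtheta}) while $\Phi_t'(\theta)=0$ fixes $\log u_t$, and $\Phi_t'''(\theta)$ is a fixed nonzero multiple of the quantity $2\sigma(\theta)^3$ defined in (\ref{sigma}). One deforms the $w$-- and $v$--contours to steepest--descent paths through $\theta$, makes the local change of variables $w=\theta+(t\,\Phi_t'''(\theta))^{-1/3}\tilde w$ and likewise for $v$, and checks that in the rescaled variables the integrand converges to $\dfrac{e^{\tilde w^3/3-\xi\tilde w}}{e^{\tilde v^3/3-\eta\tilde v}}\dfrac{1}{\tilde w-\tilde v}$, the linear terms being supplied by the $t^{1/3}\sigma(\theta)y$ part of $\log u_t$; hence $K$ converges in trace norm on the rescaled contour to the Airy kernel $K_{Ai}$ and the Fredholm determinant to $F_{GUE}(y)$. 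The parts of the contours away from $\theta$ are shown to contribute negligibly, uniformly in $t$, from the strict monotonicity of $\mathrm{Re}\,\Phi_t$ along the steepest--descent paths.

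The genuinely new content is controlling the above as $\upalpha_t,\upbeta_t\to\infty$. Since $\theta\in(0,0.5)$ is fixed while $\theta+\upalpha_t$ and $\theta+\upalpha_t+\upbeta_t$ tend to infinity, I would feed the expansions $\Psi_1(z)=z^{-1}+\tfrac12 z^{-2}+O(z^{-3})$, $\Psi_2(z)=-z^{-2}+O(z^{-3})$, $\Psi_2'(z)=2z^{-3}+O(z^{-4})$, together with the bounded positive quantities $\Psi_1(\theta)$ and $-\Psi_2(\theta)$, into (\ref{xtheta}), into the formula for $I(x(\theta))$, and into (\ref{sigma}), obtaining
$$
x(\theta)\longrightarrow 1,\qquad \sigma(\theta)^3=\frac{-\Psi_2(\theta)}{2\,\Psi_1(\theta)}\,\mathfrak g(\upalpha_t,\upbeta_t)\,(1+o(1)),\qquad \Phi_t'''(\theta)\asymp\mathfrak g(\upalpha_t,\upbeta_t).
$$
Thus the centring scale $t^{1/3}\sigma(\theta)$ is of order $(t\,\mathfrak g(\upalpha_t,\upbeta_t))^{1/3}$ and the local window has width $(t\,\Phi_t'''(\theta))^{-1/3}\asymp(t\,\mathfrak g(\upalpha_t,\upbeta_t))^{-1/3}$. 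Assumption (\ref{as2}), $t\,\mathfrak g(\upalpha_t,\upbeta_t)\to\infty$, is precisely what forces this window to shrink to zero while making the centring scale diverge; it is the condition that places the model on the Tracy--Widom side of the Edwards--Wilkinson/Tracy--Widom crossover. Assumption (\ref{as1}) justifies the polygamma expansions, makes the quartic and higher Taylor remainders of $\Phi_t$ --- which over the window are $O\bigl((t\,\mathfrak g(\upalpha_t,\upbeta_t))^{-1/3}\bigr)$ --- negligible against the cubic term, and ensures that for $t$ large the $\theta$--$\upalpha$ restriction of Theorem \ref{one} is not active.

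The main obstacle is exactly this quantitative step: showing that the polygamma expansions of $\Phi_t$ and of its first few derivatives hold with errors that stay uniformly of smaller order than $\mathfrak g(\upalpha_t,\upbeta_t)$ across the \emph{shrinking} critical window, simultaneously as $t\to\infty$ and $\upalpha_t,\upbeta_t\to\infty$, and choosing the steepest--descent contours consistently in both limits so that the off--critical contributions remain exponentially small. This is the growing--parameter analogue of the delicate polygamma steep--descent estimates underlying Theorem \ref{one}, and it is here that (\ref{as2}) enters in its sharp form. A secondary, more routine, point is making the Laplace--transform--to--distribution--function passage uniform in $t$, for which one uses the a priori $t^{1/3}$--scale control of $\log P_\omega(t,x(\theta)t)+I(x(\theta))t$ noted above.
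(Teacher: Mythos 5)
Your proposal follows essentially the same route as the paper: start from the Barraquand--Corwin Fredholm determinant identity, pass from the Laplace transform to the distribution function via the standard monotone-family lemma, perform steep descent through the double critical point $\theta$ of $h$ with the rescaling $w=\theta+(\sigma(\theta)t^{1/3})^{-1}\tilde w$, and control the growing-parameter regime by tracking that the relevant polygamma differences are all of order $\mathfrak g(\upalpha_t,\upbeta_t)$, so that $\sigma(\theta)^3\asymp\mathfrak g(\upalpha_t,\upbeta_t)$ and condition (\ref{as2}) is exactly $t\sigma(\theta)^3\to\infty$, the effective time scale replacing $t$ in the fixed-parameter argument. You correctly identify the asymptotics $\sigma(\theta)^3=\frac{-\Psi_2(\theta)}{2\Psi_1(\theta)}\mathfrak g(\upalpha_t,\upbeta_t)(1+o(1))$ and that (\ref{as1}) makes the $\theta<0.72\,\upalpha$ restriction from Theorem \ref{one} vacuous for large $t$; these are precisely the contents of parts (iii)--(v) of Corollary \ref{corcor} in the paper. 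The one small deviation is that you invoke an a priori $t^{1/3}$-scale bound on $\log P_\omega(t,x(\theta)t)+I(x(\theta))t$ for the Laplace-transform-to-distribution step; the paper dispenses with this by using the abstract Lemma \ref{lbc}, which only requires $t^{1/3}\sigma(\theta)\to\infty$ (a consequence of (\ref{as2})) and continuity of the limiting distribution, so no such a priori control is needed.
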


  \medskip

  \noindent 
In the above theorem, the target point $x(\theta)$ is approaching $1$
as $t\to\infty$, through its dependence on time from $\upalpha_t$ and
$\upbeta_t$ (see (\ref{txt})). If we want to achieve a result where
$\upalpha_t\to\infty$, $\upbeta_t\to\infty$, but the target point $x(\theta)$
is fixed, we would need to choose also the parameter $\theta$ as a
function of time, so that $\theta_t\to\infty$ as $t\to\infty$. For the
moment, this regime is out of the reach of the methods used in this
article because the deformations of contours used are justified only
for $\theta<\min\{1/2,\upalpha+\upbeta\}$.

In the particular case in which
  $\upalpha_t=c_1t^r$, $\upbeta_t=c_2t^s$, for some constants
 $c_1>0$ and $c_2>0$, and the following 
  condition is satisfied, 

  \begin{equation}
    \label{gcond}
    r+\max(r-s,0)<1,
  \end{equation}
  then assumptions (\ref{as1}) and (\ref{as2}) are satisfied. If also
  $r=s$, the law of the Beta distribution with parameter
  $(\upalpha_t,\upbeta_t)$
  converges to the atom at $c_1/(c_1+c_2)$.

  \noindent The proof of Theorems \ref{one} and \ref{int-dis} is based on an exact 
  formula for the Laplace transform of the probabilities 
  $P_\omega(t,x)$
   from where an asymptotic analysis through 
  steep-descent methods of integrals on complex contours is carried 
  out. The asymptotic analysis approach is within the spririt of what
  is  carried out 
  in the context of the log-gamma polymer in \cite{BCR13}, the 
  O'Connell-Yor semi-discrete polymer and the continuum random polymer 
  in \cite{BCF14} and 
  for the Beta random walk (or Beta polymer) in \cite{Barr-Cor} and 
  \cite{K21}
  for $\upalpha=1$ and $\upbeta>0$. For the proof of Theorem
  \ref{one},
  we  have to do some challenging computations involving the polygamma 
  functions, leading to the required steep-descent estimates
  and involving the following complex function

\begin{equation}
  \label{hz}
h(z):=I(x(\theta))z+\frac{1-x(\theta)}{2}
\log\left(\frac{\Gamma(\upalpha+z)}{\Gamma(z)}\right) 
+\frac{1+x(\theta)}{2}\log\left(\frac{\Gamma(\upalpha+z)}{\Gamma(\upalpha 
    +\upbeta+z)}\right), 
\end{equation}
which turns out to have a critical point at $z=\theta$. We then do
a steep-descent analysis near this critical point, so that the choice
of
contours near this point will be important. Here we choose them
in a way similar to the choice of \cite{K21}. It appears that this
approach is robust enough to extend the validity of the convergence
to the GUE Tracy-Widom of the second order correction of the large
deviation probabilities to the range of parameters stated in Theorem
\ref{one}. Nevertheless, the first limitation of this approach
 is the impossibility of deforming contours to achieve
 a circular contour centered at the
origin of radius larger than $1/2$, restricting the range
of parameters to $\theta\in (0,1/2)$.
The second limitation  shows up
in the necessity of proving two key inequalities involving $h$ and
its derivatives, which ensure the steep-descent properties with the
chosen contours, which actually turn out to be false for some
values of $\upalpha>0$, $\upbeta>0$ and $\theta\in (0,0.5)$. At any
rate, we expect that an approach which enables us to go to values of
$\theta\ge 0.5$, might be possible with
 a clever choice of contours,  and could give an extension of Theorem
\ref{one}
to all $\theta>0$, $\upalpha>0$ and $\upbeta>0$.

  For Theorem \ref{int-dis}, we have to keep track of the dependence of
  several estimates on $\upalpha_t$ and $\upbeta_t$, and essentially
  the time scale changes from $t$ to $\sigma t^3$, but now $\sigma$
  depends
  on time through $\upalpha_t$ and $\upbeta_t$.
  
  Our final result extends Theorem \ref{int-dis} to random walks in
  space time i.i.d. environments which are close to the Beta random
  walk.
  To state it, we need to define the concept of matching moments 
  between two families of environments, one of which has Beta 
  distributions. 
  Given an environment $\omega$, define $\xi=(\xi(x,t))_{x\in\mathbb
    Z,t\ge 0}$ by
$\xi_+(x,t)=-\log \omega_1(x,t)$ and
$\xi_-(x,t) =-\log\omega_{-1}(x,t)$, so that
\[
e^{-\xi_+(x,t)} = \omega_1(x,t) \text{\quad and\quad}e^{-\xi_-(x,t)} = \omega_{-1}(x,t).
\]
We will also use the multi-index notation for $t=(t_1,t_2)\in\mathbb R^2$,

$$
|\alpha|=\alpha_1+\alpha_2, \qquad t^\alpha=t_1^{\alpha_1}t_2^{\alpha_2}.
$$

\medskip

\begin{definition}[Moment matching condition]
  Consider a family of Beta probability measures 
$(\mathbb P_{\upalpha_t,\upbeta_t})_{t\ge 0}$
such that (\ref{as1}), (\ref{as2}) are satisfied. Assume also that 

$$
M_1:=\lim_{t\to\infty}\mathbb E_{\upalpha_t,\upbeta_t}[\xi_+(0,t)]
=-\lim_{t\to\infty}\left(\Psi(\upalpha_t)-\Psi(\upalpha_t+\upbeta_t)\right) 
$$
and 

$$
M_2:=\lim_{t\to\infty}\mathbb E_{\upalpha_t,\upbeta_t}[\xi_-(0,t)]
=-\lim_{t\to\infty}\left(\Psi(\upbeta_t)-\Psi(\upalpha_t+\upbeta_t)\right) 
$$
exist. Define

$$
\bar\xi_+(x,t)=\bar\xi_1(x,t):=\xi_+(x,t)-M_1\quad{\rm and}\quad
\bar\xi_-(x,t)=\bar \xi_2(x,t):=\xi_-(x,t)-M_2.
$$
Let $f(t):[0,\infty)\to [0,\infty)$. Given a family of probability measures
$(\mathbb P_t)_{t\ge 0}$ defined on the environmental space
$\Omega$ (with corresponding expectations $(\mathbb E_t)_{t\ge 0}$),
we say that it {\it matches moments
up to order $k$ at rate $f$} with the family $(\mathbb
P_{\upalpha_t,\upbeta_t})_{t\ge 0}$,
if we have that
for all $x\in\mathbb Z$ and $t\ge 1$,
\[
\big\vert\mathbb E_t[\bar\xi^\alpha (x,t)]-\mathbb
E_{\upalpha_t,\upbeta_t}[\bar\xi^\alpha (x,t)] \big\vert \le f(t),
\qquad\text{for } |\alpha|\le k,
\]
and
\[
  \big\vert\mathbb E_t[\bar\xi^\alpha(x,t)]\big\vert \le f(t),
  \qquad\text{for } |\alpha|= k.
\]
\end{definition}

\medskip
We can now state the third result of this article.
  Again, the quantities $x(\theta)$ and 
  $\sigma(\theta)$ will be time dependent through the time-dependent  parameters $(\upalpha_t,\upbeta_t)$ for 
  the beta random walk.
  To state this theorem, we define for time $t\ge 0$, the set 

$$
D_t:=\{z\in\mathbb Z^2: |z|_1\le t\}. 
$$
Note that the convex hull of the range of the Dirichlet random walk at 
time $t$ in $\mathbb Z^2$ is 
$D_t$. Define its boundary 

$$
\partial D_t:=\{z\in\mathbb Z^2: |z|_1=t\}. 
$$
We then can define $\partial D_t^{+,+}:=\{z\in \partial D_t: z_1\ge 
0,z_2\ge 0\}$, $\partial D_t^{+,-}:=\{z\in \partial D_t: z_1\ge 
0,z_2\le 0\}$, $\partial D_t^{-,+}:=\{z\in \partial D_t: z_1\le 
0,z_2\ge 0\}$ and $\partial D_t^{-,-}:=\{z\in \partial D_t: z_1\le 
0,z_2\le 0\}$. We have that $\partial D_t=\partial D_t^{+,+}\cup 
\partial D_t^{+,-}\cup\partial D_t^{-,+}\cup\partial D_t^{-,-}$. 
For $t\ge 0$ and $y\in (0,1)$ consider the set 

$$
A_{t,y}:=\{z\in\mathbb Z^2: |z|_1=t, z_1\ge (t+y)/2, z_2\ge 0\}\subset\partial D^{+,+}_t. 
$$
See Figure \ref{figure0} to visualize the sets $D_t$, $\partial D_t$, 
$\partial D_t^{+,+}$, $\partial D_t^{+,-}$, $\partial D_t^{-,+}$, 
$\partial D_t^{-,-}$ and $A_{t,x(\theta)t}$. 
We will adopt the notation 

\begin{equation}
  \label{adopt}
P_{0,\omega}(t,y)=P_{0,\omega}(X_t\in A_{t,y}). 
\end{equation}
\medskip

\begin{theorem}
  \label{moments} Consider a family 
  of parameters  $(\upalpha_t,\upbeta_t)$ that 
satisfy (\ref{as1}) and (\ref{as2}). Let $\theta\in (0,0.5)$. Let 
$(\mathbb P_t)_{t\ge 0}$ be a family of environmental laws 
which  matches moments up 
to order $k$ at rate $\upalpha_t^{-\left\lceil\frac{k}{2}\right\rceil}$ with  $(\mathbb 
P_{\upalpha_t,\upbeta_t})_{t\ge 0}$ and such that

\begin{equation}
  \label{tta}
\lim_{t\to\infty}\frac{t^2\upalpha^{-\left\lceil\frac{k}{2}\right\rceil}_t}{\sigma(\theta)t^{1/3}}=0.
  \end{equation}
  Then

  $$
\lim_{t\to\infty}\mathbb 
P_t \left(\frac{\log\left(P_{0,\omega}(t,x(\theta)t)\right)+I(x(\theta))t}{t^{1/3}\sigma(\theta)}\le 
  y 
  \right)=F_{GUE}(y). 
  $$

  \end{theorem}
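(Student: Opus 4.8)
The plan is to reduce the matching-moments statement to Theorem \ref{int-dis} by a perturbative comparison of Laplace transforms, following the strategy of Krishnan--Quastel \cite{KQ18}. The starting point is the exact formula for the Laplace transform of $P_{0,\omega}(t,x)$ (equivalently of $E_\omega[e^{sX_t}]$ type quantities) that underlies the proofs of Theorems \ref{one} and \ref{int-dis}; for the Beta family $(\mathbb P_{\upalpha_t,\upbeta_t})_{t\ge 0}$ this formula admits the steep-descent analysis that produces the GUE Tracy--Widom limit with centering $I(x(\theta))t$ and scaling $\sigma(\theta)t^{1/3}$. For a general environment $\mathbb P_t$ there is no closed formula, but one can still write $P_{0,\omega}(t,x)$ as a sum over nearest-neighbour paths $\pi$ of length $t$ from $0$ of products $\prod_{s} \omega_{\pm}(\pi_s,s) = \exp(-\sum_s \xi_{\pm}(\pi_s,s))$, and hence express a suitable exponential functional of $\log P_{0,\omega}(t,x)$ in terms of the joint law of the increments $\xi_{\pm}(x,s)$ along paths. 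After centering each increment by $M_1$ or $M_2$ (the deterministic shift $\sum_s$ of these constants along any length-$t$ path is path-independent and contributes exactly to $I(x(\theta))t$ in the limit, using the definitions of $M_1,M_2$ and the identities for $I$ in terms of $\Psi$), the remaining randomness enters through the centered variables $\bar\xi^\alpha(x,t)$.

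The core step is a Lindeberg-type (telescoping) replacement: I would interpolate between $\mathbb P_t$ and $\mathbb P_{\upalpha_t,\upbeta_t}$ by swapping the environment one space-time site at a time, and Taylor-expand the relevant functional of the path sum to order $k-1$ in the single swapped coordinate $(\xi_+(x,s),\xi_-(x,s))$. The terms of order $\le k-1$ in the Taylor expansion depend on the law of $(\xi_+,\xi_-)$ only through the moments $\mathbb E[\bar\xi^\alpha(x,t)]$ with $|\alpha|\le k-1$, which agree between the two families up to the error $f(t)=\upalpha_t^{-\lceil k/2\rceil}$; the order-$k$ remainder is controlled by the moment bound $|\mathbb E_t[\bar\xi^\alpha(x,t)]|\le f(t)$ for $|\alpha|=k$ together with the corresponding bound for the Beta family (which is where one needs sharp control of the centered moments of $\xi_{\pm}$ under $\mathbb P_{\upalpha_t,\upbeta_t}$: the $k$-th centered moment of $-\log B$ for $B\sim\mathrm{Beta}(\upalpha_t,\upbeta_t)$ is of order $\upalpha_t^{-\lceil k/2\rceil}$, reflecting that $\mathrm{Var}(\xi_+)\sim \Psi_1(\upalpha_t)-\Psi_1(\upalpha_t+\upbeta_t)\sim 1/\upalpha_t$). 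Summing the single-site errors over the at most $O(t^2)$ relevant space-time sites gives a total error of size $O(t^2\,\upalpha_t^{-\lceil k/2\rceil})$, which after dividing by the natural scale $\sigma(\theta)t^{1/3}$ tends to $0$ by hypothesis (\ref{tta}). One must also check that the derivatives of the functional with respect to a single $\xi_{\pm}(x,s)$ are bounded uniformly enough (using that the functional is, up to the exponential, a sum of path weights that are probabilities, hence bounded by $1$), so that the expansion is legitimate and the remainders are genuinely $O(f(t))$ per site.

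Having shown that the exponential functional (e.g.\ a Laplace transform in the Tracy--Widom variable $y$, or directly $\mathbb E_t[g((\log P_{0,\omega}(t,x(\theta)t)+I(x(\theta))t)/(\sigma(\theta)t^{1/3}))]$ for bounded continuous $g$) differs under $\mathbb P_t$ and $\mathbb P_{\upalpha_t,\upbeta_t}$ by $o(1)$, I would invoke Theorem \ref{int-dis} to identify the $\mathbb P_{\upalpha_t,\upbeta_t}$-limit as $F_{GUE}$, and conclude convergence in distribution under $\mathbb P_t$ by the usual portmanteau/moment-determinacy argument (the GUE Tracy--Widom law is determined by its one-dimensional distribution function, and tightness is inherited from the comparison). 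The main obstacle I anticipate is the simultaneous control, uniform in $t$, of: (i) the centered moments up to order $k$ of $\xi_{\pm}$ under the Beta law with parameters $\upalpha_t,\upbeta_t\to\infty$ — i.e.\ verifying the $\upalpha_t^{-\lceil k/2\rceil}$ decay rate via polygamma asymptotics — and (ii) the stability of the steep-descent estimates of Theorem \ref{int-dis} under the one-site perturbations, so that the $O(t^2)$-many error terms really do add up to something negligible against $\sigma(\theta)t^{1/3}$ rather than merely against $t$. Condition (\ref{as2}) (that $t\,\mathfrak g(\upalpha_t,\upbeta_t)\to\infty$) and (\ref{tta}) are precisely what make these two competing requirements compatible, and reconciling them is the technical heart of the argument.
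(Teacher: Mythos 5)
Your proposal takes essentially the same route as the paper's proof: a Lindeberg-type one-site replacement of the environment, Taylor expansion of $\varphi(h_t)$ with $\varphi\in C^k(\mathbb R)$ in the swapped site's centered logarithmic weights $(\bar\xi_+,\bar\xi_-)$ to order $k-1$ with an order-$k$ remainder, matching the low-order terms via the moment assumption, summing the per-site error of size $O(\upalpha_t^{-\lceil k/2\rceil}/(\sigma(\theta)t^{1/3}))$ over the $O(t^2)$ accessible space-time vertices, and then invoking Theorem \ref{int-dis} to identify the Beta-side limit as $F_{GUE}$. One small clarification: no ``stability of the steep-descent estimates under one-site perturbations'' is needed or used --- the steep-descent analysis enters only through Theorem \ref{int-dis} for the exactly solvable Beta walk, while the comparison with $\mathbb P_t$ is a purely combinatorial Taylor/moment-matching estimate, with the crucial per-site factor $1/(\sigma(\theta)t^{1/3})$ produced by bounding the derivatives of $h(y_1,y_2)$ via Fa\`a di Bruno applied to the explicit one-site decomposition $P_\omega(y_1,y_2)=P_\omega^c+e^{y_1+M_1}P_\omega^++e^{y_2+M_2}P_\omega^-$.
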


\medskip

\noindent Our approach to prove Theorem \ref{moments} is to
express the weak convergence in terms of the convergence of
a large enough family of expectations involving smooth enough
functions,
and then making a Taylor expansion, and apply Theorem \ref{int-dis}.
This is similar to what is presented in \cite{KQ18}, although here we
have
to deal with perturbations which involve two parameters instead of
one.

Theorem \ref{moments} has the following  corollary for
random walks in Dirichlet random environment in dimension $d=2$.
  As in Theorem \ref{moments}, the quantities $x(\theta)$ and 
  $\sigma(\theta)$ will be time dependent through
  time-dependent parameters $(\upalpha_t,\upbeta_t)$ and
  $P_{0,\omega}(t,x(\theta)t)$
  is defined in (\ref{adopt}). 

  \medskip
\begin{corollary}
  \label{corollary1} Consider a family of random walks
  in Dirichlet environment on $\mathbb Z^2$ of parameters
  $(\upalpha_t)_{t\ge 0}$ with
  $\upalpha_t=(\upalpha_{t,i})_{i\in\{1,\ldots, 4\}}$
  and $\upalpha_{t,1}=\upalpha_{t,2}=t^r$,
  $\upalpha_{t,3}\le t^{-p}$ and $\upalpha_{t,4}\le t^{-p}$ for
  $t\ge 1$,
  for some $r\in (0,1)$ and

\begin{equation}
  \label{as3}
p\ge r \left\lceil\frac{5}{3r}-\frac{1}{3}\right\rceil -r. 
\end{equation}
Then for all $\theta\in (0,0.5)$ we have that

    $$
\lim_{t\to\infty}\mathbb 
P_{\upalpha_t}\left(\frac{\log\left(P_{0,\omega}(t,x(\theta)t)\right)+I(x(\theta))t}{t^{1/3}\sigma(\theta)}\le 
  y 
  \right)=F_{GUE}(y). 
  $$

  \end{corollary}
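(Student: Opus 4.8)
The plan is to realise $P_{0,\omega}(t,x(\theta)t)$, up to a negligible correction, as the transition probability of a space--time i.i.d.\ random walk on $\mathbb Z$ that is a moment perturbation of the Beta random walk of parameters $(\upalpha_t,\upbeta_t)=(t^r,t^r)$, and then to invoke Theorem~\ref{moments}; these parameters satisfy \eqref{as1}--\eqref{as2}, since \eqref{gcond} holds with $r=s<1$. As $A_{t,x(\theta)}\subset\{z:|z|_1=t\}$ and $x(\theta)=x_t(\theta)\in(0,1)$ for $\theta\in(0,0.5)$, every length--$t$ trajectory reaching $A_{t,x(\theta)}$ has $|X_s|_1=s$ for all $s\le t$, i.e.\ is radially outward; since the target lies in the cone $\{z_1>|z_2|\}$ and a radially outward trajectory cannot return from $\{z_1<0\}$, such a trajectory is a directed up--right path (steps $+e_1,+e_2$ only), except possibly for one excursion where it leaves the $x_1$--axis downward by a $-e_2$ step and thereafter moves in $\{z_2<0\}$ using only $+e_1,-e_2$ steps. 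Write $U$ for the total weight of the up--right paths ending in $A_{t,x(\theta)}$ and $W$ for that of the rest, so $P_{0,\omega}(t,x(\theta)t)=U+W$. The standard bijection between directed up--right paths on the quarter--plane and nearest--neighbour $\pm1$ paths on $\mathbb Z$ indexed by time identifies $U$ with $P_{0,\omega'}(X_t\ge x(\theta)t)$ for the space--time i.i.d.\ environment $\omega'$ on $\mathbb Z$ with $\omega'_+(\cdot)=\omega_{e_1}(\cdot)$ and $\omega'_-(\cdot)=\omega_{e_2}(\cdot)$; by the aggregation property of the Dirichlet distribution $(\omega_{e_1}(z),\omega_{e_2}(z))$ has law $\mathrm{Dir}(t^r,t^r,\upalpha_{t,3}+\upalpha_{t,4})$, independent over $z\in\mathbb Z^2$, hence over space--time. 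Denote by $(\mathbb P_t)_{t\ge 0}$ this environment family.

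I would then check that $(\mathbb P_t)$ matches moments with $(\mathbb P_{t^r,t^r})$. Since $\upalpha_t=\upbeta_t$, $\mathrm{Beta}(t^r,t^r)$ is invariant under $B\mapsto 1-B$: writing $B=\tfrac12(1+Y)$ with $Y$ symmetric and of order $t^{-r/2}$, one has $\bar\xi_1=-\log(1+Y)=-Y+O(Y^2)$ and $\bar\xi_2=-\log(1-Y)=Y+O(Y^2)$, so the leading term of $\mathbb E_{t^r,t^r}[\bar\xi^\alpha]$ cancels for odd $|\alpha|$ and $\mathbb E_{t^r,t^r}[\bar\xi^\alpha]=O(t^{-r\lceil|\alpha|/2\rceil})$. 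Using the Dirichlet moment identity $\mathbb E[\prod_i\omega_i^{s_i}]=\tfrac{\Gamma(A)}{\Gamma(A+\sum_i s_i)}\prod_i\tfrac{\Gamma(a_i+s_i)}{\Gamma(a_i)}$ with $A=\sum_i a_i$, every joint $\xi$--moment of either family is a polynomial in polygamma functions evaluated at $t^r$ and at $2t^r$ (resp.\ $2t^r+\delta_t$, $\delta_t:=\upalpha_{t,3}+\upalpha_{t,4}\in(0,2t^{-p}]$), and $\Psi_j(2t^r+\delta_t)-\Psi_j(2t^r)=O(\delta_t\,t^{-r(j+1)})$ gives
$$\bigl|\mathbb E_t[\bar\xi^\alpha]-\mathbb E_{t^r,t^r}[\bar\xi^\alpha]\bigr|=O(t^{-p-r}),\qquad \mathbb E_t[\bar\xi^\alpha]=O\bigl(t^{-r\lceil|\alpha|/2\rceil}\bigr)+O(t^{-p-r}).$$
Taking the matching order $k=k(r)$ so that $\lceil k/2\rceil$ equals the integer appearing in \eqref{as3}, and using that $\upalpha_t=\upbeta_t\to\infty$ forces $\sigma(\theta)=\sigma_t(\theta)\asymp t^{-r/3}$ (from the asymptotics of \eqref{sigma}), one verifies that \eqref{as3} is exactly the condition that makes the matching rate $\upalpha_t^{-\lceil k/2\rceil}$ achievable and \eqref{tta} valid. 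Theorem~\ref{moments} then yields, for all $y$,
$$\lim_{t\to\infty}\mathbb P_{\upalpha_t}\!\left(\frac{\log U+I(x(\theta))t}{t^{1/3}\sigma(\theta)}\le y\right)=F_{GUE}(y).$$

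Finally I would absorb $W$. A path contributing to $W$ leaves the $x_1$--axis after some $j<t$ steps by one $-e_2$ step, of annealed mean $\bar\omega_{-e_2}\le\tfrac12 t^{-p-r}$, and then performs $t-j-1$ outward steps in $\{z_2<0\}$, each of weight $\le\bar\omega_{e_1}\le\tfrac12$; summing over $j$ and over the subsequent $\pm$ choices gives $\mathbb E_{\upalpha_t}[W]\le C\,t\,\bar\omega_{-e_2}\,2^{-t}\le C'\,t^{1-p-r}2^{-t}$. On the other hand $x(\theta)\to1$ gives, via \eqref{xtheta}, $1-x(\theta)\asymp t^{-r}$, hence $\log2-I(x(\theta))\asymp t^{-r}\log t$ and $t(\log2-I(x(\theta)))\to\infty$; since $t^{1/3}\sigma(\theta)\asymp t^{(1-r)/3}=o\bigl((\log2-I(x(\theta)))t\bigr)$, applying Markov's inequality to $W$ together with the lower--tail bound for $U$ from the previous display gives $W/U\to0$ in $\mathbb P_{\upalpha_t}$--probability. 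Thus $\log P_{0,\omega}(t,x(\theta)t)=\log U+o(t^{1/3})$ in probability, which is the claim.

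The main difficulty lies in the reduction. First, one must argue that discarding the non--directed excursions alters $\log P_{0,\omega}(t,x(\theta)t)$ by only $o(t^{1/3})$: because the fluctuation window $\sigma(\theta)t^{1/3}\asymp t^{(1-r)/3}$ degenerates as $t\to\infty$, this requires combining the exponential smallness of $\mathbb E[W]$ with a genuine lower--tail bound for $U$ (supplied by Theorem~\ref{moments} itself) and controlling that the gap $\log2-I(x(\theta))$, while tending to $0$, stays much larger than $t^{-1}$. Second, one must show that the threshold \eqref{as3} on $p$ is precisely what renders the $O(t^{-p-r})$ moment--matching rate and the hypothesis \eqref{tta} simultaneously satisfiable for a suitable order $k(r)$; this is a careful computation with the large--argument asymptotics of the polygamma functions that enter $I$, $\sigma$ and the Dirichlet $\xi$--moments.
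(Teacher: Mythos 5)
Your argument follows the same broad strategy as the paper's own proof --- moment-match the Dirichlet-marginal environment against $\mathrm{Beta}(t^r,t^r)$ and invoke Theorem~\ref{moments} --- but it is materially more complete. The paper goes directly to the moment comparison and never explains why the two-dimensional quantity $P_{0,\omega}(X_t\in A_{t,x(\theta)})$, in a \emph{time-independent} Dirichlet environment on $\mathbb Z^2$, can be treated as the tail probability of a one-dimensional \emph{space-time} i.i.d.\ walk (the setting of Theorem~\ref{moments}), nor does it mention the weight of trajectories hitting the lower branch $\{z_2<0\}$ of $A_{t,x(\theta)}$. Your reduction fills both gaps: any trajectory with $|X_t|_1=t$ has $|X_s|_1=s$ for all $s\le t$, hence is self-avoiding (so the static environment along it is effectively space-time i.i.d.) and is either NE-directed or SE-directed; by Dirichlet aggregation the NE weight $U$ equals $P_{0,\omega'}(Y_t\ge x(\theta)t)$ for a $\mathbb Z$-valued walk with $(\omega'_+,\omega'_-)=(\omega_{e_1},\omega_{e_2})\sim\mathrm{Dir}(t^r,t^r,\upalpha_{t,3}+\upalpha_{t,4})$, while the SE remainder $W$ must be shown negligible. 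Your control of $W$ works: each SE path with $m\ge1$ steps in direction $-e_2$ has annealed weight at most $\mathbb E[\omega_{e_1}]^{t-m}\mathbb E[\omega_{-e_2}]^m$ with $\mathbb E[\omega_{-e_2}]\le\tfrac12 t^{-p-r}$, so $\mathbb E_{\upalpha_t}[W]\le(\mathbb E[\omega_{e_1}]+\mathbb E[\omega_{-e_2}])^t-\mathbb E[\omega_{e_1}]^t\le C\,t^{1-p-r}2^{-t}$, while the large-$\upalpha$ expansion of the digamma formula for $I$ gives $\log 2-I(x(\theta))\asymp t^{-r}\log t$, whence $t(\log2-I)\asymp t^{1-r}\log t\gg t^{1/3}\sigma(\theta)\asymp t^{(1-r)/3}$; Markov together with the lower tail of $U$ from Theorem~\ref{moments} then gives $\log(1+W/U)=o(t^{1/3}\sigma(\theta))$ in probability. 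Two smaller remarks: your derivation of the matching rate $O(t^{-p-r})$ and of $\mathbb E_{t^r,t^r}[\bar\xi^\alpha]=O(t^{-r\lceil|\alpha|/2\rceil})$ through the explicit Dirichlet moment formula and the symmetry $B\leftrightarrow 1-B$ is an equivalent alternative to the recursion of Lemma~\ref{logmlemma} and Corollary~\ref{ccc}; and you use the correct scaling $\sigma(\theta)\asymp t^{-r/3}$, coming from $\sigma^3=\tfrac12 h^{(3)}(\theta)\asymp\mathfrak g(\upalpha_t,\upbeta_t)\asymp t^{-r}$, which is exactly what makes the paper's choice $\lceil k/2\rceil=\lceil\tfrac{5}{3r}-\tfrac13\rceil$ compatible with~\eqref{tta}.
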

  \medskip

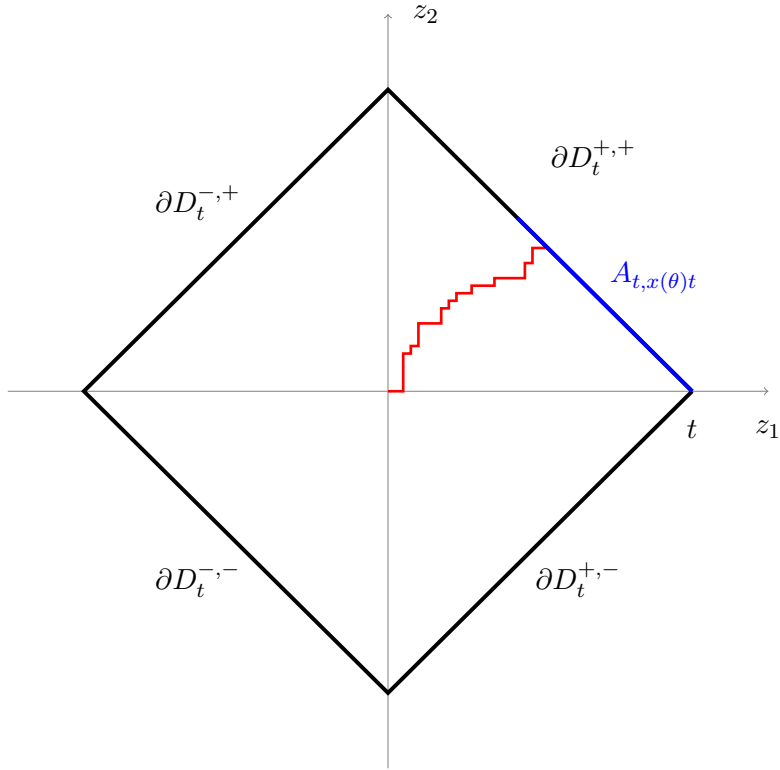
\begin{figure}[!h]

  \begin{tikzpicture}

\draw[help lines,->] (-5,0) -- (5,0) coordinate (xaxis);
\draw[help lines,->] (0,-5) -- (0,5) coordinate (yaxis);



\path[draw, red, line width=1pt,postaction=decorate]
(0,0)--(0.1,0)--(0.2,0)--(0.2,0.1)--(0.2,0.2)--(0.2,0.3)--(0.2,0.4)--(0.2,0.5)--(0.3,0.5)--(0.3,0.6)--(0.4,0.6)--(0.4,0.7)--(0.4,0.8)--(0.4,0.9)--(0.5,0.9)--(0.6,0.9)--(0.7,0.9)--(0.7,1.0)--(0.7,1.1)--(0.8,1.1)--(0.8,1.2)--(0.9,1.2)--(0.9,1.3)--(1.0,1.3)--(1.1,1.3)--(1.1,1.4)--(1.2,1.4)--(1.3,1.4)--(1.4,1.4)--(1.4,1.5)--(1.5,1.5)--(1.6,1.5)--(1.7,1.5)--(1.8,1.5)--(1.8,1.6)--(1.8,1.7)--(1.9,1.7)--(1.9,1.8)--(1.9,1.9)--(2,1.9)--(2.1,1.9); 

\path[draw,line width=1.5pt,postaction=decorate]
(4,0) -- (0,4)--(-4,0)--(0,-4)--(4,0);

\path[draw,blue, line width=1.5pt,postaction=decorate]
(1.7,2.3)--(4,0);

\node at (4,-0.5) {$t$};
\node at (5,-0.5) {$z_1$};
\node at (0.5,5) {$z_2$};
\node at (2.7,3.1) {$\partial D_t^{+,+}$};
\node[blue] at (3.5,1.5) {$A_{t,x(\theta) t}$};
\node at (-2.5,-2.5) {$\partial D_t^{-,-}$};
\node at (2.5,-2.5) {$\partial D_t^{+,-}$};
\node at (-2.5,2.5) {$\partial D_t^{-,+}$};
\end{tikzpicture}

\caption{The set $D_t$ and its boundary $\partial D_t$ divided into
  the four quadrants, the large deviation event $A_{t,x(\theta)t}$ (in
  blue) of Theorem \ref{moments} and Corollary
  \ref{corollary1} and a typical directed trajectory (in red) of a random
  walk starting from $0$. In the case of Corollary \ref{corollary1},
  the length of the blue segment $A_{t,x(\theta)t}$ grows like
  $O(t^{1-r})$ as $t\to\infty$.}
\label{figure0}
\end{figure}

Corollary \ref{corollary1} is a particular case of perturbations of random walks
in i.i.d. random environment in dimension $d=2$ which are
perturbations
of the Beta random walk (see Figure \ref{figure0}), and can be derived
from Theorem \ref{moments} comparing the law of the Dirichlet random
walk (for directed trajectories) with the law of a Beta random walk
with specific parameters
$(\upalpha_t,\upbeta_t)$ (see Lemma \ref{parametersbeta} for the choice of these parameters).  Its proof requires obtaining good enough
estimates
for the moments of Dirichlet random variables.
It should be emphasized that  even though 
the result concerns a random walk in a static i.i.d. environment, the 
random walk is  not allowed to backtrack simply because the event
$\{X_t\in A_{t,x(\theta)t}\}$ contains only directed trajectories. On the other hand, if
$\upalpha_{t,3}$ and $\upalpha_{t,4}$ decay to $0$ exponentially fast, it would be
possible to state a variation of this corollary where the large
deviation event includes trajectories which could backtrack . Nevertheless,
in a sense, this situation would correspond to a case in which these
trajectories
do not contribute at all to the large deviations, nor to its
correction. It might  be
possible to extend the corollary to large deviation events which
genuinely allow backtracking to some extent, but this would be
a task for a separate article and it would require different
methods
besides the moment matching technique.

In Section \ref{sthree}, we present the proof of Theorems \ref{one}
and \ref{int-dis} in a unified way. In Section \ref{sfour}, we will prove
Theorem
\ref{moments}. Throughout the rest of this article we will
adopt the notation $C_1,C_2,\ldots,$ and $c_1,c_2,\ldots,$
to denote constants, which in general might depend on
some of the parameters involved. In particular, they might
depend on $\upalpha$ and $\upbeta$, and it will be important
to keep track of this dependence in order to prove Theorem \ref{int-dis}.

  \medskip
  \section{Proof of Theorems \ref{one} and \ref{int-dis}}
  \label{sthree}
To prove Theorems \ref{one} and \ref{int-dis} we will
do a careful steep-descent analysis, whose starting point is
the connection between the Laplace transform and the
distribution functions. This connection is explained in Section
\ref{sublaplace}. In Section \ref{subdet}, we present an
exact determinantal formula derived in \cite{Barr-Cor},
which will then lend itself to do an asymptotic analysis.
In Section 
\ref{he}
we explain the general strategy to do the aymptotic analysis of the 
determinantal formula to prove Theorems \ref{one} and \ref{int-dis},
summarizing the main step as Proposition \ref{p1}.
In Section \ref{sde}, several estimates will be proven showing that
the contours involved have the steep-descent property,
starting from Lemmas \ref{fo} and \ref{im}. This will be
applied
in Section \ref{sde} to obtain the necessary bound on the integrands
over these complex contours. In Section \ref{sp1},
Proposition
\ref{p1} will be proven. Finally in Section \ref{lemmasfo}, Lemmas \ref{fo}
and \ref{im} are proven.

\subsection{Connection to the Laplace transform}
\label{sublaplace}
The main conclusion of this section will be the following lemma
showing how to obtain the limiting distribution function of the
fluctuations of $\log P_\omega(t,x(\theta)t)$ through its Laplace
transform. Part $(i)$ of the Lemma is  stated in \cite{Barr-Cor},
although it is a standard method already used in similar contexts
by other authors (se for example \cite{BCR13} or \cite{BCF14}).

For $y\in \mathbb R$ define

\begin{equation}
  \label{uy}
u(y):=-e^{t I(x(\theta))-t^{1/3}\sigma(\theta)y}. 
\end{equation}
Our question is under what assumptions on the parameters
$(\upalpha_t,\upbeta_t)_{t\ge 0}$  is
the following equation valid:

\begin{align}
 \nonumber 
&\lim_{t\to\infty}\mathbb E_{\upalpha_t,\upbeta_t}\left[e^{u(y) 
    P_\omega(t,x(\theta)t)}\right]\\
& \label{aabbb}=
\lim_{t\to\infty}\mathbb P_{\upalpha_t,\upbeta_t}\left(
  \frac{\log\left(P_\omega(t,x(\theta)t)\right)+I(x(\theta))t}{t^{1/3}\sigma(\theta)}\le
                                       y\right), 
\end{align}
The statement (\ref{aabbb}) says that also the right-hand side limit
exists. But we would also like to ensure that this limit corresponds
to
some convergence in distribution.
We will adopt the convention that for the case of a Beta
distribution
with fixed parameters $(\upalpha,\upbeta)$, $\upalpha_t=\upalpha$ and $\upbeta_t=\upbeta$
for all $t\ge 0$.

Let us explain how we will prove (\ref{aabbb}).
 We will need the
 following lemma whose proof we omit (see for example  \cite{Bor-Cor}). 

\medskip 

\begin{lemma}
  \label{lbc}
  Consider a sequence of functions 
  $(f_t)_{t\in\mathbb N}$ mapping $\mathbb R\to[0,1]$ such that

\begin{itemize}

\item[(i)] For each $t$, 
  $f_t(x)$ is strictly decreasing in $x$. 

\item[(ii)] For each $t$, $\lim_{x\to-\infty}f_t(x)=1$ and 
$\lim_{x\to\infty}f_t(x)=0$. 

\item[(iii)] For each $\delta>0$, on $\mathbb 
  R\backslash [-\delta,\delta]$, $f_t$ converges uniformly to 
  $1(x\le 0)$. 

\end{itemize}
 Define the $r$-shift of $f_t$ as 
  $f_t^r(x)=f_t(x-r)$. Consider 
  a sequence of random variables $X_t$ such that for each $r\in\mathbb 
  R$, 

  $$
\lim_{t\to\infty}E[f^r_t(X_t)]=p(r), 
$$
and assume that $p(r)$ is a continuous probability distribution 
function. 
Then $X_t$ converges weakly in distribution to a random variable $X$
which is distributed according to $P(X\le r)=p(r)$. 
  \end{lemma}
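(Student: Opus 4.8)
The plan is to prove the slightly stronger statement that the distribution functions $F_t(r):=P(X_t\le r)$ converge pointwise to $p(r)$ for every $r\in\mathbb R$; since $p$ is assumed to be a continuous distribution function, every point of $\mathbb R$ is a continuity point of $p$, so this pointwise convergence is exactly the asserted weak convergence of $X_t$ to the random variable $X$ with $P(X\le r)=p(r)$. The mechanism is a sandwich estimate comparing $f_t^r$ with the indicator $\mathbf 1(x\le 0)$, using only that $0\le f_t\le 1$ together with hypothesis (iii). (Monotonicity (i) and the boundary limits (ii) of $f_t$ are not in fact needed for this argument, though they do hold for the functions $f_t$ appearing in the applications.)

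First I would fix $r\in\mathbb R$ and $\delta>0$ and introduce the error term $\eps_t:=\sup_{y\ge\delta}f_t(y)+\sup_{y\le-\delta}\bigl(1-f_t(y)\bigr)$. Since $\mathbf 1(x\le 0)$ equals $0$ on $[\delta,\infty)$ and $1$ on $(-\infty,-\delta]$, and both sets lie in $\mathbb R\setminus[-\delta,\delta]$, hypothesis (iii) yields $\eps_t\to 0$ as $t\to\infty$. Writing $f_t^r(X_t)=f_t(X_t-r)$ and decomposing the expectation over the events $\{X_t\le r-\delta\}$, $\{r-\delta<X_t<r+\delta\}$ and $\{X_t\ge r+\delta\}$ — bounding $f_t(X_t-r)$ from below by $1-\eps_t$ on the first event, from above by $\eps_t$ on the third, and in $[0,1]$ on the middle one — one obtains, for all large $t$,
\[
F_t(r-\delta)-\eps_t\;\le\;E\bigl[f_t^r(X_t)\bigr]\;\le\;F_t(r+\delta)+\eps_t .
\]

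Letting $t\to\infty$ and using $E[f_t^r(X_t)]\to p(r)$ together with $\eps_t\to 0$, the left inequality gives $\limsup_{t\to\infty}F_t(r-\delta)\le p(r)$ and the right one gives $\liminf_{t\to\infty}F_t(r+\delta)\ge p(r)$. As $r$ and $\delta$ are arbitrary, replacing $r$ by $r+\delta$ in the first bound and by $r-\delta$ in the second produces
\[
p(r-\delta)\;\le\;\liminf_{t\to\infty}F_t(r)\;\le\;\limsup_{t\to\infty}F_t(r)\;\le\;p(r+\delta)
\]
for every $r$ and every $\delta>0$. Finally, letting $\delta\downarrow 0$ and invoking the continuity of $p$ collapses both sides onto $p(r)$, so $\lim_{t\to\infty}F_t(r)=p(r)$ for all $r$, which is the claim.

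There is no genuine obstacle here: this is a soft, standard fact (it is precisely what makes convergence of the Laplace transforms of $P_\omega(t,x(\theta)t)$ sufficient in the proof of Lemma \ref{laplace}). The only two points that require a bit of care are extracting $\eps_t\to 0$ correctly from the uniform convergence of $f_t$ on the complement of $[-\delta,\delta]$, and the concluding $\delta\downarrow 0$ step, where it is genuinely the continuity of $p$ — not merely its monotonicity — that is used to identify the limit as $p(r)$.
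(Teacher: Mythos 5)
The paper does not actually give a proof of this lemma; it explicitly omits it with a pointer to \cite{Bor-Cor}. Your sandwich argument is the standard one for this kind of statement and is correct: the decomposition of $E[f_t(X_t-r)]$ over the three events $\{X_t\le r-\delta\}$, $\{r-\delta<X_t<r+\delta\}$, $\{X_t\ge r+\delta\}$, together with $\varepsilon_t\to 0$ from (iii), yields $F_t(r-\delta)-\varepsilon_t\le E[f_t^r(X_t)]\le F_t(r+\delta)+\varepsilon_t$, and shifting $r$ and then sending $\delta\downarrow 0$ against the continuity of $p$ gives pointwise convergence of $F_t$ to $p$, which is the desired weak convergence. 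Your side remark that hypotheses (i) and (ii) are not used is also accurate: only $0\le f_t\le 1$ and (iii) enter the argument; (i)–(ii) are there because they hold for the double-exponential functions $f_t$ of (\ref{ff}) and make the statement easier to check in applications. One micro-cosmetic point: since the uniform convergence in (iii) is on $\mathbb R\setminus[-\delta,\delta]$ (open at $\pm\delta$), your suprema should be over $y>\delta$ and $y<-\delta$, or equivalently invoke (iii) with $\delta/2$ so the bounds apply on $|y|\ge\delta$; this has no effect on the conclusion.
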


\medskip

\noindent We will apply Lemma \ref{lbc} to prove  (\ref{aabbb})
making a specific choice of functions and random variables. Let $\theta>0$. Consider the family of functions $(f_t)_{t\in\mathbb N}$ defined by 

\begin{equation}
  \label{ff}
f_t(w):=e^{-e^{w t^{1/3}\sigma(\theta)}}. 
\end{equation}
In the case in which $\upalpha$, $\upbeta$ and $\theta$ are fixed and
satisfy
the assumptions $\upalpha>0$, $\upbeta>0$ and $\theta\in (0,\min\{0.5,0.72\times\upalpha\})$, it is obvious that this family 
satisfies conditions $(i)$, $(ii)$ and $(iii)$ of Lemma \ref{lbc}.
For the case in which $(\upalpha_t)_{t\ge 0}$ and $(\upbeta_t)_{t\ge
  0}$ satisfy (\ref{as1}) and (\ref{as2}), by parts $(iv)$ and $(v)$
of Corollary \ref{corcor}
(which is stated and proven in Section \ref{polyest})
we have that

\begin{equation}
  \label{limti}
\lim_{t\to\infty}t\sigma(\theta) ^3=\infty,
\end{equation}
so that (\ref{ff}) still satisfies $(i)$, $(ii)$ and $(iii)$ of Lemma \ref{lbc}.
Let $(X_t)_{t\in\mathbb N}$ be defined by 

\begin{equation}
  \label{frv}
X_t=\frac{\log P_\omega(t,x(\theta)t)+I(x(\theta))t}{t^{1/3}\sigma(\theta)}. 
\end{equation}
Consider now the statement

\begin{equation}
    \label{ll}
  \lim_{t\to\infty}E[f^r_t(X_t)]=\lim_{t\to\infty}\mathbb 
  E_{\upalpha_t,\upbeta_t}\left[
    e^{u(r) P_\omega(t,x(\theta)t)}\right]=\lim_{t\to\infty}\mathbb 
  P_{\upalpha_t,\upbeta_t}(X_t\le r).
  \end{equation}
  The following proposition summarizes what we have explained
  in relation to (\ref{aabbb}), and will be proven below. 
\medskip 

\begin{proposition}
  \label{con}
Consider the family of functions $(f_t)_{t\in\mathbb N}$
  defined  
  in (\ref{ff}) and the random variables $(X_t)_{t\in\mathbb N}$
  defined in (\ref{frv}).

  \begin{itemize}
  \item[(i)]
      For all $\upalpha>0$, $\beta>0$ and 
  $\theta\in (0,\min\{0.5,0.72\times\upalpha\})$, we have that 
 (\ref{ll}) is satisfied,
  and  the limit  exists for all $r$ and $(X_t)_{t\in\mathbb N}$ converges 
  in distribution to some random variable $X$.

  \item[(ii)] Let $(\upalpha_t)_{t\ge 0}$ and $(\upbeta_t)_{t\ge 0}$
    satisfy (\ref{as1}) and (\ref{as2}). Then,  (\ref{ll}) is satisfied,  
   the limit  exists for all $r$ and $(X_t)_{t\in\mathbb N}$ converges 
  in distribution to some random variable $X$.

  \end{itemize}
\end{proposition}
\medskip 

\noindent The first equality in (\ref{ll}) under the assumption of
parts $(i)$
 or $(ii)$   of Proposition 
\ref{con} is immediate from the definitions. The second statement of
parts $(i)$ and $(ii)$ Proposition 
\ref{con} will be proven in Section \ref{sp1}.

\medskip

\subsection{Determinantal formula for the Laplace transform}
\label{subdet}
The second step in the proof of Theorem \ref{one} will be the
following exact formula for the Laplace transform of $P(t,x)$ proved in
\cite{Barr-Cor}. One way of deriving this formula is to obtain
integral formulas for the moments of $P(t,x)$ using a variant of the
Bethe ansatz and a
non-commutative binomial identity, throug a recurrent
system of equations for the moments of $P(t,x)$. 

\medskip
\begin{theorem}[Barraquand-Corwin, 2017]
  \label{bc17}
For $u\in\mathbb C\backslash \mathbb R_{>0}$, fix $t\in\mathbb Z_{\ge 0}$,
$x\in\{-t,\ldots,t\}$ with the same parity,
and $\upalpha,\upbeta>0$. Then one has

$$
E\left[e^{uP(t,x)}\right]=\det(I-K^{RW}_u)_{L^2(C_0)},
$$
where $C_0$ is a small positively oriented circle containing $0$ but
not $-\upalpha-\upbeta$ nor $-1$,
and $K^{RW}_u:L^2(C_0)\to L^2(C_0)$ is defined by its integral kernel

\begin{equation}
  \label{kernel}
K_u^{RW}(v,v')=\frac{1}{2\pi 
  i}\int_{1/2-i\infty}^{1/2+i\infty}\frac{\pi}{\sin(\pi s)}(-u)^s 
\frac{g^{RW}(v)}{g^{RW}(v+s)}\frac{ds}{s+v-v'},
\end{equation}
where

$$
g^{RW}(v)=\left(\frac{\Gamma(v)}{\Gamma(\alpha+v)}\right)^{(t-x)/2}
\left(\frac{\Gamma(\alpha+\beta+v)}{\Gamma(\alpha+v)}\right)^{(t+x)/2}\Gamma(v).
$$
\end{theorem}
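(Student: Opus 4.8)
The plan is to establish the stated identity by the exactly solvable route of Barraquand--Corwin: derive a closed recursion for the integer moments of $P(t,x)$, solve it by a coordinate Bethe ansatz producing nested contour integrals, and then resum the (entire) moment generating function into a Fredholm determinant via a Mellin--Barnes representation.

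\emph{Step 1: a closed moment system.} Set $\mu_k(t,x):=\mathbb E[P(t,x)^k]$ and, more generally, consider the joint moments $\mathbb E[\prod_{i=1}^k P(t,x_i)]$ of $k$ replicas. Conditioning on the bottom time layer and using that the $B_{s,y}$ are i.i.d.\ Beta, one expands $\prod_i P(t,x_i)$ over the $2^k$ jump choices of the replicas at time $t-1$; replicas at a common site $y$ share the variable $B_{t-1,y}$, so the only averages that occur are $\mathbb E[B^m(1-B)^n]=\frac{\Gamma(\upalpha+m)\Gamma(\upbeta+n)\Gamma(\upalpha+\upbeta)}{\Gamma(\upalpha)\Gamma(\upbeta)\Gamma(\upalpha+\upbeta+m+n)}$. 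Because these factor rationally, the family $\{\mathbb E[\prod_i P(t,x_i)]\}$ satisfies a \emph{closed} linear evolution in $t$: a discrete delta-Bose-gas-type system, a product of one-body difference operators away from the diagonal $\{x_i=x_j\}$ together with a two-body boundary condition on the diagonal governed by the Beta moments. The initial data is $\mu_k(0,x)=\mathbf 1(x\le 0)$, and the requirement that $x$ have the same parity as $t$, with $-t\le x\le t$, is just the support condition for the walk.

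\emph{Step 2: diagonalization by nested contour integrals.} The one-body operator above is a difference operator whose eigenfunctions are ratios of Gamma functions; the function $g^{RW}$ is precisely designed so that $g^{RW}(v)$ carries the associated eigenvalue accumulated over the $t$ steps, the exponents $(t\pm x)/2$ counting the numbers of right and left steps. One then checks that $\mu_k(t,x)$ equals a $k$-fold nested contour integral, of the schematic form
\[
\mu_k(t,x)=\frac{1}{(2\pi i)^k}\oint_{\gamma_1}\cdots\oint_{\gamma_k}\ \prod_{1\le A<B\le k}\frac{v_A-v_B}{v_A-v_B-1}\ \prod_{i=1}^k\Big(\text{a one-body factor built from }g^{RW}\Big)\,\frac{dv_i}{v_i},
\]
on suitably nested contours, the Cauchy-type factor $\prod\frac{v_A-v_B}{v_A-v_B-1}$ being exactly the Bethe two-body $S$-matrix of the Step-1 interaction. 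This is verified by showing the right-hand side solves the Step-1 system with the correct initial data, after which uniqueness of solutions concludes the step.

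\emph{Step 3: resummation and the main obstacle.} Since $0\le P(t,x)\le 1$, the moments are bounded and $\mathbb E[e^{uP(t,x)}]=\sum_{k\ge 0}\frac{u^k}{k!}\mu_k(t,x)$ is entire in $u$, with absolutely convergent series. Feeding the Step-2 formulas into this sum and applying the standard Mellin--Barnes resummation (cf.\ \cite{BCR13,BCF14,Barr-Cor}) --- one replaces $\frac{u^k}{k!}$ by integrals of $\frac{\pi}{\sin(\pi s)}(-u)^s$ over the line $\mathrm{Re}\,s=1/2$, whose residues at $s=1,2,\dots$ regenerate the sum over $k$ --- reorganizes the symmetrized sum of nested contour integrals into $\det(I-K^{RW}_u)_{L^2(C_0)}$ with kernel given in (\ref{kernel}). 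Here $C_0$ must separate $0$ (a pole of $v\mapsto g^{RW}(v)/g^{RW}(v+s)$ for $\mathrm{Re}\,s=1/2$, hence enclosed) from the remaining singular points, in particular $-1$ and $-\upalpha-\upbeta$, which must stay outside. The delicate parts --- and the main obstacle --- lie in this last step: justifying the interchange of the infinite sum with the contour integrals, tracking the pole structure so that deforming the Step-2 nested contours down to the single circle $C_0$ crosses exactly the right residues, and checking that $K^{RW}_u$ is trace class on $L^2(C_0)$ so that the Fredholm determinant is well defined. This is carried out in full in \cite{Barr-Cor}, to which we refer for the complete argument; we have recorded the statement here only in the form needed below.
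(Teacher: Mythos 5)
Both you and the paper import this theorem from Barraquand--Corwin without proof: the paper's entire discussion of it is a single remark that ``one way of deriving this formula is through a non-commutative binomial identity applied to solve a recurrent system of equations for the moments of $P(t,x)$,'' and thereafter it is used as a black box. Your sketch is correct in spirit and matches the standard route, but it describes a somewhat different derivation of the intermediate moment formula than the paper's hint: you propose a direct replica/coordinate Bethe ansatz derivation of the nested contour integral representation for $\mathbb E[\prod_i P(t,x_i)]$, while the paper's remark points to the algebraic route actually taken in \cite{Barr-Cor}, namely degenerating $q$-Hahn Boson moment formulas via a non-commutative $q$-binomial identity. These are alternative derivations of the same nested contour formula, and both then pass to the Fredholm determinant by the Mellin--Barnes resummation you outline in Step~3. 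Your scoping, deferring the contour-deformation, interchange, and trace-class verifications to \cite{Barr-Cor}, is appropriate and matches how the paper invokes the result. One small caveat: in Step~1 the phrase ``two-body boundary condition'' slightly oversells the structure --- for the Beta walk, $m$ replicas at a common site produce genuinely $m$-body moment coefficients $\mathbb E[B^j(1-B)^{m-j}]$, and the reduction to a two-body (Bethe-integrable) form is a nontrivial structural fact that \cite{Barr-Cor} inherit from the $q$-Hahn framework rather than check directly; you correctly flag the details as living in that reference.
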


\medskip

\subsection{Asymptotic analysis of the Laplace transform}
\label{he}
Here we will show how to finish the proof of Theorems \ref{one} and \ref{int-dis},
proving
the convergence of the Laplace transform of the normalized
fluctuations
of $\log P_\omega(t,x(\theta)t)$ to the GUE Tracy-Widom distribution.
From Proposition \ref{con}, we can see that
to prove these theorems, we can use the determinantal
formula of Theorem \ref{bc17}. The asymptotic steep-descent
analysis that will be later used will be similar to
the proof of Theorem 5.2
of \cite{Barr-Cor} or Theorem 2.1 of \cite{K21}. Nevertheless,
to prove the necessary steep-descent properties,
a technical analysis of high complexity involving the polygamma
functions must be executed, both to deal with
 arbitrary values of $\upalpha>0$ and
 $\upbeta>0$ in  Theorem \ref{one} (specially in order to
 achieve the range $\theta\in (0,0.5)$ of validity for all
 $\upalpha\ge 0.7$ and $\upbeta>0$), and to deal
 with values of $\upalpha$ and $\upbeta$ tending to
 $\infty$ in Theorem \ref{int-dis}.

We first rewrite the kernel $K^{RW}_u(v,v')$ choosing $u=u(y)$
as in (\ref{uy}) so that

\begin{equation}
  \label{ku}
K_u^{RW}(v,v')=\frac{1}{2\pi 
  i}\int_{1/2-i\infty}^{1/2+i\infty}\frac{\pi}{\sin(\pi (z-v))}
e^{t(h(z)-h(v))-t^{1/3}\sigma(\theta) y(z-v)}\frac{\Gamma(v)}{\Gamma(z)}
\frac{dz}{z-v'}.
\end{equation}
where we recall the definition of $h$ in (\ref{hz}).
It can be checked that  $\theta$ is a critical point
for the function $h$, so that $h'(\theta)=h''(\theta)=0$. We hence
follow the steepest-descent method deforming the integration
contours so that they go across this critical point.
As in \cite{Barr-Cor}, we deform the contour $C_0$
in Theorem \ref{bc17} (the Fredholm contour) to the contour

\begin{equation}
\label{ct}
C_\theta:=\{z\in\mathbb C: |z|=\theta\},
\end{equation}
defined as the circle centered at $0$ with radius $\theta$,
and the vertical line in $\mathbb C$ passing through $1/2$
in the definition of the kernel (\ref{kernel}) to

\begin{equation}
\label{dt}
D_\theta=\{\theta+iy:y\in\mathbb R\}.
\end{equation}
There is no problem in doing this as long as we avoid the poles, which
happens if

\begin{equation}
\label{tm}
\theta<\min\{\upalpha+\upbeta,1/2\}.
\end{equation}
Let also for $\epsilon>0$, $B(\theta,\epsilon)$ be the ball
of radius $\epsilon$ centered at $\theta$ in $\mathbb C$, 

\begin{equation}
\label{ce}
C^\epsilon_\theta:=C_\theta\cap B(\theta,\epsilon),
\end{equation}
 the part of the contour $C_\theta$ inside
$B(\theta,\epsilon)$,
while 

\begin{equation}
\label{de}
D^\epsilon_\theta:=D_\theta\cap B(\theta,\epsilon),
\end{equation}
 the part of the contour $D_\theta$
inside $B(\theta,\epsilon)$.
As it is standard in several results proving
convergence to the GUE Tracy-Widom distribution (see for example
\cite{Barr-Cor}  or \cite{K21}), the proof will be based in steep-descent
methods
for the complex contours. We summarize this in a proposition that
we state below.
To state it, we will modify the the contour
$C_\theta^\epsilon$, to a contour which is  a piece of a wedge, which in the limit becomes
a full wedge defined by a certain angle $\phi$. Let us explain how to
choose $\phi$. For $L>0$ define the contour for arbitrary $\phi\in (\pi/6,\pi/2)$,

$$
W^L_\theta:=\left\{\theta+|y|e^{i(\pi-\phi) {\text{sgn}}(y)}:y\in [-L,L]\right\}.
$$
Note that $C_\theta^\epsilon$ is an arc of circle which crosses
$\theta$ vertically. We now choose $L$ and $\phi$ so that the
endpoints
of $W^L_\theta$ and of $C_\theta^\epsilon$ coincide. Then note that for
$\epsilon$
small enough we can replace $C_\theta^\epsilon$ by $W^L_\theta$.
We will also introduce the extension of the piece of wedge $W_\theta^L$
outside of the ball $B(\theta,\epsilon)$ through the contour

$$C_\theta^{\epsilon,+}:=C_\theta-B(\theta,\epsilon),
$$
defined as (see Figure \ref{figure1})

$$
V_\theta^\epsilon:=W_\theta^L\cup C_\theta^{\epsilon,+}.
$$

\begin{figure}[!h]

\begin{tikzpicture}[decoration={markings, 
mark=at position 7cm with {\arrow[line width=3pt]{>}}
}
]
\draw[help lines,->] (5,0) -- (12,0) coordinate (xaxis);



\path[draw, dashed, gray, line width=1pt,postaction=decorate]
(8,-3.4641016) arc (-60:60:4); 

\path[draw,line width=1.5pt,postaction=decorate]
(8,-3.4641016) arc (-60:-30:4)-- (10,0) node[below right]
{$\theta$} 
--(9.464101615,2) 
arc(30:60:4);

\node at (3,2.5) {$\ $}; 
\node at (8,3) {$C_\theta^{\epsilon,+}$}; 
\node at (9,1) {$W^L_\theta$}; 
\node at (8,-3) {$C_\theta^{\epsilon,+}$}; 
\end{tikzpicture}

\caption{The contour $V_\theta^\epsilon$ composed by the union of the 
  contours $C_\theta^{\epsilon,+}$ and $W^L_\theta$, and in dashed 
  line the contour 
  $C_\theta^\epsilon$.}
\label{figure1}
\end{figure}
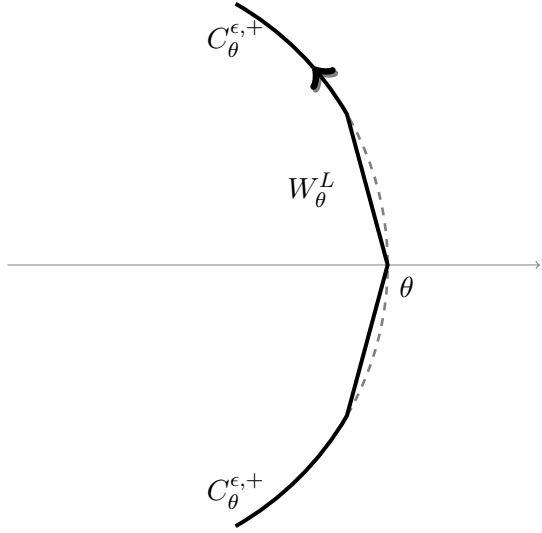

\medskip

With the
choice of $\phi$ explained above,  we also define

\begin{equation}
  \label{cy}
W_\theta^\infty:=\left\{\theta+ |y| e^{i(\pi-\phi)
    {\text{sgn}}(y)}:y\in\mathbb R\right\}. 
\end{equation}
We will need to define for $y\in\mathbb R$ the kernel  $K_y$,

  $$
  K_y(w,w'):=\frac{1}{2\pi i}\int_{e^{-i\pi/3}\infty}^{e^{i\pi/3}\infty}
  \frac{1}{(z-w')(w-z)}\frac{e^{z^3/3-yz}}{e^{w^3/3-yw}}dz, 
  $$
  where the contour for $z$ is a wedge-shaped contour constituted of 
  two 
  rays going to infinity in the directions $e^{-i\pi/3}$ and $e^{i\pi 
    /3}$
  that do not intersect $W_\theta^\infty$. 

\medskip
\begin{prop}
  \label{p1}
  Consider the equality
  
  \begin{equation}
    \label{liminfty}
  \lim_{t\to\infty}
 \det\left(I-K_u^{RW}\right)_{L^2(C_\theta)}
=\det (I+K_y)_{L^2(W_\theta^\infty)}. 
\end{equation}

    \begin{itemize}
    \item[(i)]
        For all $\upalpha>0$, $\beta>0$ and 
  $\theta\in (0,\min\{0.5,0.72\times\upalpha\})$, we have that 
        (\ref{liminfty}) is satisfied.

        \item[(ii)] Assume that $(\upalpha_t)_{t\ge 0}$ and
          $(\upbeta_t)_{t\ge 0}$ satisfy (\ref{as1}) and
          (\ref{as2}). Then
           for all
          $\theta\in (0,0.5)$,  (\ref{liminfty})
          is
          satisfied.
          \end{itemize}

    \end{prop}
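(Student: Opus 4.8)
The plan is to prove Proposition \ref{p1} by the now-standard strategy of turning the Fredholm determinant identity $\lim_t \det(I-K_u^{RW})_{L^2(C_\theta)}=\det(I+K_y)_{L^2(W_\theta^\infty)}$ into (a) a pointwise-on-contours convergence of the kernels after a suitable local change of variables near the critical point $\theta$, together with (b) a dominated-convergence-type control that lets us pass to the limit inside the Fredholm series. Concretely, I would first deform the Fredholm contour $C_0$ of Theorem \ref{bc17} to $C_\theta$ and the vertical line $1/2+i\mathbb{R}$ in \eqref{kernel} to $D_\theta$, which is legitimate by \eqref{tm} under the hypotheses on $\theta$; then replace the arc $C_\theta^\epsilon$ by the piece of wedge $W_\theta^L$ and the outside part by $C_\theta^{\epsilon,+}$, obtaining the contour $V_\theta^\epsilon$ of Figure \ref{figure1}, and similarly tilt $D_\theta^\epsilon$ so that it meets $\theta$ along the steepest-descent direction. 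The exponent $t(h(z)-h(v))$ with $h'(\theta)=h''(\theta)=0$ is cubic to leading order, so the correct local rescaling is $v=\theta+t^{-1/3}\hat w$, $v'=\theta+t^{-1/3}\hat w'$, $z=\theta+t^{-1/3}\hat z$, under which $t(h(z)-h(v))\to \tfrac{h'''(\theta)}{6}(\hat z^3-\hat w^3)$ and, by the definition of $\sigma(\theta)$ in \eqref{sigma}, after absorbing the constant $h'''(\theta)/6$ into the scale one recovers exactly the cubic weight $e^{z^3/3-yz}/e^{w^3/3-yw}$ of $K_y$; the ratio $\Gamma(v)/\Gamma(z)\to 1$, the factor $\pi/\sin(\pi(z-v))\to 1/(z-v)$, and $dz/(z-v')$ rescales correctly, so the rescaled kernel converges pointwise to $K_y$ on the limiting contours $W_\theta^\infty$ (for $w,w'$) and the Airy $z$-wedge.

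The second, and genuinely hard, ingredient is the quantitative control: I must show that on the global contours $V_\theta^\epsilon$ and the tilted $D_\theta$ the real part $\mathrm{Re}\,(h(z)-h(v))$ is bounded above by a strictly negative quantity away from $\theta$ (the \emph{steep-descent property}), uniformly enough that the contributions of $|v-\theta|\ge\epsilon$ and $|z-\theta|\ge\epsilon$ are exponentially negligible, while on the $\epsilon$-neighbourhood the Gaussian/cubic decay of the rescaled integrand provides an integrable dominating function. This is precisely where the hypothesis $\theta\in(0,\min\{0.5,0.72\,\upalpha\})$ in part (i) enters, and it is the reason the excerpt announces "a technical analysis of high complexity involving the polygamma functions": the sign of $\mathrm{Re}\,h''$ along the relevant rays, and hence the admissible opening angle $\phi\in(\pi/6,\pi/2)$ of the wedge $W_\theta^L$, is governed by inequalities among $\Psi_1,\Psi_2$ evaluated at $\theta,\theta+\upalpha,\theta+\upalpha+\upbeta$, and establishing these for all $\upalpha>0,\upbeta>0$ in the stated range is the crux. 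These steep-descent estimates are exactly the content of Lemmas \ref{fo} and \ref{im} (and the bounds derived from them in Section \ref{sde}), which I am entitled to assume here; given them, the passage from pointwise convergence plus the exponential-decay bound to convergence of each term of the Fredholm expansion, and then of the whole series (using that $|\det(I-K)|$ is controlled by Hadamard's bound $\sum_n \tfrac{1}{n!}\|K\|^n$ with $\|K\|$ uniformly bounded in $t$ by the same estimates), is routine.

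For part (ii), the argument is identical in structure but every estimate must be tracked in its dependence on $(\upalpha_t,\upbeta_t)$ as these tend to infinity. The key new input is Corollary \ref{corcor} (stated in Section \ref{polyest}), in particular \eqref{limti}, which guarantees $t\sigma(\theta)^3\to\infty$ under \eqref{as1}--\eqref{as2}; this is what makes the effective time parameter $t\sigma^3$ diverge, so that after the rescaling $v=\theta+(t\sigma(\theta)^3)^{-1/3}\hat w$ the same cubic limit is obtained and the error terms in the Taylor expansion of $h$ (now carrying powers of $\upalpha_t,\upbeta_t$ through the higher polygamma derivatives) are still $o(1)$ precisely because of \eqref{as2} and the growth condition encoded in the function $\mathfrak g$. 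The steep-descent inequalities of Lemmas \ref{fo} and \ref{im} are again invoked, now in their $t$-dependent form. I expect the main obstacle throughout to be item (b) for part (i): proving the polygamma inequalities that yield the strict negativity of $\mathrm{Re}\,(h(z)-h(v))$ on the full contours for the whole parameter range $\upalpha\ge 0.7$, $\upbeta>0$, $\theta\in(0,0.5)$ — i.e. the verification that the chosen wedge angle $\phi$ can be taken admissible — since this is the single place where the restrictions of \cite{Barr-Cor} and \cite{K21} are actually removed. Everything else (contour deformation, local Taylor expansion, Hadamard bound, dominated convergence on the Fredholm series) is standard once those estimates are in hand.
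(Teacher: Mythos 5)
Your proposal is correct and follows essentially the same strategy as the paper: contour deformation to $C_\theta$ and $D_\theta$, local wedge replacement near $\theta$, the $\sigma(\theta)t^{1/3}$ rescaling that turns the cubic critical behaviour of $h$ into the Airy weight $e^{z^3/3-yz}$, pointwise convergence of the rescaled kernel, and a dominated-convergence passage through the Fredholm series, all underwritten by the steep-descent estimates of Lemmas \ref{fo}, \ref{im} and Corollary \ref{corcor}. The one small deviation is cosmetic: rather than tilting $D_\theta$ near $\theta$, the paper keeps the vertical line and detours around the pole via the $t$-dependent semicircle $S_\theta^t$ of radius $(\sigma t^{1/3})^{-1}$, which lets it invoke Corollary \ref{corab} on the vertical part and the Taylor bound of Lemma \ref{taylor1} on the semicircle; both choices lead to the same $L_\infty$ limit contour after rescaling.
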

  \medskip

  The proof of Proposition \ref{p1} will proceed in two steps. Firstly,
  we will truncate all the integrations to contours at a distance
  $\epsilon$
  of $\theta$ for some $\epsilon\in (0,\theta/2)$, showing that
  
  $$
\lim_{t\to\infty} \det\left(I-K_u^{RW}\right)_{L^2(C_\theta)}
=\lim_{t\to\infty} \det\left(I-K_{y,\epsilon}^{RW}\right)_{L^2(C^\epsilon_\theta)}, 
$$
where 

\begin{align}
  \nonumber 
&K_{y,\epsilon}^{RW}(v,v')\\
  \label{keps}
  &=\frac{1}{2\pi i}\int_{D^\epsilon_\theta}
\frac{\pi}{\sin (\pi (z-v))} e^{t(h(z)-h(v))-t^{1/3}\sigma(\theta) 
y(z-v)}\frac{\Gamma (v)}{\Gamma(z)}\frac{1}{z-v'}dz. 
\end{align}
Secondly, we will prove that

  $$
  \lim_{t\to\infty}
 \det\left(I-K_{y,\epsilon}^{RW}\right)_{L^2(C^\epsilon_\theta)}
=\det (I+K_y)_{L^2(C)}.
$$
The details of this proof will be given in Section \ref{sp1}.
From Proposition \ref{p1},
  Theorem \ref{one} now follows as in \cite{Barr-Cor} from the identity

  $$
\det (I+K_y)_{L^2(C)}=\det (I-K_{Ai})_{L^2(y,+\infty)},
$$
valid for $y\in\mathbb R$.
It remains to  prove Proposition \ref{p1}. We will first prove
  steep-descent estimates needed for the proof of Theorem \ref{one} and quantitative versions of them  needed for the proof of Theorem \ref{int-dis}.

  \medskip

  \subsection{Preliminary estimates involving the polygamma functions}
  \label{polyest}
  Here we will derive several estimates which will eventually
  prove the necessary steep-descent properties of the
  integrals along the complex contours in the Fredholm determinants
  and its kernel.

  Recall that the polygamma function of order $k$, for $k\ge 1$, is defined as

  $$
\Psi_k(z):=\frac{d^k}{dz^k}\Psi(z).
$$
  We start with several estimates involving the polygamma functions.
\medskip

\begin{lemma}
  \label{poly} The following inequalities are satisfied.

  \begin{itemize}

\item[(i)]  For all  $k\ge 1$ and $x>0$,

\begin{align}
  &\label{pk}
    k!  \left(\frac{1}{x^{k+1}}+\frac{1}{k}\frac{1}{(x+1)^k}\right) 
      \le (-1)^{k+1}\Psi_k(x)\le   k!    \left(\frac{1}{x^{k+1}}+\frac{1}{k}\frac{1}{x^k}\right). 
\end{align}

\item[(ii)]  For every $x>0$, $y>0$ and $k\ge 1$ we have that

  \begin{align}
 \nonumber
&   k! \mathfrak 
                         g(x+1,y)\left(\frac{1}{x^k}+\frac{1}{k}\frac{1}{(x+1)^{k-1}}\right)\\
\nonumber &  \le  k! \mathfrak 
                         g(x,y)\frac{1}{x^k}+(k-1)!\frac{1}{(x+1)^{k-1}}\mathfrak
    g(x+1,y)\\
\nonumber &                         \le (-1)^{k+1}(\Psi_k(x)-\Psi_k(x+y)) \\
     \label{inpsi1}      &     \le(k+1)! \mathfrak g(x,y)\left(\frac{1}{x^k}+\frac{1}{k+1}\frac{1}{x^{k-1}}\right). 
\end{align}

\end{itemize}
  \end{lemma}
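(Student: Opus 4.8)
\medskip

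The plan is to derive both inequalities from a single source: the integral representation of the polygamma functions,
\[
(-1)^{k+1}\Psi_k(x)=\int_0^\infty \frac{u^k e^{-xu}}{1-e^{-u}}\,du,
\]
together with the elementary two-sided bound $\frac{u}{1-e^{-u}}\in[1,\,1+\tfrac{u}{?}]$ — more precisely I would use $1\le \frac{u}{1-e^{-u}}\le 1+u$ for $u>0$ only as a crude device and instead exploit the sharper structural fact that $\frac{1}{1-e^{-u}}=\sum_{n\ge 0}e^{-nu}$, so that $(-1)^{k+1}\Psi_k(x)=\sum_{n\ge 0}\frac{k!}{(x+n)^{k+1}}$. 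Splitting off the $n=0$ term and comparing the tail $\sum_{n\ge 1}(x+n)^{-(k+1)}$ with the integrals $\int_0^\infty (x+s)^{-(k+1)}ds=\frac{1}{k}x^{-k}$ from above and $\int_1^\infty(x+s)^{-(k+1)}ds=\frac1k (x+1)^{-k}$ from below (monotone decreasing summand, standard integral test) gives exactly~\eqref{pk}. This is the routine part.

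\medskip

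For part~(ii), the quantity of interest is $(-1)^{k+1}(\Psi_k(x)-\Psi_k(x+y))=\sum_{n\ge 0}k!\big((x+n)^{-(k+1)}-(x+y+n)^{-(k+1)}\big)$. I would first handle the single term $n=0$: by the mean value theorem applied to $t\mapsto t^{-(k+1)}$ on $[x,x+y]$, this term equals $k!(k+1)\,y\,\zeta^{-(k+2)}$ for some $\zeta\in(x,x+y)$, and $\mathfrak g(x,y)=\frac{y}{x(x+y)}$ is precisely the quantity that makes $y\,\zeta^{-(k+2)}$ comparable to $\mathfrak g(x,y)/x^{k-1}$ and to $\mathfrak g(x+1,y)/(x+1)^{k-1}$ up to the constants appearing in the statement — the identity $\frac{y}{x(x+y)}=\frac1x-\frac1{x+y}$ makes the telescoping transparent. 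For the tail $n\ge 1$, I would write each difference as $\int_0^y (k+1)(x+y'+n)^{-(k+2)}dy'$, interchange sum and integral, recognize $\sum_{n\ge 1}(x+y'+n)^{-(k+2)}$ as bounded above/below by integrals of the form computed in part~(i) (now with exponent $k+2$ and shifted base point $x+y'$), and then integrate in $y'$. The upper bound collapses to something of the form $(k+1)!\,\mathfrak g(x,y)\big(x^{-k}+\text{lower order}\big)$ after bounding $x+y'\ge x$, while the lower bound, keeping $x+y'\le x+y$ and using $\mathfrak g(x+1,y)$, produces the middle two lines of~\eqref{inpsi1}; the very first inequality of~\eqref{inpsi1} is then a purely algebraic rearrangement using $\mathfrak g(x+1,y)\le\mathfrak g(x,y)$ and $\tfrac1{(x+1)^{k-1}}\le\tfrac1{x^{k-1}}$.

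\medskip

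The main obstacle I anticipate is not the analysis but the bookkeeping of constants: the statement commits to the precise combinations $\frac1{x^k}+\frac1k\frac1{(x+1)^k}$ (lower, part i), $\frac1{x^k}+\frac1{k+1}\frac1{x^{k-1}}$ (upper, part ii), etc., and these are not the constants one gets from the crudest integral test — one must choose the comparison integrals with endpoints $1$ versus $0$ judiciously, and in part~(ii) one must be careful about which of $\mathfrak g(x,y)$ or $\mathfrak g(x+1,y)$ to use on each side so that the chain of four inequalities closes up with no slack lost. A secondary subtlety is the boundary behavior in $k=1$, where $x^{k-1}=1$ and some terms degenerate; I would check that case separately to make sure the displayed inequalities remain valid (they do, since $\mathfrak g(x,y)/x^0=\mathfrak g(x,y)$ and the integral comparisons still apply). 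No deformation or convexity beyond the monotonicity of $t\mapsto t^{-m}$ is needed, so once the constants are pinned down the proof is short.
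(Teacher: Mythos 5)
Your plan is correct and essentially the same as the paper's: part (i) uses the series $\Psi_k(z)=(-1)^{k+1}k!\sum_{j\ge 0}(z+j)^{-(k+1)}$ with the same integral comparisons (tail compared to $\int_0^\infty$ for the upper bound, $\int_1^\infty$ for the lower), and part (ii) writes the difference as $\int_x^{x+y}(-1)^{k+2}\Psi_{k+1}(u)\,du$ (the Fubini-swap of your term-by-term integration in $y'$), applies part (i) to $\Psi_{k+1}(u)$ inside the integral, and then bounds $\int_x^{x+y}u^{-m}\,du$ by pulling out $u^{-(m-2)}$ and using the identity $\mathfrak g(x,y)=\tfrac1x-\tfrac1{x+y}$. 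The only cosmetic difference is that the paper's integrate-first organization treats all $n$ uniformly, so the separate mean-value argument for the $n=0$ term is unnecessary.
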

  \begin{proof} 
    First note that for all $k\ge 1$,

    \begin{equation}
      \label{expansion}
\Psi_k(z)=(-1)^{k+1} k!\sum_{j=0}^\infty\frac{1}{(z+j)^{k+1}}.
\end{equation}

\medskip

\noindent {\it Proof of part $(i)$.} From 
(\ref{expansion}) we have that 

\begin{align*}
&(-1)^{k+1}\Psi_k(x)=
k!\left(\frac{1}{x^{k+1}}+\sum_{j=1}^\infty\frac{1}{(x+j)^{k+1}}\right) 
    \le 
    k!\left(\frac{1}{x^{k+1}}+\frac{1}{k}\frac{1}{x^k}\right). 
  \end{align*}
Similarly, 

\begin{align*}
&(-1)^{k+1}\Psi_k(x)=
k!\left(\frac{1}{x^{k+1}}+\sum_{j=1}^\infty\frac{1}{(x+j)^{k+1}}\right)\ge 
      k!\left(\frac{1}{x^{k+1}}+\frac{1}{k}\frac{1}{(x+1)^k}\right) 
  \end{align*}

\medskip
\noindent {\it Proof of part $(ii)$.}
From the upper bound of part $(i)$ note that

\begin{align*}
  &
    (-1)^{k+1}(\Psi_k(x)-\Psi_k(x+y)) 
    =\int_x^{x+y}(-1)^{k+2}\Psi_{k+1}(u)du\\
  &  \le 
    (k+1)!\int_x^{x+y}\left(\frac{1}{u^{k+2}}+\frac{1}{k+1}\frac{1}{u^{k+1}}\right)du \\
&    \le(k+1)! \mathfrak g(x,y)\left(\frac{1}{x^k}+\frac{1}{k+1}\frac{1}{x^{k-1}}\right). 
\end{align*}
For the lower bound,
  first note that

  $$
  \int_x^{x+y}u^{-(k+2)}du=
  \frac{1}{k+1}\left(\frac{1}{x^{k+1}}-\frac{1}{(x+y)^{k+1}}\right)
  \ge\frac{1}{k+1}\frac{1}{x^k}\mathfrak g(x,y).
  $$
  Hence,
  
\begin{align*}
  &
    (-1)^{k+1}(\Psi_k(x)-\Psi_k(x+y)) 
   \ge 
    (k+1)!\int_x^{x+y}\left(\frac{1}{u^{k+2}}+\frac{1}{k+1}\frac{1}{(u+1)^{k+1}}\right)du
  \\
  &\ge 
    (k+1)!\left(\frac{1}{k+1}\frac{1}{x^k}\mathfrak g(x,y)+\frac{1}{k(k+1)}\frac{1}{(x+1)^{k-1}}
    \mathfrak g(x+1,y)\right)\\
&  \ge k! \mathfrak g(x+1,y)\left(\frac{1}{x^k}+\frac{1}{k}\frac{1}{(x+1)^{k-1}}\right). 
\end{align*}

\end{proof}
\medskip

We will now apply Lemma \ref{poly} to derive several crucial
properties involving the function $h$ [cf. (\ref{hz})], which plays
a central role in the steep-descent analysis that will be made.
For the record, we write the
following expression for $h^{(k)}(\theta)$, valid when $k\ge 2$,
  \begin{align}
  \nonumber 
  &h^{(k)}(\theta)=\Psi_{k-1}(\theta+\upalpha)-\Psi_{k-1}(\theta+\upalpha+\upbeta)\\
  \label{fracp} 
    &
               +\frac{\Psi_1(\theta+\upalpha)-\Psi_1(\theta+\upalpha+\upbeta)}{\Psi_1(\theta)-\Psi_1(\theta+\upalpha 
               +\upbeta)}\left(\Psi_{k-1}(\theta+\upalpha+\upbeta)-\Psi_{k-1}(\theta)\right). 
\end{align}

\medskip
\begin{corollary}
 \label{corcor} 
  The following estimates are satisfied,

  \begin{itemize}

  \item[(i)] For all $\upalpha>0$ and $\upbeta>0$,

    $$
\Psi_1(\upalpha)-\Psi_1(\upalpha+\upbeta)\ge\mathfrak g(\upalpha+1,\upbeta).
    $$
  \item[(ii)] For all $\upalpha>0$, $\upbeta>0$, $\theta>0$ and $k\ge 1$, we have that

    $$
    |\Psi_{k}(\theta+\upalpha)-\Psi_{k}(\theta+\upalpha+\upbeta)|\le
   (k+1)!\mathfrak 
    g(\upalpha,\upbeta)\frac{1+\theta^{-1}}{\theta^{k-1}}. 
$$
    \item[(iii)] For all $\upalpha>0$, $\upbeta>0$, $\theta\in (0,0.5)$ and $k\ge 3$
      we have that

$$
|h^{(k)}(\theta)|\le     64\mathfrak
g(\upalpha,\upbeta)\frac{k!}{\theta^{k+2}}.
$$

\item[(iv)]
  For all $\upalpha_0>0$ there is a $\theta_0>0$
  such that for  $\upalpha\ge\upalpha_0$, $\upbeta>0$
  and $\theta\in (0,\theta_0)$ one has that 

  $$
  \frac{\sigma^3(\theta)}{2\theta}+\frac{h^{(4)}(\theta)}{4!}\ge 
  \oldconstant{11}(\theta,\upalpha_0)\mathfrak g(\upalpha,\upbeta). 
  $$
  where $\newconstant\label{11}(\theta,\upalpha_0)>0$ depends only 
  on $\theta$ and $\upalpha_0$.  Furthermore $\theta_0(\alpha_0)$
  can be chosen so that $\lim_{\alpha_0\to\infty}\theta_0=\infty$.

  \item[(v)] For all $\upalpha>0$, $\upbeta>0$ and $\theta>0$
    we have that

    \begin{equation}
    \label{vii}
\sigma^3(\theta)=\frac{1}{2}h^{(3)}(\theta)>0\quad{\rm and}\quad
h^{(4)}(\theta)<0.
\end{equation}

\end{itemize}
\end{corollary}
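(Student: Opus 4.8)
\medskip

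The plan is to dispatch (i) and (ii) directly from Lemma~\ref{poly}, and to reduce (iii)--(vi) to the analysis of a single weighted mean-value representation of $h^{(k)}(\theta)$ extracted from (\ref{fracp}). Part (i) is Lemma~\ref{poly}(ii) with $k=1$, $x=\upalpha$, $y=\upbeta$: the first lower bound in (\ref{inpsi1}) gives $\Psi_1(\upalpha)-\Psi_1(\upalpha+\upbeta)\ge \mathfrak g(\upalpha,\upbeta)/\upalpha+\mathfrak g(\upalpha+1,\upbeta)\ge\mathfrak g(\upalpha+1,\upbeta)$. For (ii), apply the upper bound in (\ref{inpsi1}) with $x=\theta+\upalpha$, $y=\upbeta$ and the same $k$; since $\theta>0$ we have $\mathfrak g(\theta+\upalpha,\upbeta)\le\mathfrak g(\upalpha,\upbeta)$ and $(\theta+\upalpha)^{-j}\le\theta^{-j}$, so the right-hand side is at most $(k+1)!\,\mathfrak g(\upalpha,\upbeta)(\theta^{-k}+\tfrac1{k+1}\theta^{-(k-1)})\le(k+1)!\,\mathfrak g(\upalpha,\upbeta)\,\theta^{-(k-1)}(1+\theta^{-1})$, and since $\Psi_k(\theta+\upalpha)-\Psi_k(\theta+\upalpha+\upbeta)$ is of constant sign one may pass to the absolute value.

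\medskip

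For (iii)--(vi) the key step is the following. Rewriting the two polygamma differences in (\ref{fracp}) as integrals of $\Psi_k$ and splitting $\int_\theta^{\theta+\upalpha+\upbeta}=\int_\theta^{\theta+\upalpha}+\int_{\theta+\upalpha}^{\theta+\upalpha+\upbeta}$ gives
\[
h^{(k)}(\theta)=F\!\int_\theta^{\theta+\upalpha}\!\!\Psi_k(u)\,du-(1-F)\!\int_{\theta+\upalpha}^{\theta+\upalpha+\upbeta}\!\!\Psi_k(u)\,du,\qquad F:=\frac{\Psi_1(\theta+\upalpha)-\Psi_1(\theta+\upalpha+\upbeta)}{\Psi_1(\theta)-\Psi_1(\theta+\upalpha+\upbeta)}\in(0,1),
\]
the membership $F\in(0,1)$ following from strict monotonicity of $\Psi_1$. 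By (\ref{expansion}), $-\Psi_2(u)=2\,S_3(u)>0$ on $(0,\infty)$ where $S_m(u):=\sum_{j\ge0}(u+j)^{-m}$; taking $k=2$ above and using $h''(\theta)=0$ yields $F\int_\theta^{\theta+\upalpha}(-\Psi_2)=(1-F)\int_{\theta+\upalpha}^{\theta+\upalpha+\upbeta}(-\Psi_2)=:J>0$. Applying the weighted mean value theorem on each interval with the positive weight $-\Psi_2$ produces $\theta<\xi_2<\theta+\upalpha<\xi_1<\theta+\upalpha+\upbeta$ with
\[
h^{(k)}(\theta)=J\bigl(g_k(\xi_2)-g_k(\xi_1)\bigr),\qquad g_k(u):=\frac{\Psi_k(u)}{-\Psi_2(u)}=\frac{(-1)^{k+1}k!}{2}\,\frac{S_{k+1}(u)}{S_3(u)}.
\]
Now (vi): Cauchy--Schwarz gives $S_{m+1}^2\le S_mS_{m+2}$, so $\tfrac{d}{du}\tfrac{S_4}{S_3}=\tfrac{3S_4^2-4S_3S_5}{S_3^2}<0$, and, multiplying $S_4^2\le S_3S_5$ by $S_5^2\le S_4S_6$ to get $S_4S_5\le S_3S_6$, also $\tfrac{d}{du}\tfrac{S_5}{S_3}=\tfrac{3S_4S_5-5S_3S_6}{S_3^2}<0$; hence $g_3=3\,S_4/S_3$ is strictly decreasing and $g_4=-12\,S_5/S_3$ strictly increasing, so $h^{(3)}(\theta)>0$ and $h^{(4)}(\theta)<0$, and comparing (\ref{fracp}) at $k=3$ with (\ref{sigma}) gives $h^{(3)}(\theta)=2\sigma^3(\theta)$, i.e.\ (\ref{vii}). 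For (iii), estimate $|h^{(k)}(\theta)|\le J\,(\xi_1-\xi_2)\,\sup_{[\xi_2,\xi_1]}|g_k'|$ using $\xi_1-\xi_2<\upalpha+\upbeta$, the bound $J\le\min\{\Psi_1(\theta)-\Psi_1(\theta+\upalpha),\Psi_1(\theta+\upalpha)-\Psi_1(\theta+\upalpha+\upbeta)\}$ (which by part (ii) and Lemma~\ref{poly}(ii) is $O(\mathfrak g(\upalpha,\upbeta))$ on the relevant range of $\theta$), and the control of $|g_k'|=\tfrac{k!}{2}\bigl|\tfrac{d}{du}\tfrac{S_{k+1}}{S_3}\bigr|$ on $[\xi_2,\xi_1]\subset[\theta,\infty)$ via the elementary two-sided bounds $u^{-m}\le S_m(u)\le u^{-m}+\tfrac{u^{-(m-1)}}{m-1}$; absorbing the numerical factors into the constant $64$ gives the inequality.

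\medskip

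The real difficulty lies in (iv) and (v). Using $\sigma^3(\theta)=\tfrac12 h^{(3)}(\theta)$ one has $\tfrac{\sigma^3(\theta)}{2\theta}+\tfrac{h^{(4)}(\theta)}{4!}=\tfrac1{24\theta}\bigl(6h^{(3)}(\theta)+\theta h^{(4)}(\theta)\bigr)$, and applying the representation above to $\phi(u):=6\Psi_3(u)+\theta\Psi_4(u)=12\sum_j\tfrac{3(u+j)-2\theta}{(u+j)^5}$, which is positive on $[\theta,\infty)$, against the weight $-\Psi_2$ gives
\[
6h^{(3)}(\theta)+\theta h^{(4)}(\theta)=J\Bigl(\tfrac{\int_\theta^{\theta+\upalpha}\phi}{\int_\theta^{\theta+\upalpha}(-\Psi_2)}-\tfrac{\int_{\theta+\upalpha}^{\theta+\upalpha+\upbeta}\phi}{\int_{\theta+\upalpha}^{\theta+\upalpha+\upbeta}(-\Psi_2)}\Bigr).
\]
Thus (iv) reduces to showing that the weighted average of $G:=\phi/(-\Psi_2)=18\,S_4/S_3-12\theta\,S_5/S_3$ over $[\theta,\theta+\upalpha]$ strictly exceeds that over the adjacent interval $[\theta+\upalpha,\theta+\upalpha+\upbeta]$; inspecting $G'$, this comes down to an inequality among the $S_m(u)$ of the shape $12S_3S_5-9S_4^2>\theta(10S_3S_6-6S_4S_5)$, valid once $u$ exceeds a fixed multiple of $\theta$, which must then be combined with the behaviour of $G$ on the short initial stretch $u\approx\theta$ (where $G$ need not be decreasing) to compare the two averages. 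This quantitative polygamma bookkeeping is the principal obstacle. For (v) one needs, in addition, a lower bound of the right order: from $J=(1-F)(\Psi_1(\theta+\upalpha)-\Psi_1(\theta+\upalpha+\upbeta))$ and the lower bound in Lemma~\ref{poly}(ii), $\Psi_1(\theta+\upalpha)-\Psi_1(\theta+\upalpha+\upbeta)\ge\mathfrak g(\theta+\upalpha+1,\upbeta)$, which for $\upalpha\ge\upalpha_0$, $\theta\le\theta_0$ is $\ge c(\theta_0,\upalpha_0)\,\mathfrak g(\upalpha,\upbeta)$, while $1-F$ is bounded away from $0$ on this range because $\Psi_1(\theta)$ dominates the remaining terms; and the factor $G(\xi_2)-G(\xi_1)$ in the weighted-mean-value form is bounded below by a positive constant depending only on $\theta$ and $\upalpha_0$, using the quantitative monotonicity of $G$ from (iv) together with the separation of $\xi_1,\xi_2$ forced by the concentration of the weight $-\Psi_2=2S_3$. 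Choosing $\theta_0=\theta_0(\upalpha_0)$ so that the estimate of (iv) survives on $(0,\theta_0)$, and noting that the constraint there is of the form $\theta\lesssim\upalpha_0$, one may take $\theta_0\to\infty$ as $\upalpha_0\to\infty$, which gives the last assertion of (v).
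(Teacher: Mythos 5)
Your (i) and (ii) coincide with the paper's and are fine. Your (vi) — writing $h^{(k)}(\theta)=J\bigl(g_k(\xi_2)-g_k(\xi_1)\bigr)$ via the weighted mean value theorem with weight $-\Psi_2$, and then proving $g_3=3S_4/S_3$ strictly decreasing and $g_4=-12S_5/S_3$ strictly increasing via Cauchy--Schwarz on the $S_m$'s — is correct and is a genuinely different route from the paper's, which obtains $h^{(4)}(\theta)<0$ from strict convexity of $\Psi_3\circ\Psi_1^{-1}$ (again by $m$--$n$ products) and deduces $h^{(3)}(\theta)>0$ only by citing (iv); your version gets both signs directly and is arguably cleaner.

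However, (iii) as written has a genuine gap. The estimate $|h^{(k)}(\theta)|\le J\,(\xi_1-\xi_2)\,\sup_{[\xi_2,\xi_1]}|g_k'|$ with $\xi_1-\xi_2<\upalpha+\upbeta$ has the wrong order in the parameters: take $\upalpha=\upbeta=M\to\infty$ with $\theta$ fixed, then $J\lesssim\mathfrak g(\upalpha,\upbeta)\sim 1/(2M)$, $\xi_1-\xi_2<2M$, and $\sup_{[\xi_2,\xi_1]}|g_k'|$ is a $\theta$-dependent constant bounded away from $0$ (the only constraint you retain is $\xi_2>\theta$), so your bound is $O(1)$ while $64\,\mathfrak g(\upalpha,\upbeta)\,k!/\theta^{k+2}\to 0$. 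The Lipschitz bound cannot work here; within your framework the repair is to bound $|g_k(\xi_2)-g_k(\xi_1)|\le 2\sup_{u\ge\theta}|g_k(u)|\lesssim k!\,\theta^{-(k-2)}$ directly and combine with $J\lesssim\mathfrak g(\upalpha,\upbeta)(1+\theta^{-1})$. The paper avoids the mean-value representation for (iii) altogether, treating the two pieces of (\ref{fracp}) separately using part (ii) and the two-sided bound (\ref{gteta}).

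For (iv)--(v) you have the right reduction but leave the heart of the argument open. Your $G=\phi/(-\Psi_2)$ equals $-24\theta\,\Xi'/\Psi_2$ in the paper's notation, and your target inequality $12S_3S_5-9S_4^2>\theta(10S_3S_6-6S_4S_5)$ is exactly the paper's $\Xi''\Psi_2-\Xi'\Psi_3>0$; but your claim that it holds only ``once $u$ exceeds a fixed multiple of $\theta$'' is mistaken. The paper verifies it for every $x>\theta$ by reducing the $m$--$n$ product to (\ref{1xn}) and splitting into cases according to the signs of $10x_m-3x_n$ and $10x_n-3x_m$ — once that is done, $G$ is strictly decreasing on $(\theta,\infty)$ and the comparison of weighted averages over the two adjacent intervals is immediate, with no ``short initial stretch'' to handle. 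That case analysis is precisely what is missing from your proposal. For (v), the paper does not track $G(\xi_2)-G(\xi_1)$ or $1-F$; it drops two positive terms in the explicit expansion of $\sigma^3/(2\theta)+h^{(4)}(\theta)/4!$ and extracts the factor $\mathfrak g(\upalpha,\upbeta)$ directly from Lemma~\ref{poly} together with $\mathfrak g(2+\upalpha,\upbeta)/\mathfrak g(\upalpha,\upbeta)\ge(\upalpha_0/(2+\upalpha_0))^2$.
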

\begin{proof} {\it Proof of part $(i)$}. This is immediate from
  the lower bound of part $(ii)$ of Lemma \ref{poly}.

  \medskip

  \noindent  {\it Proof of part $(ii)$}.
  From the upper bound of part $(ii)$ of Lemma \ref{poly} note that

  \begin{align*}
&  |\Psi_{k}(\theta+\upalpha)-\Psi_{k}(\theta+\upalpha+\upbeta)|\le
   (k+1)!\mathfrak 
                      g(\theta+\upalpha,\upbeta)\frac{1+\theta^{-1}}{\theta^{k-1}}.
      \end{align*}
  \medskip

  \noindent {\it Proof of part $(iii)$}.
From  part $(ii)$ of Lemma \ref{poly} note that

\begin{equation}
  \label{gteta}
 g(\theta+1,\upalpha+\upbeta) \left(\frac{1}{\theta}+1\right)
\le
 \Psi_1(\theta)-\Psi_1(\theta+\upalpha+\upbeta)
\le g(\theta,\upalpha+\upbeta) \left(\frac{2}{\theta}+1\right).
  \end{equation}
  Combining this with part $(ii)$, we have that for $\theta\in (0,0.5)$,

  \begin{align}
    \nonumber
&  \frac{|  \Psi_{k-1}(\theta+\upalpha+\upbeta)-\Psi_{k-1}(\theta)|}
  {  \Psi_1(\theta)-\Psi_1(\theta+\upalpha+\upbeta)}
  \le
  (k!)\frac{1}{\theta^{k-1}}\frac{\mathfrak g(\theta,\upalpha+\upbeta)}{
                      \mathfrak g(\theta+1,\upalpha+\upbeta)}\\
    \label{2kf}
&  \le (k!)\theta^{-(k-1)} 4\theta^{-2}=4(k!)\theta^{-(k+1)}.
  \end{align}
  On the other hand, using this bound and (\ref{fracp}), we get that
  
  \begin{align}
  \nonumber 
  &|h^{(k)}(\theta)|=|\Psi_{k-1}(\theta+\upalpha)-\Psi_{k-1}(\theta+\upalpha+\upbeta)|\\
  \nonumber 
    &
               +\frac{\Psi_1(\theta+\upalpha)-\Psi_1(\theta+\upalpha+\upbeta)}{\Psi_1(\theta)-\Psi_1(\theta+\upalpha 
               +\upbeta)}\left|\Psi_{k-1}(\theta+\upalpha+\upbeta)-\Psi_{k-1}(\theta)\right|\\
 \nonumber 
 &
   \le 
|\Psi_{k-1}(\theta+\upalpha)-\Psi_{k-1}(\theta+\upalpha+\upbeta)|\\
\nonumber
  &   +\frac{\Psi_1(\upalpha)-\Psi_1(\upalpha+\upbeta)}{\Psi_1(\theta)-\Psi_1(\theta+\upalpha 
    +\upbeta)}\left|\Psi_{k-1}(\theta+\upalpha+\upbeta)-\Psi_{k-1}(\theta)\right|\\
    &
        \label{hkt}
    \le (k-1)!\mathfrak
    g(\upalpha,\upbeta)\frac{1}{\theta^{k-1}}
    +
      32\mathfrak g(\upalpha,\upbeta)
    \frac{k!}{\theta^{k+2}}\le
    64\mathfrak g(\upalpha,\upbeta)\frac{k!}{\theta^{k+2}},
  \end{align}
  where in the  second to last inequality we have used (\ref{2kf}) and
  part $(ii)$ again. This proves part $(iii)$.

  \medskip

\noindent {\it Proof of part $(iv)$}.      Dropping out the positive terms of the first two lines of the 
      above display we obtain that,

            \begin{align*}
        &\frac{\sigma^3(\theta)}{2\theta}+\frac{h^{(4)}(\theta)}{4!}          \ge 
          \frac{1}{2\theta}\left(\Psi_2(\theta+\upalpha)-\Psi_2(\theta+\upalpha+\upbeta) 
          \right) \\
        &
+        \frac{1}{4!}
  \frac{\Psi_1(\theta+\upalpha)-\Psi_1(\theta+\upalpha+\upbeta)}
          {\Psi_1(\theta)-\Psi_1(\theta+\upalpha+\upbeta)}
                    \Psi_3(\theta+\upalpha+\upbeta)\\
        &
-                    \frac{\Psi_1(\theta+\upalpha)-\Psi_1(\theta+\upalpha+\upbeta)}
          {\Psi_1(\theta)-\Psi_1(\theta+\upalpha+\upbeta)}
          \Xi(\theta)
              \\
              &\ge 
          \frac{1}{2\theta}\left(\Psi_2(\upalpha)-\Psi_2(\upalpha+\upbeta) 
                \right) 
    -
                    \frac{\Psi_1(\theta+\upalpha)-\Psi_1(\theta+\upalpha+\upbeta)}
          {\Psi_1(\theta)-\Psi_1(\theta+\upalpha+\upbeta)}
                \Xi(\theta)\\
              &\ge 
                          \frac{1}{2\theta}\left(\Psi_2(\upalpha)-\Psi_2(\upalpha+\upbeta) 
                \right) 
    +
                    \frac{\Psi_1(1+\upalpha)-\Psi_1(1+\upalpha+\upbeta)}
          {\Psi_1(\theta)-\Psi_1(\theta+\upalpha+\upbeta)}
                \left(-\Xi(\theta)                \right)\\
                           &\ge 
-2\mathfrak g(\upalpha,\upbeta)\frac{1}{\upalpha}
    +
                    \frac{\mathfrak g(2+\upalpha,\upbeta)(
                             (1+\upalpha)^{-1}) 
                             +1)}
          {\mathfrak g(1,\upalpha+\upbeta)(\theta^{-1}+1)}
                \left(-\Xi(\theta)                \right), 
            \end{align*}
            where for the last inequality we have used parts $(i)$ and 
            $(ii)$
            of Lemma \ref{poly}, and we have used the fact that

            $$
\Xi(\theta)=-\sum_{n=0}^\infty\frac{2\theta+4n}{4\theta (\theta+n)^4}<0.
            $$
            Now assume that $\upalpha_0>0$ 
            is fixed and $\upalpha\ge\upalpha_0$, $\upbeta>0$ and $\theta>0$. 
            In this case 

            $$
            \frac{\mathfrak g(2+\upalpha,\upbeta)}{\mathfrak g(\upalpha,\upbeta)}
              \ge 
              \left(\frac{\upalpha_0}{2+\upalpha_0}\right)^2=: 
                \newconstant\label{uaba}. 
            $$
            We then have that 

            $$
                    \frac{\sigma^3(\theta)}{2\theta}+\frac{h^{(4)}(\theta)}{4!}
                    \ge 
\mathfrak                    g(\upalpha,\upbeta) 
\left(                    -2 
                    \frac{1}{\upalpha_0}
                    +2\oldconstant{uaba}\frac{1}{1+\theta}
                  \left(-\theta\Xi(\theta)\right)
                  \right). 
                  $$
                  Since $-\theta\Xi(\theta)\to\infty$ as $\theta\to 0_+$,
                  from this inequality it is clear that there is a 
                  $\theta_0(\alpha_0)>0$ (depending only on 
                  $\upalpha_0$), 
                  such that for all $\upalpha\ge\upalpha_0$, 
                  $\upbeta>0$
                  and $\theta\in (0,\theta_0)$, the right-hand side is 
                  positive. Furthermore, we can also easily check
                  $\theta_0(\alpha_0)$
                  can be chosen so that
                  $\lim_{\alpha_0\to\infty}\theta_0=\infty$.
                
                  \medskip

                  \noindent {\it Proof of part $(v)$}.
The first inequality of (\ref{vii}) follows from part $(iv)$ after we prove that
$h^{(4)}(\theta)<0$.
So we will just prove that $h^{(4)}(\theta)<0$. Recall the expression
(\ref{fracp}) for $h^{(4)}(z)$.
  From this expression, we see that it is enough to show that 
  
\begin{align*} \frac{\Psi_3(\theta+\upalpha)-\Psi_3(\theta+\upalpha+\upbeta)}{\Psi_1(\theta+\upalpha)-\Psi_1(\theta+\upalpha+\upbeta)}<\frac{\Psi_3(\theta)-\Psi_3(\theta+\upalpha+\upbeta)}{\Psi_1(\theta)-\Psi_1(\theta+\upalpha+\upbeta)}. 
\end{align*}
This is equivalent to $G=\frac{\Psi_3}{\Psi_1}$ being strictly convex 
on $(0,\infty)$, that is, 
\[G''=\frac{\Psi_3''\Psi_1'-\Psi_3'\Psi_1''}{(\Psi_1')^3}\circ\Psi_1^{-1}>0\]
or, as $\Psi_1<0$, 
\[
  (\Psi_3''\Psi_1'-\Psi_3'\Psi_1'')(x)=48\bigg(3\sum_{n=
    0}^\infty\frac1{x_n^5}
  \sum_{n= 0}^\infty\frac1{x_n^4}-5\sum_{n=0}^\infty\frac1{x_n^6}\sum_{n=
    0}^\infty\frac1{x_n^3} \bigg)<0, 
\]
where we used the notation $x_n=x+n$. But this is true, since 
\begin{equation}
  \nonumber 
\sum_{n=0}^\infty \frac1{x_n^6}\sum_{n= 0}^\infty\frac1{x_n^3}-
\sum_{n=0}^\infty\frac1{x_n^5}\sum_{n=0}^\infty\frac1{x_n^4}>0, 
\end{equation}
 as can be seen by writing the $m$-$n$ products (the $m=n$ terms are cero), 
\begin{equation}
  \nonumber 
\frac1{x_m^6}\,\frac1{x_n^3}+\frac1{x_n^6}\,\frac1{x_m^3}-
\frac1{x_m^5}\frac1{x_n^4}-\frac1{x_n^5}\frac1{x_m^4}
=\frac1{x_n^6x_m^6}(x_m-x_n)(x_m^2-x_n^2). 
\end{equation}
\end{proof}

\medskip

  \subsection{Steep-descent estimates}
  \label{sde}
Let us now state three lemmas which will give the steepest-descent
properties of $h$ on $C_\theta$ and $D_\theta$.
The first one,  is a quantitative version of Lemma 5.5 of
\cite{Barr-Cor} which had
 the restriction $\upalpha=\upbeta=1$, extending it for all
 values of $\upalpha$ and $\upbeta$,
 at the cost of having to choose $\theta$ small enough
 and is one of the main challenges in the proof of Theorem \ref{one}.
 
       \medskip

       \begin{lemma}
         \label{fo}
                 For all $\upalpha>0$, $\beta>0$, 
  $\theta\in (0,\min\{0.5,0.72\times\upalpha\})$
 and $\phi 
  \in[0,2\pi]$, one has that 
  \begin{equation}
    \label{5.5a}
      \Re\left(i\theta e^{i\phi } h'\left(\theta 
      e^{i\phi}\right)\right)\ge \oldconstant{gonzalo}\theta^2\sin\phi (1-\cos\phi) 
        \mathfrak g(\upalpha+1,\upbeta),
      \end{equation}
      for some constant $\newconstant\label{gonzalo}>0$ depending only
      on $\upalpha$, but increasing on $\upalpha$.
\end{lemma}

\medskip
Numerical computations 
suggest that Lemma \ref{fo} is false for some values of $\upalpha>0$,
$\upbeta>0$ and $\theta>0$ (even $\theta\in (0,0.5)$). Nevertheless,
we
expect Theorem \ref{one} to be valid for all $\upalpha>0$, $\upbeta>0$
and $\theta>0$. This would imply that different steep descent contours
would have to be chosen.

    \medskip

    \noindent The second lemma we present is a quantitative version
    of Lemma 5.4 of \cite{Barr-Cor}.
Define for $x,y\in\mathbb R$, 

\begin{equation}
  \label{phixy}
\Phi(x,y)=\sum_{n=0}^\infty\frac{1}{(n+x)^2+y^2}. 
\end{equation}

\medskip

\begin{lemma}
\label{im}	For all $\upalpha > 0$, $\upbeta > 0$ and $\theta>0$ we have that 

\begin{itemize}
\item[(i)] $\Im h'(\theta + iy) > 0$ for $y > 0$ and $\Im h'(\theta + iy) < 0$ for $y < 0$. 
\item[(ii)] $\Im h'(\theta + iy)=yH(\theta,y,\upalpha,\upbeta)$ where

  \begin{equation}
    \label{hthetay}
  H(\theta,y,\upalpha,\upbeta)\ge8\frac{\Psi_1(\theta+\upalpha)-\Psi_1(\theta+\upalpha+ \upbeta)}{
    \Psi_1(\theta)-\Psi_1(\theta+\upalpha+ \upbeta)}
  \int_\theta^{\theta+\upalpha}\sum_{n\ge0}\frac{y^2(x-\theta)dx}{((x+n)^2+y^2)^3}.
\end{equation}
  \end{itemize}

\end{lemma}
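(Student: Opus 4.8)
The plan is to compute $\Im h'(\theta+iy)$ in closed form, reduce the estimate (\ref{hthetay}) to a single pointwise inequality on $(0,\infty)$, and prove that inequality by a symmetrization trick.

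\textbf{Step 1 (the factorization).} Differentiating (\ref{hz}) with $\frac{d}{dz}\log\Gamma=\Psi$ gives
\[
h'(z)=I(x(\theta))+\tfrac{1-x(\theta)}{2}\bigl(\Psi(z+\upalpha)-\Psi(z)\bigr)+\tfrac{1+x(\theta)}{2}\bigl(\Psi(z+\upalpha)-\Psi(z+\upalpha+\upbeta)\bigr).
\]
Since $\Im\Psi(a+iy)=\sum_{n\ge0}\frac{y}{(n+a)^2+y^2}=y\,\Phi(a,y)$ for real $a,y$ (from the Mittag--Leffler series for $\Psi$ and (\ref{phixy})) and $I(x(\theta))\in\R$, one obtains
\[
\Im h'(\theta+iy)=y\Bigl[\tfrac{1-x(\theta)}{2}\bigl(\Phi(\theta+\upalpha,y)-\Phi(\theta,y)\bigr)+\tfrac{1+x(\theta)}{2}\bigl(\Phi(\theta+\upalpha,y)-\Phi(\theta+\upalpha+\upbeta,y)\bigr)\Bigr]=:yH,
\]
which is the factorization of part (ii). From (\ref{xtheta}) one checks $\tfrac{1-x(\theta)}{2}=p/r$ and $\tfrac{1+x(\theta)}{2}=q/r$, where $p:=\Psi_1(\theta+\upalpha)-\Psi_1(\theta+\upalpha+\upbeta)$, $q:=\Psi_1(\theta)-\Psi_1(\theta+\upalpha)$, $r:=p+q>0$.

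\textbf{Step 2 (reduction to a pointwise inequality).} Put $\hat P(x):=\sum_{m\ge0}(m+x)^{-3}=-\tfrac12\Psi_2(x)$ and $\hat Q(x):=\sum_{n\ge0}\frac{n+x}{((n+x)^2+y^2)^2}=-\tfrac12\partial_x\Phi(x,y)$; both are positive on $(0,\infty)$, $\hat P$ is decreasing, and $\Psi_1(a)-\Psi_1(b)=2\int_a^b\hat P$, $\Phi(a,y)-\Phi(b,y)=2\int_a^b\hat Q$. Hence, writing $R:=\hat Q/\hat P$,
\[
rH=4\int_\theta^{\theta+\upalpha}\int_{\theta+\upalpha}^{\theta+\upalpha+\upbeta}\hat P(u)\hat P(v)\bigl(R(v)-R(u)\bigr)\,dv\,du.
\]
If $R$ is nondecreasing, then for $u\le\theta+\upalpha\le v$ we have $R(v)-R(u)\ge R(\theta+\upalpha)-R(u)=\int_u^{\theta+\upalpha}R'$; integrating out $v$ ($\int_{\theta+\upalpha}^{\theta+\upalpha+\upbeta}\hat P=p/2$), applying Fubini in the remaining variables, and using $\int_\theta^w\hat P\ge(w-\theta)\hat P(w)$ gives
\[
rH\ \ge\ 2p\int_\theta^{\theta+\upalpha}R'(w)\Bigl(\int_\theta^w\hat P\Bigr)\,dw\ \ge\ 2p\int_\theta^{\theta+\upalpha}R'(w)\hat P(w)(w-\theta)\,dw.
\]
Therefore the bound $H\ge 8(p/r)\int_\theta^{\theta+\upalpha}\sum_{n\ge0}\frac{y^2(x-\theta)\,dx}{((x+n)^2+y^2)^3}$ in (\ref{hthetay}) follows once we establish
\begin{equation}
R'(w)\,\hat P(w)\ \ge\ 4\sum_{n\ge0}\frac{y^2}{((n+w)^2+y^2)^3},\qquad w>0. \tag{$\star$}
\end{equation}
As $(\star)$ also forces $R'>0$, the monotonicity of $R$ used above is not assumed circularly.

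\textbf{Step 3 (proof of $(\star)$ --- the main obstacle).} Since $R'\hat P^2=\hat Q'\hat P-\hat Q\hat P'$, $(\star)$ is equivalent to $\hat Q'(w)\hat P(w)-\hat Q(w)\hat P'(w)\ge 4\hat P(w)\sum_n\frac{y^2}{((n+w)^2+y^2)^3}$. Using $\hat Q'(w)=\sum_n\frac{y^2-3(n+w)^2}{((n+w)^2+y^2)^3}$ and $\hat P'(w)=-3\sum_m(m+w)^{-4}$, one expands everything as double sums over $(m,n)$; the numerators $y^2-3(n+w)^2$ and $-4y^2$ combine to $-3\bigl((n+w)^2+y^2\bigr)$, which cancels one power in the denominator, and after incorporating the $\hat Q\hat P'$ term an algebraic simplification yields
\[
\hat Q'(w)\hat P(w)-\hat Q(w)\hat P'(w)-4\hat P(w)\sum_n\frac{y^2}{((n+w)^2+y^2)^3}=3\sum_{m,n\ge0}\frac{n-m}{((n+w)^2+y^2)^2\,(m+w)^4}.
\]
Symmetrizing in $m\leftrightarrow n$ and factoring, with $a=n+w$, $b=m+w$, the bracket $\frac{1}{(a^2+y^2)^2b^4}-\frac{1}{(b^2+y^2)^2a^4}$ equals $\frac{y^2(a^2-b^2)\bigl(2a^2b^2+y^2(a^2+b^2)\bigr)}{a^4b^4(a^2+y^2)^2(b^2+y^2)^2}$, which has the sign of $n-m$; hence each symmetrized summand $\tfrac12(n-m)\bigl[\cdots\bigr]\ge0$, proving $(\star)$. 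Combining Steps~1--3 gives part~(ii), and part~(i) is then immediate: the right-hand side of (\ref{hthetay}) is strictly positive for $y\ne0$, so $H>0$ and $\Im h'(\theta+iy)=yH$ has the sign of $y$. (All the series and integrals converge absolutely, with $n$-th terms $O(n^{-3})$, which legitimizes differentiation under the summation sign and the applications of Fubini.)
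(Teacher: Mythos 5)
Your proof is correct, and it reduces to the same crucial pointwise inequality as the paper --- namely that $\sum_{m,n\ge0}\tfrac{n-m}{((n+w)^2+y^2)^2(m+w)^4}\ge0$, established by the same $m\leftrightarrow n$ symmetrization. The only difference is one of packaging: the paper routes the reduction through the auxiliary Lemma~\ref{integral_doble} (concavity of $\Phi(\cdot,y)\circ\Psi_1^{-1}$, integrated twice), whereas you make the same reduction explicit via the identity $rH=4\iint\hat P(u)\hat P(v)\bigl(R(v)-R(u)\bigr)\,dv\,du$ together with $\int_\theta^w\hat P\ge(w-\theta)\hat P(w)$, arriving at $(\star)$, which after multiplying out is exactly the paper's inequality $(\Phi''\Psi_1'-\Phi'\Psi_1'')(x)\ge-8y^2\Psi_1'(x)\sum_n(x_n^2+y^2)^{-3}$.
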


\medskip

Finally, the third lemma, is a requirement that has been used in the
latest version of \cite{Barr-Cor}, and which in our case gives the biggest
limitation
on the range of values of $\upalpha,\upbeta$ and $\theta$ for which we
can prove Theorem \ref{one}.

\medskip 

\begin{lemma}
  \label{iv}
 For all $\upalpha>0$, $\upbeta>0$ and $\theta\in (0,\min\{0.5,\upalpha\})$, we have 
  that 

  $$
  \frac{\sigma^3(\theta)}{2\theta}+\frac{h^{(4)}(\theta)}{4!}>0. 
  $$
\end{lemma}

\medskip Numerical computations also suggest that Lemma \ref{one} is
false for some values of $\upalpha>0$, $\upbeta>0$ and $\theta\in
(0,0.5)$.
Again, we expect Theorem \ref{one} to hold despite this, but
different steep descent contours would have to be chosen.
We will present the proof of Lemma \ref{one} in Section \ref{sp00}.

We will present the proofs of Lemmas \ref{fo} and \ref{im}
in Section \ref{lemmasfo}.
  Now we will continue developing several
consequences of these lemmas.
From Lemma \ref{fo} we obtain the following
corollary which extends Lemma 5.5 of \cite{Barr-Cor}
for arbitrary $\upalpha>0$ and $\upbeta>0$.

In what follows we will say that
the contour $C_\theta$ is steep-descent 
  for the function $-\Re(h)$  if $\Re(h(\theta 
  e^{i\phi}))$ is strictly increasing for $\phi\in (0,\pi)$ and 
  strictly decreasing for $(-\pi,0)$.
\medskip

\begin{corollary}
  \label{cor1}
                 For all $\upalpha>0$, $\beta>0$ and 
  $\theta\in (0,\min\{0.5,0.72\times\upalpha\})$
 the contour $C_\theta$ is steep-descent for $-\Re(h)$.
\end{corollary}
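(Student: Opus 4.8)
The plan is to deduce Corollary \ref{cor1} directly from Lemma \ref{fo}. Recall that $C_\theta$ being steep-descent for $-\Re(h)$ means that $\phi\mapsto \Re\bigl(h(\theta e^{i\phi})\bigr)$ is strictly increasing on $(0,\pi)$ and strictly decreasing on $(-\pi,0)$. First I would parametrize the circle by $\phi$ and compute the derivative of this function along the contour. Writing $z=\theta e^{i\phi}$, the chain rule gives
\[
\frac{d}{d\phi}\Re\bigl(h(\theta e^{i\phi})\bigr)
=\Re\!\left(h'(\theta e^{i\phi})\cdot i\theta e^{i\phi}\right),
\]
which is exactly the quantity bounded below in \eqref{5.5a}. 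So the derivative equals $\Re\bigl(i\theta e^{i\phi}h'(\theta e^{i\phi})\bigr)$, and Lemma \ref{fo} tells us
\[
\frac{d}{d\phi}\Re\bigl(h(\theta e^{i\phi})\bigr)
\ge \oldconstant{gonzalo}\,\theta^2\sin\phi\,(1-\cos\phi)\,\mathfrak g(\upalpha+1,\upbeta).
\]

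Next I would observe that the right-hand side is strictly positive precisely for $\phi\in(0,\pi)$: the factor $\mathfrak g(\upalpha+1,\upbeta)=\upbeta/\bigl((\upalpha+1)(\upalpha+1+\upbeta)\bigr)$ is positive for all $\upalpha,\upbeta>0$, the constant $\oldconstant{gonzalo}>0$ by Lemma \ref{fo}, $\sin\phi>0$ on $(0,\pi)$, and $1-\cos\phi>0$ on $(0,\pi)$ (it vanishes only at $\phi=0$, which is an endpoint). Hence the derivative is strictly positive on the open interval $(0,\pi)$, so $\phi\mapsto\Re\bigl(h(\theta e^{i\phi})\bigr)$ is strictly increasing there. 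For the interval $(-\pi,0)$, I would either invoke the same bound \eqref{5.5a} with $\phi\in(-\pi,0)$, where now $\sin\phi<0$ while $1-\cos\phi>0$, forcing the derivative to be $\le$ a strictly negative quantity and hence strictly negative; or, more cleanly, use the conjugation symmetry $h(\bar z)=\overline{h(z)}$ of $h$ (which follows from \eqref{hz}, since the Gamma-function and $I(x(\theta))$, $x(\theta)$ are real), so that $\Re h(\theta e^{-i\phi})=\Re h(\theta e^{i\phi})$, and the monotonicity on $(-\pi,0)$ is the mirror image of that on $(0,\pi)$. Either way the contour $C_\theta$ is steep-descent for $-\Re(h)$, which is the claim.

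There is essentially no obstacle here: the content of the corollary is entirely carried by Lemma \ref{fo}, and the only thing to check is the elementary identification of $\frac{d}{d\phi}\Re h(\theta e^{i\phi})$ with $\Re\bigl(i\theta e^{i\phi}h'(\theta e^{i\phi})\bigr)$ together with the sign analysis of $\sin\phi(1-\cos\phi)$ on $(0,\pi)$ and $(-\pi,0)$. If anything requires a remark, it is that the bound in \eqref{5.5a} is stated for $\phi\in[0,2\pi]$, so one should note that the range $(\pi,2\pi)$ corresponds to $(-\pi,0)$ via $\phi\mapsto\phi-2\pi$, making the two cases uniform; and that the constraint $\theta\in(0,\min\{0.5,0.72\times\upalpha\})$ is inherited verbatim from Lemma \ref{fo}, so no additional hypotheses are needed.
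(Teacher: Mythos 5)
Your derivation correctly identifies $\frac{d}{d\phi}\Re h(\theta e^{i\phi})$ with $\Re\bigl(i\theta e^{i\phi}h'(\theta e^{i\phi})\bigr)$ and applies Lemma~\ref{fo}, which is exactly the route the paper intends (the paper states the corollary as following from that lemma without further elaboration), and your sign analysis on $(0,\pi)$ is correct. However, the first alternative you propose for $\phi\in(-\pi,0)$ is flawed: inequality~\eqref{5.5a} is a \emph{lower} bound, so when $\sin\phi<0$ the right-hand side is negative and the inequality becomes vacuous; it does not ``force the derivative to be $\le$ a strictly negative quantity,'' since nothing in the lemma gives an upper bound. Your second alternative is the correct fix: since $h(\bar z)=\overline{h(z)}$ (from \eqref{hz}, because $I(x(\theta))$ and $x(\theta)$ are real and $\Gamma(\bar z)=\overline{\Gamma(z)}$), one has $h'(\bar z)=\overline{h'(z)}$ and hence $\Re\bigl(i\theta e^{-i\phi}h'(\theta e^{-i\phi})\bigr)=-\Re\bigl(i\theta e^{i\phi}h'(\theta e^{i\phi})\bigr)$, which gives the strict decrease on $(-\pi,0)$ directly from the strict increase on $(0,\pi)$. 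Retaining only that second argument, the proof is complete and coincides with the paper's intended deduction.
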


\medskip

\noindent On the other hand, exactly as in \cite{Barr-Cor}, from
part $(i)$ of Lemma \ref{im}, we obtain Lemma 5.4 of \cite{Barr-Cor},
which we state as the following corollary for convenience.

\medskip

\begin{corollary}
\label{corab}  For all $\upalpha>0$, $\upbeta>0$ and $\theta>0$,
  the contour $D_\theta$ is steep-descent for the function $\Re(h)$
  in the sense that $\Re(h(\theta+iy))$ is strictly decreasing for $y$ positive
  and
strictly   increasing for $y$ negative.
 \end{corollary}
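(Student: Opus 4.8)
The plan is to deduce the statement directly from part $(i)$ of Lemma \ref{im} by a one-line chain-rule computation, exactly along the lines of Lemma 5.4 of \cite{Barr-Cor}. First I would set $g(y):=\Re\bigl(h(\theta+iy)\bigr)$ for $y\in\mathbb R$. Since $\theta>0$ and $\upalpha,\upbeta>0$, each of the arguments $z$, $\upalpha+z$ and $\upalpha+\upbeta+z$ occurring in the definition (\ref{hz}) of $h$ has strictly positive real part along the vertical line $D_\theta$, so $h$ is holomorphic in an open neighbourhood of $D_\theta$ and $g$ is differentiable there.

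Next I would differentiate: from $\frac{d}{dy}h(\theta+iy)=i\,h'(\theta+iy)$ and the identity $\Re(iw)=-\Im w$ one gets
\[
g'(y)=\Re\bigl(i\,h'(\theta+iy)\bigr)=-\,\Im\bigl(h'(\theta+iy)\bigr).
\]
Then I would invoke part $(i)$ of Lemma \ref{im}: for $y>0$ we have $\Im h'(\theta+iy)>0$, hence $g'(y)<0$ and $g$ is strictly decreasing on $(0,\infty)$; for $y<0$ we have $\Im h'(\theta+iy)<0$, hence $g'(y)>0$ and $g$ is strictly increasing on $(-\infty,0)$. This is precisely the asserted steep-descent property of $\Re(h)$ on the contour $D_\theta$, so the proof is complete.

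I do not expect any genuine obstacle here. Once Lemma \ref{im}$(i)$ is available the argument is immediate, and the only point that needs a word of care is the holomorphy of $h$ along $D_\theta$, which is handled by $\theta>0$. All the substance — the sign of $\Im h'(\theta+iy)$, which ultimately rests on the polygamma expansions of Lemma \ref{poly} and on the positivity of $\Psi_1(\theta+\upalpha)-\Psi_1(\theta+\upalpha+\upbeta)$ relative to $\Psi_1(\theta)-\Psi_1(\theta+\upalpha+\upbeta)$ — has already been packaged into Lemma \ref{im}, whose proof is deferred to Section \ref{sp1}.
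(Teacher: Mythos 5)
Your argument is exactly the paper's: Corollary \ref{corab} is stated as an immediate consequence of Lemma \ref{im}$(i)$, and the one-line chain-rule computation $\frac{d}{dy}\Re\bigl(h(\theta+iy)\bigr)=\Re\bigl(i\,h'(\theta+iy)\bigr)=-\Im\bigl(h'(\theta+iy)\bigr)$ together with the sign of $\Im h'(\theta+iy)$ from that lemma is precisely what the paper has in mind. Your additional remark on holomorphy of $h$ along $D_\theta$ under condition (\ref{tm}) is a sensible detail, and there is nothing further to add.
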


 \medskip

 We will now need to modify the vertical line contour $D_\theta$
 so that it avoids the singularity at $\theta$. This will be a time
 dependent
 modification. For each $r>0$ define,

 $$
D_{\theta}^{r,+}:=D_\theta-B\left(\theta, r\right), 
$$

 $$
V_{\theta}^{r,+}:=V^\epsilon_\theta-B\left(\theta, r\right), 
$$
 the semicircle

$$
S_\theta^t:=\left\{z\in\mathbb C: |z-\theta|=\frac{1}{\sigma(\theta) t^{1/3}}, \Re
  (z-\theta)\ge 0\right\}
$$
and the contour

$$
D_{\theta}^t:=D_{\theta}^{\sigma^{-1}t^{-1/3},+}\cup S_\theta^t.
$$
See Figure \ref{figure3} for a representation of these contours
together
with the  contour $V_\theta^\epsilon$. The presence of the factor $\sigma^{-1}$
in the
definition of $D_\theta^t$ will be irrelevant in the proof of Theorem
\ref{one},
since it will only change the computations by a fixed
constant. Nevertheless,
it will be important in the proof of Theorem \ref{int-dis}, where
$\sigma(\theta)\to 0$ as $t\to\infty$.
In what follows we adopt the notation $\sigma_t=\sigma_t(\theta)$
to admit the possibility that $\sigma$ is time dependent because it
is a function of time dependent parameters $(\upalpha_t,\upbeta_t)$.

\begin{figure}[!h]

\begin{tikzpicture}[decoration={markings, 
mark=at position 7cm with {\arrow[line width=2pt]{>}}
}
]
\draw[help lines,->] (8,0) -- (14,0) coordinate (xaxis); 

\draw[black, dashed, thin] (10,0) circle (3.1 cm);


\path[draw,line width=1.5pt,postaction=decorate]
(8.596255,-3.856725) arc (-40:-30:6)-- (10,0) node[below right] {$\theta$} 
--(9.19615242,3) 
arc(30:40:6);

\path[draw, line width=1.5pt,postaction=decorate] (10,-4)--(10,-0.8)
arc(-90:90:0.8)
--
(10,4);

\node at (5,2.5) {$\ $}; 
\node at (9.2,1.5) {$V^\epsilon_\theta$}; 
\node at (11,0.7) {$D_\theta^t$};
\node at (11.7,-0.3) {$\theta+\frac{1}{\sigma t^{1/3}}$};

\node at (10.8,0) {$\bullet$};
\node at (10,0) {$\bullet$};

\node at (13.1,0) {$\bullet$};
\node at (13.6,-0.3) {$\theta+\epsilon$}; 
\end{tikzpicture}

\caption{The contours $V_\theta^\epsilon$  and $D_\theta^t$ with its
  $t$
dependent deformation to avoid the singularity at $\theta$.}
\label{figure3}
\end{figure}
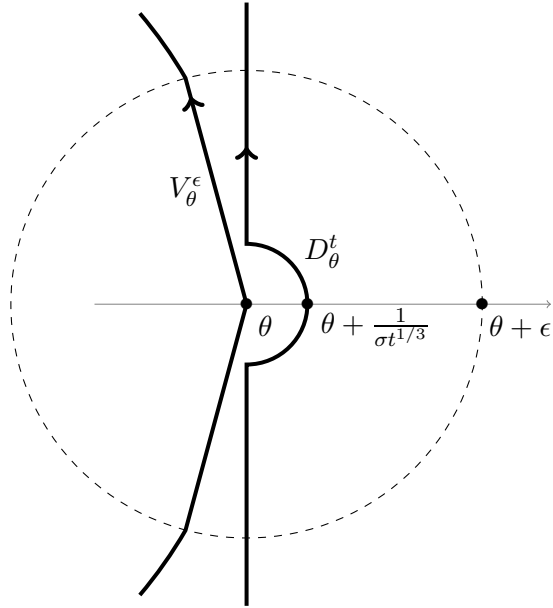

\medskip

\begin{lemma}
  \label{taylor1} For all $\upalpha>0$, $\upbeta>0$, $\theta>0$
  and $t\ge t_0$, where $\sigma_{t_0} t_0^{1/3}\theta\ge 2$, 
  we have that for all $z\in D_\theta^t$,

  $$
Re(h(z)-h(\theta))\le 128\mathfrak 
g(\upalpha,\upbeta)\frac{1}{\theta^5\sigma^3 t}. 
$$
\end{lemma}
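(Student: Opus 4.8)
The plan is to derive the estimate from a third-order Taylor expansion of $h$ at its critical point $\theta$, using $h'(\theta)=h''(\theta)=0$ (noted right after (\ref{hz})) together with a bound on $h^{(3)}$ along the relevant segments. Since $h$ is holomorphic on $\{\Re z>0\}$ by (\ref{hz}), and every segment $[\theta,z]$ with $z$ on the semicircle $S_\theta^t\subset D_\theta^t$ lies in that half-plane, Taylor's formula with integral remainder gives
\[
h(z)-h(\theta)=\frac{(z-\theta)^3}{2}\int_0^1(1-s)^2\,h^{(3)}\!\big(\theta+s(z-\theta)\big)\,ds,\qquad |h(z)-h(\theta)|\le\frac{|z-\theta|^3}{6}\sup_{w\in[\theta,z]}\big|h^{(3)}(w)\big|.
\]
On $S_\theta^t$ we have $|z-\theta|=(\sigma(\theta)t^{1/3})^{-1}$ and $\Re(z-\theta)\ge0$; since $\sigma_tt^{1/3}=(t\sigma_t^3)^{1/3}\to\infty$ (cf. (\ref{limti})), the hypothesis $\sigma_{t_0}t_0^{1/3}\theta\ge2$ can be taken to give $(\sigma(\theta)t^{1/3})^{-1}\le\theta/2$ for all $t\ge t_0$, so every $w\in[\theta,z]$ satisfies $\theta\le\Re w\le\tfrac{3}{2}\theta$. (It is on this piece $S_\theta^t$ that the estimate is needed; on the complementary part of $D_\theta^t$ the integrand is controlled by the steep-descent Corollary \ref{corab}.)

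The crucial input is a complex-argument version of Corollary \ref{corcor}$(iii)$ for $k=3$: for every $w$ with $\Re w\ge\theta$,
\[
\big|h^{(3)}(w)\big|\le 64\,\mathfrak g(\upalpha,\upbeta)\,\frac{3!}{(\Re w)^5}\le 64\,\mathfrak g(\upalpha,\upbeta)\,\frac{3!}{\theta^5}.
\]
To obtain it, differentiating (\ref{hz}) three times gives, for any $z$,
\[
h^{(3)}(z)=\big(\Psi_{2}(z+\upalpha)-\Psi_{2}(z+\upalpha+\upbeta)\big)+A\,\big(\Psi_{2}(z+\upalpha+\upbeta)-\Psi_{2}(z)\big),
\]
with the constant $A:=\tfrac{1-x(\theta)}{2}=\tfrac{\Psi_1(\theta+\upalpha)-\Psi_1(\theta+\upalpha+\upbeta)}{\Psi_1(\theta)-\Psi_1(\theta+\upalpha+\upbeta)}$ (the identity follows from (\ref{xtheta}), and $0<A<1$ since $x(\theta)\in(-1,1)$). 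From $\Psi_m(\zeta)=(-1)^{m+1}m!\sum_{j\ge0}(\zeta+j)^{-(m+1)}$ and $|\zeta+j|\ge\Re\zeta+j$ one gets the elementary inequality $|\Psi_m(w+a)-\Psi_m(w+b)|\le|\Psi_m(\Re w+a)-\Psi_m(\Re w+b)|$ for $a,b\ge0$; applying it to the two differences above reduces the bound to the real point $r=\Re w$ (with $\theta\le r\le\tfrac{3}{2}\theta$), where one reruns the chain of estimates (\ref{2kf})--(\ref{hkt}) from the proof of Corollary \ref{corcor}$(iii)$ with $\theta$ replaced by $r$ — bounding $A$ by $\tfrac{\Psi_1(\upalpha)-\Psi_1(\upalpha+\upbeta)}{\Psi_1(r)-\Psi_1(r+\upalpha+\upbeta)}$, using that $x\mapsto\Psi_1(x)-\Psi_1(x+\upalpha+\upbeta)$ is decreasing so the ratio with $\theta$ is dominated by the one with $r\ge\theta$ — together with Lemma \ref{poly}$(ii)$ and the monotonicity of $\mathfrak g(\cdot,\upbeta)$.

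Combining the displays with $|z-\theta|=(\sigma t^{1/3})^{-1}$ on $S_\theta^t$,
\[
|h(z)-h(\theta)|\le\frac16\cdot\frac{1}{\sigma^3t}\cdot 64\,\mathfrak g(\upalpha,\upbeta)\,\frac{3!}{\theta^5}=\frac{64\,\mathfrak g(\upalpha,\upbeta)}{\theta^5\sigma^3t}\le\frac{128\,\mathfrak g(\upalpha,\upbeta)}{\theta^5\sigma^3t},
\]
with a comfortable factor-$2$ margin. I expect the only real obstacle to be the complex-argument extension of Corollary \ref{corcor}$(iii)$: one must verify that the full polygamma computation behind that corollary — in particular the lower bound on the denominator $\Psi_1(\theta)-\Psi_1(\theta+\upalpha+\upbeta)$ and the emergence of the factor $\mathfrak g(\upalpha,\upbeta)\theta^{-5}$ — survives the replacement of the real critical point $\theta$ by a complex $w$ with $\Re w\ge\theta$, which is exactly what the inequality $|\Psi_m(w+a)-\Psi_m(w+b)|\le|\Psi_m(\Re w+a)-\Psi_m(\Re w+b)|$ and the monotonicity of $x\mapsto\Psi_1(x)-\Psi_1(x+\upalpha+\upbeta)$ make possible.
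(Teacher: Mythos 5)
Your proof is correct, but it takes a genuinely different route from the paper's. The paper handles the semicircle $S_\theta^t$ by expanding $h$ as its full Taylor series at the real critical point $\theta$,
\[
|h(z)-h(\theta)|\le\sum_{k=3}^{\infty}\frac{1}{k!}\frac{|h^{(k)}(\theta)|}{(\sigma t^{1/3})^{k}},
\]
then invoking Corollary~\ref{corcor}$(iii)$ termwise and summing the resulting geometric series; the hypothesis $\sigma_{t_0}t_0^{1/3}\theta\ge 2$ is used precisely to make that series ratio $\le 1/2$ (and, implicitly, to stay inside the radius of convergence), which is where the factor $128=64\times 2$ comes from. You instead truncate at third order with the integral remainder, which replaces an infinite sum of real-argument bounds by a single complex-argument bound $\sup_{w\in[\theta,z]}|h^{(3)}(w)|$, and this forces you to prove a complex extension of Corollary~\ref{corcor}$(iii)$ — the inequality $|\Psi_m(w+a)-\Psi_m(w+b)|\le|\Psi_m(\Re w+a)-\Psi_m(\Re w+b)|$, proved by writing the difference as $(m+1)\int_a^b(w+u+j)^{-(m+2)}\,du$ and using $|w+u+j|\ge\Re w+u+j$, plus the observation that $A=\tfrac{1-x(\theta)}{2}$ is a fixed constant in $h^{(3)}(z)$ and that $x\mapsto\Psi_1(x)-\Psi_1(x+c)$ is decreasing. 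This is a correct and self-contained argument, and it even improves the constant to $64$ with a factor-two margin; the tradeoff is that you need a new complex-variable lemma that the paper avoids by only ever differentiating at the real point $\theta$ and letting the convergent series absorb the tail. Both methods appeal to Corollary~\ref{corab} for $z\in D_\theta\setminus B(\theta,\sigma^{-1}t^{-1/3})$, where one should really interpret the lemma as the one-sided bound $\Re(h(z)-h(\theta))\le 128\,\mathfrak g(\upalpha,\upbeta)/(\theta^5\sigma^3 t)$, since the two-sided bound on $|h(z)-h(\theta)|$ cannot hold uniformly on the unbounded vertical rays.
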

\begin{proof}
  For $z\in D_\theta-B(\theta,\sigma^{-1} t^{-1/3})$, the lemma
  follows from Corollary \ref{corab}. For $z\in D_\theta^t\cap B(\theta,\sigma^{-1}
  t^{-1/3})$
  we have that, for $t\ge t_0$ where $t_0$ is such that $\sigma 
      t^{1/3}_0\theta\ge 2$, 

      \begin{align*}
&|h(z)-h(\theta)|\le \sum_{k=3}^\infty \frac{1}{k!}\frac{1}{\sigma^k 
  t^{k/3}} |h^{(k)}(\theta)|
\le 
64 \mathfrak g(\upalpha,\upbeta) 
\sum_{k=3}^\infty \frac{1}{\sigma^k 
  t^{k/3}\theta^{k+2}}\\
&\le 
64 \mathfrak g(\upalpha,\upbeta) \frac{1}{\theta^5}\frac{1}{\sigma^3 t}
\sum_{k=0}^\infty \frac{1}{\sigma^k 
                           t^{k/3}\theta^{k}}=128 
                           \mathfrak g(\upalpha,\upbeta) 
                           \frac{1}{\theta^5\sigma^3 t}, 
      \end{align*}
      where we have used part $(iii)$ of Corollary \ref{corcor}.

  \end{proof}

\begin{corollary}
  \label{c10}
               For all $\upalpha>0$, $\beta>0$ and 
  $\theta\in (0,\min\{0.5,0.72\times\upalpha\})$
 the following are satisfied.

    \begin{itemize}

    \item[(i)] There is a constant $\newconstant\label{7cc}>0$
      such that for every $z\in D_{\theta}^{\epsilon,+}$ and $v\in C_\theta$ we 
    have that 

    $$
    \Re(h(z)-h(v)) 
    \le     -
\oldconstant{7cc} \mathfrak h(\upalpha,\upbeta,\theta)
\epsilon^4, 
$$
where $\mathfrak h(\upalpha,\upbeta,\theta)>0$ and for $\upalpha\ge 1$
and $\upbeta\ge 1$ we have that

\begin{equation}
  \label{hfrakt}
  \mathfrak
h(\upalpha,\upbeta,\theta)\ge \oldconstant{80}(\theta)
\mathfrak g(\upalpha,\upbeta),
\end{equation}
for some
$\newconstant\label{80}(\theta)>0$
which does not depend on $\upalpha$ or $\upbeta$.

\item[(ii)] There is a constant $\newconstant\label{9cc}>0$ such that for every $z\in D_\theta^t$
    and $v\in C_\theta^{\epsilon,+}=V_{\theta}^{\epsilon,+}$
 we 
    have that 

    $$
    \Re(h(z)-h(v)) 
    \le     -\oldconstant{9cc}
\epsilon^4 \mathfrak g(\upalpha+1,\upbeta) 
      +128 
                           \mathfrak g(\upalpha,\upbeta) 
                           \frac{1}{\theta^5\sigma^3 t}.
$$
\end{itemize}
\end{corollary}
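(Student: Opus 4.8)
The plan is to deduce both inequalities from the quantitative steep-descent estimates of Lemmas \ref{fo} and \ref{im} by integrating the relevant derivative bounds along $C_\theta$ and $D_\theta$, using the qualitative monotonicity in Corollaries \ref{cor1} and \ref{corab} to cover the parts of the contours far from $\theta$, and invoking Lemma \ref{taylor1} on the time-dependent arc $D_\theta^t$. Throughout one uses $h(\bar z)=\overline{h(z)}$, so that $h(\theta)\in\mathbb R$, $\Re h(\theta+iy)$ is even in $y$, and $\Re h(\theta e^{i\phi})$ is even in $\phi$; and one works in the range $\theta\in(0,\min\{0.5,0.72\,\upalpha\})$, where the hypotheses of Corollary \ref{cor1} and Lemma \ref{fo} hold.

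For part $(i)$: for every $v\in C_\theta$, Corollary \ref{cor1} gives $\Re h(v)\ge\Re h(\theta)=h(\theta)$. For $z=\theta+iy$ with $0<y\le\epsilon$ (recall $\epsilon\in(0,\theta/2)$, so $\epsilon<\theta$) one integrates $\frac{d}{dy}\Re h(\theta+iy)=-\Im h'(\theta+iy)=-yH(\theta,y,\upalpha,\upbeta)$ from Lemma \ref{im}$(ii)$; since $(x+n)^2+y^2\le 2(x+n)^2$ on the range $\theta\le x\le\theta+\upalpha$ of the inner integral,
\[
h(\theta)-\Re h(\theta+iy)=\int_0^y sH(\theta,s,\upalpha,\upbeta)\,ds\ \ge\ \frac{y^4}{4}\,R(\theta,\upalpha,\upbeta)\,J(\theta,\upalpha),
\]
with $R(\theta,\upalpha,\upbeta):=(\Psi_1(\theta+\upalpha)-\Psi_1(\theta+\upalpha+\upbeta))/(\Psi_1(\theta)-\Psi_1(\theta+\upalpha+\upbeta))>0$ and $J(\theta,\upalpha):=\int_\theta^{\theta+\upalpha}\sum_{n\ge0}(x-\theta)(x+n)^{-6}\,dx>0$. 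By monotonicity of $\Re h(\theta+iy)$ in $y>0$ (Corollary \ref{corab}) and evenness in $y$, the estimate at $y=\epsilon$ extends to all $|y|\ge\epsilon$, so $\Re(h(z)-h(v))\le-\tfrac14 R(\theta,\upalpha,\upbeta)J(\theta,\upalpha)\,\epsilon^4$ for $z\in D_\theta^{\epsilon,+}$, $v\in C_\theta$; thus take $\oldconstant{7cc}=1$ and $\mathfrak h(\upalpha,\upbeta,\theta):=\tfrac14 R(\theta,\upalpha,\upbeta)J(\theta,\upalpha)$. For (\ref{hfrakt}) when $\upalpha,\upbeta\ge1$, bound $R$ below via $\Psi_1(\theta+\upalpha)-\Psi_1(\theta+\upalpha+\upbeta)\ge\mathfrak g(\theta+\upalpha+1,\upbeta)$ (Corollary \ref{corcor}$(i)$ at $\theta+\upalpha$) and $\Psi_1(\theta)-\Psi_1(\theta+\upalpha+\upbeta)\le\Psi_1(\theta)\le(1+\theta)\theta^{-2}$ (Lemma \ref{poly}$(i)$), compare $\mathfrak g(\theta+\upalpha+1,\upbeta)$ with $\mathfrak g(\upalpha,\upbeta)$ (their ratio stays bounded below by a universal constant for $\upalpha,\upbeta\ge1$, $\theta<1/2$), and use $J(\theta,\upalpha)\ge J(\theta,1)>0$; this produces $\oldconstant{80}(\theta)$ depending only on $\theta$.

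For part $(ii)$: for $v=\theta e^{i\phi}\in C_\theta^{\epsilon,+}=V_\theta^{\epsilon,+}$ one has $\epsilon\le|v-\theta|=2\theta|\sin(\phi/2)|$, hence $1-\cos\phi=2\sin^2(\phi/2)\ge\epsilon^2/(2\theta^2)$. Integrating Lemma \ref{fo} along $C_\theta$ (via $\frac{d}{d\phi}\Re h(\theta e^{i\phi})=\Re(i\theta e^{i\phi}h'(\theta e^{i\phi}))$, using evenness in $\phi$ for $\phi<0$) together with $\int_0^\phi\sin\psi(1-\cos\psi)\,d\psi=\tfrac12(1-\cos\phi)^2$ gives
\[
\Re h(v)-h(\theta)\ \ge\ \tfrac12\,\oldconstant{gonzalo}\,\theta^2\,\mathfrak g(\upalpha+1,\upbeta)\,(1-\cos\phi)^2\ \ge\ \frac{\oldconstant{gonzalo}}{8\theta^2}\,\mathfrak g(\upalpha+1,\upbeta)\,\epsilon^4 .
\]
On the other side, for $z\in D_\theta^t$ and $t\ge t_0$ as in Lemma \ref{taylor1} (so that $\sigma_{t_0}t_0^{1/3}\theta\ge2$), that lemma gives $\Re h(z)\le h(\theta)+128\,\mathfrak g(\upalpha,\upbeta)\,\theta^{-5}\sigma^{-3}t^{-1}$. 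Subtracting the two displays yields the stated bound with $\oldconstant{9cc}=\oldconstant{gonzalo}/(8\theta^2)$.

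The integration of the already-established derivative estimates is routine; the one genuinely delicate point is the uniform-in-$(\upalpha,\upbeta)$ lower bound (\ref{hfrakt}) in part $(i)$, where one must keep the $\upbeta$-dependence explicit through the factor $R(\theta,\upalpha,\upbeta)$ and verify that neither $R$ nor $J(\theta,\upalpha)$ degenerates as $\upalpha\to\infty$.
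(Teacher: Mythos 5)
Your proof is correct and follows essentially the same route as the paper: part $(i)$ combines Corollary~\ref{cor1}, Corollary~\ref{corab}, and integration of the lower bound for $\Im h'(\theta+iy)$ from Lemma~\ref{im}$(ii)$; part $(ii)$ combines Corollary~\ref{cor1}, integration of Lemma~\ref{fo} along $C_\theta$, and Lemma~\ref{taylor1} on $D_\theta^t$. You actually supply slightly more detail than the paper --- in particular an explicit verification of the lower bound \eqref{hfrakt}, which the paper leaves implicit after defining $\mathfrak h$, and explicit values for $\oldconstant{7cc}$ and $\oldconstant{9cc}$ --- but the decomposition and the key lemmas invoked are identical.
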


    \begin{proof}{\it Proof of part $(i)$}. From Corollary
      \ref{cor1} we can see that
       $\Re(h(\theta)-h(v))\le 
      0$ which implies that $\Re(h(z)-h(v))\le \Re(h(z)-h(\theta))$. On the other 
      hand, 
      by Corollary \ref{corab}, $\Re(h(z))$ is decreasing in $y$ for 
      $z=\theta+iy$, $y\ge \epsilon$ and 
      increasing in $y$ for 
      $z=\theta+iy$, $y\le -\epsilon$, so that 
      $\Re(h(z)-h(\theta))\le\Re(h(\theta+i\epsilon)-h(\theta))$
      for $z\in D_{\theta}^{\epsilon,+}$.
      It follows that

      \begin{align*}
        & \Re(h(z)-h(v)) =\Re(h(z)-h(\theta))+\Re(h(\theta)-h(v))\\
&        \le \Re(h(\theta+i\epsilon)-h(\theta))
        =\int_0^\epsilon 
\frac{d\Re (h(\theta+iy))}{dy} dy=-\int_0^\epsilon 
        \Im(h'(\theta+iy))dy\\
        &\le 
         - \frac{H(\theta,\epsilon,\upalpha,\upbeta)}{4}
\epsilon^4, 
      \end{align*}
      where in the last inequality we have used part $(ii)$ of Lemma 
      \ref{im}. Now,

      $$
      H(\theta,y,\upalpha,\upbeta)\ge 8 y^2 
      \frac{\Psi_1(\theta+\upalpha)-\Psi_1(\theta+\upalpha+\upbeta)}{\Psi_1(\theta)-
        \Psi_1(\theta+\upalpha+\upbeta)}
      \int_\theta^{\theta+\upalpha}
      \sum_{n\ge 0}\frac{(x-\theta)dx}{((x+n)^2+1)^3}.
      $$
      Defining,

      $$\mathfrak h(\upalpha,\upbeta,\theta):=
8 y^2 
      \frac{\Psi_1(\theta+\upalpha)-\Psi_1(\theta+\upalpha+\upbeta)}{\Psi_1(\theta)-
        \Psi_1(\theta+\upalpha+\upbeta)}
      \int_\theta^{\theta+\upalpha}
      \sum_{n\ge 0}\frac{(x-\theta)dx}{((x+n)^2+1)^3},
      $$
      we finish the proof.

      \medskip

      \noindent {\it Proof of part $(ii)$}. By Corollary \ref{cor1}
      and the fact that $v\in C_\theta^{\epsilon,+}$,
 we have that
      $\Re(h(\theta)-h(v ))\le\Re (h(\theta)-h(\theta e^{i\phi_*}))$, 
      where $\phi_*$ is such that $|\theta e^{i\phi_*}-\theta|=\epsilon$. 
Hence, by Lemma \ref{taylor1}, we have that

      \begin{align*}
        & \Re(h(z)-h(v))\\
        &=\Re(h(z)-h(\theta))+\Re(h(\theta)-h(v))\le 
          \Re(h(\theta)-h(\theta e^{i\phi_*}))\\
        &=-\int_0^{\phi_*}
\Re(i\theta e^{i\phi}h'(\theta e^{i\phi})) d\phi\le 
          - \epsilon^4 \oldc{1}\mathfrak g(\upalpha+1,\upbeta)
          +128 
                           \mathfrak g(\upalpha,\upbeta) 
                           \frac{1}{\theta^5\sigma^3 t},
      \end{align*}
for some constant $\newc\label{1}$,       where in the last inequality we have used
   Lemma 
      \ref{fo}. 
    \end{proof}
    \medskip

    \subsection{Steep-descent properties}
\label{sdp}
    Here we will apply the steep-descent estimates of the previous
    section
    to derive important steep descent properties of the Fredholm
    determinant. We will start proving a couple
    of properties about the functions involved.

\medskip

\begin{lemma}
  \label{th}
                 For all $\upalpha>0$, $\beta>0$,
                 $\theta\in (0,\min\{0.5,0.72\times\upalpha\})$
                 and       $\epsilon\in(0,\theta/2)$ 
 the following bounds are satisfied. 
\begin{itemize}

\item[(i)] For all $v\in C_\theta$, $z\in D_{\theta}^{\epsilon,+}$ and 
  $t\ge 0$ we 
  have that 

      \begin{equation}
        \label{boundd}
      \left|e^{t(h(z)-h(v))-t^{1/3}\sigma(\theta) y(z-v)}\right|
      \le e^{-t\oldconstant{7cc}\epsilon^4 \mathfrak h(\upalpha,\upbeta,\theta) 
        + t^{1/3}\sigma(\theta) |y|\epsilon}. 
      \end{equation}

      \item[(ii)] For all $v\in C_\theta^{\epsilon,+}=V_\theta^{\epsilon,+}$, $z\in D_{\theta}^t$ and 
  $t\ge 0$ we 
  have that 

      \begin{equation}
        \label{boundd1}
      \left|e^{t(h(z)-h(v))-t^{1/3}\sigma(\theta) y(z-v)}\right|
      \le e^{-t\oldconstant{9cc}\epsilon^4 \mathfrak g(\upalpha+1,\upbeta)
        +128\mathfrak g(\upalpha,\upbeta)\frac{1}{\theta^5\sigma^3 }
        + 2t^{1/3}\sigma(\theta) |y|}. 
      \end{equation}

\item[(iii)] There is a constant
  $\newconstant\label{100}(\theta,\epsilon,\upalpha,\upbeta)>0$, which is
  independent of $\upalpha$ and $\upbeta$ when both $\upalpha\ge 1$ and
  $\upbeta\ge 1$, such that for all $v\in W_{\theta,\epsilon}^L$, $z\in
  D_{\theta}^t$  or $v\in C_\theta^\epsilon$, $z\in D_\theta^t$, and $t\ge 1$ ,  we 
  have that 

      \begin{equation}
        \label{boundd2}
      \left|e^{t(h(z)-h(v))-t^{1/3}\sigma(\theta) y(z-v)}\right|
      \le \frac{1}{\oldconstant{100}(\theta,\epsilon,\upalpha,\upbeta)}e^{- \oldconstant{100}(\theta,\epsilon,\upalpha,\upbeta) t\sigma(\theta)^3 
        |v-\theta|^3}.
      \end{equation}

\end{itemize}

\end{lemma}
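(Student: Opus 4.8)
The plan is to derive all three bounds from the real-part estimates of Corollary~\ref{c10}, Lemma~\ref{taylor1} and Lemma~\ref{fo}, together with crude geometric control of the linear term. Since $y$ is real,
$$
\left|e^{t(h(z)-h(v))-t^{1/3}\sigma(\theta)y(z-v)}\right|
= e^{t\Re(h(z)-h(v))}\,e^{-t^{1/3}\sigma(\theta)\,y\,\Re(z-v)},
$$
so in every case it suffices to bound $t\Re(h(z)-h(v))$ from above and to control $\Re(z-v)$ along the relevant contours.

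For part~(i) I would insert $\Re(h(z)-h(v))\le -\oldconstant{7cc}\,\mathfrak h(\upalpha,\upbeta,\theta)\,\epsilon^4$ from Corollary~\ref{c10}(i); since $\Re z=\theta$ on $D_\theta^{\epsilon,+}$ and $|\Re v-\theta|\le\epsilon$ on the part of $C_\theta$ inside $B(\theta,\epsilon)$, we have $|\Re(z-v)|\le\epsilon$ there, which produces the additive term $t^{1/3}\sigma(\theta)|y|\epsilon$. The lower bound (\ref{hfrakt}) for $\upalpha,\upbeta\ge 1$ is then read off from the definition of $\mathfrak h$ via the polygamma estimates of Lemma~\ref{poly} and Corollary~\ref{corcor}(i), together with the elementary fact that $\mathfrak g(\theta+\upalpha+1,\upbeta)/\bigl(\mathfrak g(\theta,\upalpha+\upbeta)\,\mathfrak g(\upalpha,\upbeta)\bigr)$ is bounded below by a constant depending only on $\theta$ when $\upalpha,\upbeta\ge 1$. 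Part~(ii) is structurally identical: apply Corollary~\ref{c10}(ii) for $\Re(h(z)-h(v))$, and note that on $D_\theta^t$ one has $0\le\Re(z-\theta)\le\sigma^{-1}t^{-1/3}$ while $\Re(v-\theta)\le 0$ on $V_\theta^{\epsilon,+}$, so $\Re(z-v)$ is bounded by a constant once $t$ is not too small, giving a linear contribution of order $t^{1/3}\sigma(\theta)|y|$.

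Part~(iii) is the substantive step, and is where the wedge is used. I would split $\Re(h(z)-h(v))=\Re(h(z)-h(\theta))+\Re(h(\theta)-h(v))$. The first piece is controlled on the semicircular portion $S_\theta^t$ of $D_\theta^t$ directly by Lemma~\ref{taylor1}, and on the vertical portion by the steep-descent monotonicity of Corollary~\ref{corab} combined with Lemma~\ref{taylor1} at the junction $\theta\pm i\sigma^{-1}t^{-1/3}$; in every case $t\Re(h(z)-h(\theta))\le 128\,\mathfrak g(\upalpha,\upbeta)\,\theta^{-5}\sigma(\theta)^{-3}$, which is bounded by a constant (uniformly for $\upalpha,\upbeta\ge 1$, since then $\mathfrak g(\upalpha,\upbeta)/\sigma(\theta)^3$ is comparable to a function of $\theta$ alone by Lemma~\ref{poly}) and produces the prefactor $1/\oldconstant{100}$. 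For the second piece, on $W_{\theta,\epsilon}^L$ I would Taylor-expand $h$ at $\theta$ using $h'(\theta)=h''(\theta)=0$ and $h^{(3)}(\theta)=2\sigma^3(\theta)$, dominating the remainder $\sum_{k\ge 4}\tfrac{h^{(k)}(\theta)}{k!}(v-\theta)^k$ by the bounds $|h^{(k)}(\theta)|\le 64\,\mathfrak g(\upalpha,\upbeta)\,k!\,\theta^{-k-2}$ of Corollary~\ref{corcor}(iii); the constraint $\phi\in(\pi/6,\pi/2)$ is precisely what makes $\Re\bigl((v-\theta)^3\bigr)=-\cos(3\phi)|v-\theta|^3>0$, so that for $\epsilon$ small (depending only on $\theta$) one obtains $\Re(h(\theta)-h(v))\le -c\,\sigma^3(\theta)|v-\theta|^3$. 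The case $v\in C_\theta^\epsilon$ is handled along the same lines after replacing the circular arc by the wedge $W_\theta^L$ as in Section~\ref{he} (alternatively via Lemma~\ref{fo} and the positivity of $\tfrac{\sigma^3(\theta)}{2\theta}+\tfrac{h^{(4)}(\theta)}{4!}$ from Corollary~\ref{corcor}(iv)). Finally, writing $\Re(z-v)\le\sigma^{-1}t^{-1/3}+|v-\theta|$ along these contours, the linear term $t^{1/3}\sigma(\theta)|y|\,\Re(z-v)$ is absorbed into $\tfrac12\oldconstant{100}\,t\sigma^3(\theta)|v-\theta|^3$ by a two-case argument (both are bounded by constants when $|v-\theta|\lesssim t^{-1/3}\sigma(\theta)^{-1}$, and the cubic term dominates the linear one otherwise), leaving the asserted bound with half the cubic exponent.

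The main obstacle is the uniform cubic lower bound $\Re(h(\theta)-h(v))\le -c\,\sigma^3(\theta)|v-\theta|^3$ on the wedge in part~(iii): it is what forces the angle range $\phi\in(\pi/6,\pi/2)$, and carrying it out requires controlling the Taylor remainder of $h$ at $\theta$ through Corollary~\ref{corcor}(iii) and tracking the dependence of every constant on $\mathfrak g(\upalpha,\upbeta)$ and on $\sigma(\theta)$, so that the estimates remain usable in the regime $\upalpha_t,\upbeta_t\to\infty$ of Theorem~\ref{int-dis} where $\sigma(\theta)\to 0$.
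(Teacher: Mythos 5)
Your parts (i) and (ii) follow the paper's route exactly: bound the real part of $h(z)-h(v)$ by Corollary~\ref{c10} and control the linear term crudely along the contours. (Minor caveat: in (i), $|\Re(z-v)|\le\epsilon$ fails when $v\in C_\theta$ is outside $B(\theta,\epsilon)$; one should instead use $|\Re(z-v)|\le 2\theta$, which is what the paper implicitly does --- this does not affect the substance.)

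Part (iii) has a genuine gap. You assert that ``the constraint $\phi\in(\pi/6,\pi/2)$ is precisely what makes $\Re((v-\theta)^3)=-\cos(3\phi)|v-\theta|^3>0$, so that for $\epsilon$ small \dots $\Re(h(\theta)-h(v))\le -c\,\sigma^3(\theta)|v-\theta|^3$.'' But $\phi$ is not a free parameter bounded away from $\pi/2$: it is fixed by the requirement that the endpoints of $W_\theta^L$ match those of $C_\theta^\epsilon$, and since $C_\theta$ is tangent to the vertical at $\theta$, this forces $\pi-\phi=\pi/2+\tfrac{\epsilon}{2\theta}+o(\epsilon)$, i.e.\ $\phi\to\pi/2$ as $\epsilon\to 0$ (the paper computes this explicitly). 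Consequently $-\cos(3\phi)=\sin\bigl(\tfrac{3\epsilon}{2\theta}+o(\epsilon)\bigr)=O(\epsilon)$, so the cubic term contributes only $\approx -\tfrac{\sigma^3\epsilon}{2\theta}|v-\theta|^3$ to $\Re(h(\theta)-h(v))$. The quartic term is not a negligible remainder: since $\arg\bigl((v-\theta)^4\bigr)\approx 2\pi$ and $h^{(4)}(\theta)<0$, it contributes a \emph{positive} amount to $\Re(h(\theta)-h(v))$ of order $|h^{(4)}(\theta)|\,|v-\theta|^4\lesssim |h^{(4)}(\theta)|\,\epsilon\,|v-\theta|^3$, which by Corollary~\ref{corcor}(iii) is of the \emph{same} order in $\epsilon$ as the cubic term. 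So one cannot conclude the sign by treating $\sum_{k\ge 4}$ as a remainder dominated by the cubic piece; making $\epsilon$ small makes both pieces small simultaneously. The paper's proof resolves this by grouping the cubic and quartic terms and invoking $\tfrac{\sigma^3(\theta)}{2\theta}+\tfrac{h^{(4)}(\theta)}{4!}>0$ (Corollary~\ref{corcor}(iv), and its quantitative version (v) for the $\upalpha_t,\upbeta_t\to\infty$ regime); only the $k\ge 5$ tail is then absorbed as a genuine $o(\epsilon)$ remainder. You mention this positivity only parenthetically and only as an ``alternative'' for $v\in C_\theta^\epsilon$; in fact it is the essential ingredient of part~(iii) for both $W_{\theta,\epsilon}^L$ and $C_\theta^\epsilon$, and without it the argument does not close.
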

\begin{proof} {\it Proof of parts $(i)$ and $(ii)$}.   Part $(i)$
  follows from
  part $(i)$ of Corollary 
      \ref{c10} and the inequalities,

  \begin{align*}
  \left|e^{t(h(z)-h(v))-t^{1/3}\sigma(\theta) y(z-v)}\right|
                     & =     e^{t\Re((h(z)-h(v)))-t^{1/3}\sigma(\theta) y\Re((z-v))}\\
  &  \le     e^{-t\oldconstant{7cc}\epsilon^4\mathfrak h(\upalpha,\upbeta,\theta)
    +t^{1/3}\sigma(\theta) |y|}.
  \end{align*}
Similarly part $(ii)$ follows from   part $(ii)$ of Corollary 
      \ref{c10}.

      \medskip 

      \noindent {\it Proof of part $(iii)$}. Let us use the inequality
      $|z-v|\le |v-\theta|+\sigma^{-1}t^{-1/3}$ and Lemma \ref{taylor1} to conclude that

      \begin{align}
        \nonumber 
        &      \Re\left( t(h(z)-h(v))-t^{1/3}\sigma(\theta) y(z-v)\right)\\
        \nonumber 
        &    =\Re (t(h(z)-h(\theta))+
      \Re\left( t(h(\theta)-h(v))-t^{1/3}\sigma(\theta) 
                                                                      y(z-v)\right)\\
        \label{tttt}
        \!\!\!\!\! &\le  \!   \Re\left( t(h(\theta)-h(v))\right) 
                                                                                    +
                                                                                    128\mathfrak 
                                                                                    g(\upalpha,\upbeta)\frac{1}{\theta^5\sigma^3 }
                    +
                                                                        t^{1/3}\sigma(\theta)|y||v-\theta|+|y|.
      \end{align}
To estimate the first term of the right-hand side in display 
(\ref{tttt}) we will make a Taylor series expansion around $\theta$
of $h(v)$. Recall that $v\in W_{\theta,\epsilon}^L$ or $v\in 
C_\theta^\epsilon$.
Define $\bar v$ by 

$$
v=\theta+\frac{1}{\sigma(\theta)t^{1/3}}\bar v, 
$$
and $\bar h(\bar v):=h(v)$. Then,

\begin{align*}
&t\bar h(\bar 
                    v)\\
  &=th(\theta)+\frac{\bar v^3}{3!}\frac{1}{\sigma^3}h^{(3)}(\theta)+
\frac{\bar v^4}{4!}\frac{1}{\sigma^4 t^{1/3}}h^{(4)}(\theta)+
\sum_{k=5}^\infty \frac{\bar v^k}{k!}\frac{1}{\sigma^k t^{(k-3)/3}}
    h^{(k)}(\theta). 
\end{align*}
Thus, 

\begin{align}
  &
  \nonumber
\left|t\bar h(\bar v)-    th(\theta)-\frac{\bar v^3}{3!}\frac{1}{\sigma^3}h^{(3)}(\theta)-
    \frac{\bar v^4}{4!}\frac{1}{\sigma^4
    t^{1/3}}h^{(4)}(\theta)\right|\\
  &
      \label{last} \le 
   \frac{|\bar v^3|}{\sigma^3} \sum_{k=5}^\infty \frac{\epsilon^{k-3}}{k!}
    h^{(k)}(\theta). 
\end{align}
Now, from part $(iii)$ of Corollary \ref{corcor}, we have that

$$
\left|\frac{|\bar v^3|}{\sigma^3} \sum_{k=5}^\infty 
  \frac{\epsilon^{k-3}}{k!}h^{(k)}(\theta)\right|\le 
64\frac{\mathfrak g(\upalpha,\upbeta)}{\sigma^3} |\bar v^3|\sum_{k=5}^\infty\frac{\epsilon^{k-3}}{\theta^{k+2}}
\le 128 \frac{\mathfrak g(\upalpha,\upbeta)}{\sigma^3} \epsilon^2 \frac{1}{\theta^7}|\bar v^3|, 
$$
where in the last inequality we have used $\epsilon<\theta/2$.
It follows that 
\begin{equation}
  \label{tr}
\left|t\Re(\bar h(\bar v)-h(\theta))-\frac{\Re(\bar v^3)}{3}-
    \frac{\Re(\bar v^4)}{4!}\frac{1}{\sigma^4
      t^{1/3}}h^{(4)}(\theta)\right|\le
128  \frac{\mathfrak g(\upalpha,\upbeta)}{\sigma^3} \epsilon^2 \frac{1}{\theta^7}|\bar v^3|.
\end{equation}
Now, since $v\in W_{\theta,\epsilon}^L$ or $v\in 
C_\theta^\epsilon$,
 the argument
  of $\bar v$ is $\pm\left(\frac{\pi}{2}+\frac{\epsilon}{2\theta}+o(\epsilon)\right)$.
Then, from the fact that 
$h^{(4)}(\theta)<0$ (see part $(v)$ of Corollary \ref{corcor}) and that $(\sigma
t^{1/3})^{-1}|\bar v|<\epsilon$ (because $v\in W_{\theta,\epsilon}^L$), we have 

\begin{align}
\nonumber  &-\frac{\Re(\bar v^3)}{3}-
\frac{\Re(\bar v^4)}{4!}\frac{1}{\sigma^4 t^{1/3}}h^{(4)}(\theta)\\
\nonumber &=-\sin\left(\frac{3\epsilon}{2\theta}+o(\epsilon)\right)\frac{|\bar 
  v|^3}{3}-
\cos\left(\frac{2\epsilon}{\theta}+o(\epsilon)\right)\frac{h^{(4)}(\theta)}{\sigma^4 
                                                                      t^{1/3}}\frac{|\bar v|^4}{4!}\\
\nonumber &\le -\epsilon |\bar v|^3\frac{1}{\sigma^3}\left(
                                                                                                        \frac{\sigma^3}{2\theta}+\frac{h^{(4)}(\theta)}{4!}
                                                                                                        +o_1(\epsilon)\frac{\sigma^3}{\epsilon}+o_2(\epsilon) 
            h^{(4)}(\theta)\right)\\
           &
             \nonumber
             =
             -\epsilon |\bar v|^3\frac{1}{\sigma^3}\left(
             \frac{\sigma^3}{2\theta}+\frac{h^{(4)}(\theta)}{4!}\right)
                                                                                                        +o_1(\epsilon)|\bar
             v|^3 +o_2(\epsilon)|\bar v|^3 
           \frac{ h^{(4)}(\theta)}{\sigma^3}\\
\label{lee}  &\le -\epsilon |\bar v|^3 \oldc{6}(\theta,\upalpha,\upbeta), 
\end{align}
where in the last inequality we have assumed that $\epsilon$ is small
enough, and where
by parts $(iii)$ and $(vi)$ of Corollary \ref{corcor} and by Lemma \ref{iv}, we have that $\newc\label{6}(\theta,\upalpha,\upbeta)$ is
a constant independent of $\upalpha$ and $\upbeta$ when both
$\upalpha\ge 1$ and $\upbeta\ge 1$, and $\oldc{6}(\theta,\upalpha,\upbeta)>0$. We then have 
from (\ref{tr}) and (\ref{lee}), using again part $(iv)$ of Corollary
\ref{corcor},
that for $\epsilon$ small enough,

$$
-t\Re(\bar h(\bar v)-h(\theta))\le  -\oldc{7}(\theta,\upalpha,\upbeta)\epsilon |\bar
v|^3,
$$
where  $\newc\label{7}(\theta,\upalpha,\upbeta)$ does not depend on $\upalpha$ and
$\upbeta$
for $\upalpha\ge 1$ and $\upbeta\ge 1$.
Hence, 

$$
-t\Re( h( v)-h(\theta))\le  -\oldc{7}(\theta,\upalpha,\upbeta)\epsilon \sigma(\theta)^3t | v-\theta|^3 
$$
and from (\ref{tttt}) and Lemma \ref{taylor1}, we get that

\begin{align*}
  &\left|e^{t(h(z)-h(\theta))-t^{1/3}\sigma(\theta)y(z-v)}\right|\le
    \left|e^{t(h(z)-h(\theta))+t(h(\theta)-h(v))-t^{1/3}\sigma(\theta)y(z-v)}\right|\\
  &
    \le
    e^{128\mathfrak g(\upalpha,\upbeta)\frac{1}{\theta^5\sigma^3}}
    e^{t(\Re(h(\theta)-h(v)))+t^{1/3}\sigma(\theta)|y(z-v)|}\\
  &\le 
e^{-\oldc{7}(\theta,\upalpha,\upbeta)\epsilon  t\sigma(\theta)^3
  |v-\theta|^3+                                                              128\mathfrak 
                                                                                    g(\upalpha,\upbeta)\frac{1}{\theta^5\sigma^3 }
  +\newc t^{1/3}\sigma(\theta)|y||v-\theta| }, 
\end{align*}
which proves part $(iii)$. 

\end{proof}

    \medskip

\subsection{Proof of Proposition \ref{p1}}
\label{sp1}
We will now present the proof of Proposition \ref{p1} (see
also Proposition 5.6 of
\cite{Barr-Cor} and Section 8 of \cite{K21}) using the steep descent
properties
of Section \ref{sdp}
along appropriate contours. At several steps,
in order to deal with the singularity of the kernel $K_u^{RW}(v,v')$ for
$v=v'=\theta$,
we will deform the contours $C_\theta$ to $V_\theta^\epsilon$
(see Figure \ref{figure1}) and we will also have to
deform the contour $D^\epsilon_\theta$ so that it avoids $\theta$.
We will then show that the integration outside the ball $B(\theta,\epsilon)$
has a negligible contribution in the limit.
In what follows we will assume that either $\upalpha$ and $\upbeta$
are fixed (time-independent) or that these parameters are time-dependent,
$(\upalpha_t)_{t\ge 0}$ and $(\upbeta_t)_{t\ge 0}$, and satisfy
(\ref{as1}) and (\ref{as2}).

 \medskip

\noindent {\bf Step 0.}
 As explained above, we first deform the contour of integration in the 
 definition of the kernel $K_u^{RW}(v,v')$ from $D_{1/2}$ to 
 $D_\theta^t$ (see Figure \ref{figure3}), so that

\begin{equation}
  \label{ku2}
K_u^{RW}(v,v')=\frac{1}{2\pi 
  i}\int_{D_\theta^t}\frac{\pi}{\sin(\pi (z-v))}
e^{t(h(z)-h(v))-t^{1/3}\sigma(\theta) y(z-v)}\frac{\Gamma(v)}{\Gamma(z)}
\frac{dz}{z-v'}. 
\end{equation}

\medskip 

\noindent {\bf Step 1.} We will now show that 
whenever $v,v'\in C_\theta$ we have that 

\begin{equation}
  \label{limkke}
    | K_u^{RW}(v,v')- K_{y,\epsilon}^{RW}(v,v')|\le 
   \frac{1}{\oldc{9}} e^{-\oldca{1a}t}, 
  \end{equation}
for some constant $\newc\label{9}=\oldc{9}(\theta,\epsilon,y)>0$
independent 
of $\upalpha$ and $\upbeta$ and 
   $\newca\label{1a}=\oldca{1a}(\theta,\epsilon,\upalpha,\upbeta,y)>0$, 
  such that 

  $$
\oldca{1a}(\theta,\epsilon,\upalpha,\upbeta,y)\ge \oldc{9} \mathfrak 
g(\upalpha,\upbeta) 
\quad {\rm for}\ \upalpha\ge 1\ {\rm and}\ \upbeta\ge 1, 
$$
where the kernel $K_{y,\epsilon}$ was defined in (\ref{keps}). 
Note that

\begin{align}
  \nonumber 
&   K_u^{RW}(v,v')- K_{y,\epsilon}^{RW}(v,v')=\\
  \label{ku3}
&\int_{D_\theta^{\epsilon,+}}\frac{\pi}{\sin(\pi (z-v))}
e^{t(h(z)-h(v))-t^{1/3}\sigma(\theta) y(z-v)}\frac{\Gamma(v)}{\Gamma(z)}
\frac{dz}{z-v'}. 
\end{align}
       From part $(i)$ of Lemma \ref{th}, we have that since $v\in 
       C_\theta$ and $z\in D_\theta^{\epsilon,+}$, 

      \begin{equation}
        \label{limit2}
\left| e^{t(h(z)-h(v))-t^{1/3}\sigma(\theta) y(z-v)}\right|\le e^{-t 
  \oldconstant{7cc} \epsilon^4\mathfrak h(\upalpha,\upbeta,\theta)+t^{1/3}\sigma|y|\epsilon}. 
\end{equation}
Now, recall the  asymptotics 
$|\Gamma(x+iy)| e^{\frac{\pi}{2}|y|}|y|^{1/2-x}\to\sqrt{2\pi}$ as 
$y\to\pm\infty$
for $x$ and $y$ real. This implies that 

\begin{equation}
  \label{gammaz}
|\Gamma(z)|\ge \frac{1}{\oldc{30}} e^{-\frac{\pi}{2}|\Im (z)|}, 
  \end{equation}
  for $z\in D_\theta$ for some 
  $\newc\label{30}(\epsilon)>0$. 
  Furthermore, we have that 

  \begin{equation}
    \label{sinz}
  |\sin(\pi z)|\ge \oldc{40} e^{\pi |\Im 
    (z)|}, 
  \end{equation}
  for some $\newc\label{40}>0$. 
From this and 
 (\ref{limit2}) we get now using that $v,v'\in C_\theta$ and $z\in 
 D_\theta^{\epsilon,+}$ that 

\begin{align}
  \nonumber 
&        \left|\frac{\pi}{\sin (\pi (z-v))}
          e^{t(h(z)-h(v))-t^{1/3}\sigma(\theta) y(z-v)}
        \frac{\Gamma(v)}{\Gamma(z)}\frac{1}{z-v'}\right|\\
  &
            \label{boundd3}
\le \oldc{31}|\Gamma(v)| e^{-\frac{\pi}{2}|\Im(z)|}e ^{-t\oldconstant{7cc}\epsilon^4 \mathfrak h(\upalpha,\upbeta,\theta) 
        + t^{1/3}\sigma(\theta) |y|\epsilon}, 
\end{align}
for some constant $\newc\label{31}=\oldc{31}(\theta,\epsilon)>0$. 
Note that since $v\in C_\theta$, $|\Gamma(v)|\le \newc(\theta)$. 
Hence, from  (\ref{boundd3}),  using the fact that

$$
\mathfrak h(\upalpha,\upbeta,\theta)\ge \oldconstant{80}(\theta) \mathfrak g(\upalpha,\upbeta)\quad 
{\rm for}\ \upalpha\ge 1\ {\rm and}\ \upbeta\ge 1, 
$$
(see (\ref{hfrakt}) of part $(i)$ of Corollary \ref{c10} and part $(i)$ of Lemma \ref{th}), 
and part $(iii)$  of Corollary \ref{corcor} and Lemma \ref{iv} to obtain an 
upper bound for $\sigma$, 
we obtain (\ref{limkke}).

\medskip

\noindent {\bf Step 2.}
Here we will show that for 
$v\in C_\theta^{\epsilon,+}$ and $v'\in 
C_\theta$ one has that 

\begin{equation}
  \label{leftk}
  \left|K_{y,\epsilon}^{RW}(v,v')\right|\le 
  \left|K_u^{RW}(v,v')\right|\le   
\frac{1}{\oldc{15}} e^{-\oldca{2a}t}, 
\end{equation} 
for a pair of constants $\oldca{2a}(\theta,\upalpha,\upbeta,y)>0$ and 
$\oldc{15}=\oldc{15}(\theta,\epsilon,y)>0$
such that

  $$
\newca\label{2a}=\oldca{2a}(\theta,\epsilon,\upalpha,\upbeta,y)\ge \newc\label{15} \mathfrak 
g(\upalpha,\upbeta) 
\quad {\rm for}\ \upalpha\ge 1\ {\rm and}\ \upbeta\ge 1. 
$$
In fact, using again the estimates (\ref{gammaz}) and (\ref{sinz}) and  part $(ii)$ of Lemma (\ref{th}) we have that 
(here $z\in D_\theta^t$) 

\begin{align*}
  &     \left|\frac{\pi}{\sin (\pi (z-v))}
          e^{t(h(z)-h(v))-t^{1/3}\sigma(\theta) y(z-v)}
        \frac{\Gamma(v)}{\Gamma(z)}\frac{1}{z-v'}\right|\\
&\le \newc\label{17} |\Gamma(v)| e^{-\frac{\pi}{2}|\Im(z)|}
                                                             e^{-t\oldconstant{9cc}\epsilon^4 \mathfrak g(\upalpha+1,\upbeta) 
                                                             +128\mathfrak 
                                                             g(\upalpha,\upbeta) 
                                                             \frac{1}{\theta^5\sigma^3 }
        + 2t^{1/3}\sigma(\theta) |y|}, 
\end{align*}
for some constant $c_{11}=c_{11}(\theta,\epsilon)>0$. 
Integrating over $z$ and using  part $(iii)$ of Corollary \ref{corcor} to bound  $\sigma$, 
we obtain (\ref{leftk}). 

\medskip 

\noindent {\bf Step 3.}
Here we will show that for all 
$v\in C_\theta^\epsilon$ and $v'\in C_\theta$, we have that

\begin{equation}
  \label{leftk2}
  \left|K_{y,\epsilon}^{RW}(v,v')\right|\le 
  \left|K_u^{RW}(v,v')\right|\le   
\frac{1}{\oldc{18}} \sigma t^{1/3} e^{-\oldc{18} t\sigma^3 |v-\theta|^3}, 
\end{equation} 
for some constant $\newc\label{18}=\oldc{18}(\theta,\epsilon,\upalpha,\upbeta)>0$
such that 

$$
\oldc{18}(\theta,\epsilon,\upalpha,\upbeta)\ge \newc\label{20}\quad 
{\rm for}\ \upalpha\ge 1,\upbeta\ge 1, 
$$
for some constant $\oldc{20}>0$ independent of $\upalpha$ and $\upbeta$. 
Indeed, by part $(iii)$
of Lemma \ref{th}, (\ref{gammaz}), (\ref{sinz}), the inequality 

$$
\left|\frac{(z-v)}{\sin(\pi(z-v))}\right|\le \newc, 
$$
and the fact that $|z-v'|\ge \newc (\sigma t^{1/3})^{-1}$, we have that

\begin{align*}
  &     \left|\frac{\pi}{\sin (\pi (z-v))}
          e^{t(h(z)-h(v))-t^{1/3}\sigma(\theta) y(z-v)}
        \frac{\Gamma(v)}{\Gamma(z)}\frac{1}{z-v'}\right|\\
&\le \newc\label{211}\sigma t^{1/3} |\Gamma(v)| e^{-\frac{\pi}{2}|\Im(z)|}
\frac{1}{\oldconstant{100}}e^{- \oldconstant{100} t\sigma(\theta)^3 
        |v-\theta|^3}, 
\end{align*}
for some constant $c_{16}>0$, where we recall that 
$\oldconstant{100}(\theta,\epsilon,\upalpha,\upbeta)>0$ does not 
depend on $\upalpha$ and $\upbeta$ for $\upalpha\ge 1$ and $\upbeta\ge 1$. 
which proves (\ref{leftk2}). 

\medskip

\noindent {\bf Step 4.} 
Here we will prove that 

\begin{equation}
  \label{step2}
\lim_{t\to\infty}\det(I-K^{RW}_u)_{L^2(C_\theta)}=\lim_{t\to\infty}\det(I-K^{RW}_u)_{L^2(C_\theta^\epsilon)}, 
\end{equation}
as long as the limit in the right-hand side exists. 
 Consider the Fredholm determinantal expansion 

\begin{align}
\nonumber 
  &\det(I-K^{RW}_u)_{L^2(C_\theta)}\\
  \label{fexp}  &=1+\sum_{n=1}^\infty 
\frac{(-1)^n}{n!}\int_{(C_\theta)^n} 
\det\left(K_u^{RW}(w_i,w_j)\right)_{i,j=1}^ndw_1\ldots dw_n. 
\end{align}
Now note that

\begin{align*}
&\int_{(C_\theta)^n} 
\det\left(K_u^{RW}(w_i,w_j)\right)_{i,j=1}^ndw_1\ldots dw_n\\
&=
\int_{(C_\theta^\epsilon)^n} 
\det\left(K_u^{RW}(w_i,w_j)\right)_{i,j=1}^ndw_1\ldots dw_n\\
&+\int_{(C_\theta^\epsilon)^n\backslash (C_\theta)^n}
\det\left(K_u^{RW}(w_i,w_j)\right)_{i,j=1}^ndw_1\ldots dw_n. 
\end{align*}
Let us now show that the limit as $t\to\infty$ of the second term 
above vanishes.

Combining (\ref{leftk}) and (\ref{leftk2}) with 
Hadamard's inequality, and using part $(iii)$ of Corollary 
\ref{corcor}, 
we have that

 \begin{equation}
 \label{leftdet}
\left|\det\left(K_u^{RW}(w_i,w_j)\right)_{i,j=1}^n\right|\le 
n^{n/2}(\oldc{22} \sigma^3 t)^{n/3}e^{-\oldc{23}\sigma^3 t}, 
\end{equation}
for some constants 
$\newc\label{22}=\oldc{22}(\theta,\epsilon,\upalpha,\upbeta)>0$
and $\newc\label{23}=\oldc{23}(\theta,\epsilon,\upalpha,\upbeta)>0$
with 
the property that both constants are independent of $\upalpha$ and 
$\upbeta$
for $\upalpha\ge 1$ and $\upbeta\ge 1$. 
But note that the right-hand side of (\ref{leftdet}) defines 
a series 

\begin{equation}
  \label{sumn}
\sum_{n=1}^\infty \frac{n^{n/2}}{n!}(\oldc{22}\sigma^3 
t)^{n/3}e^{-\oldc{23}\sigma^3 t}\le  e^{-\oldc{23}\sigma^3 t 
  +2 (\oldc{22})^2(\sigma^3 t)^{2/3}}. 
\end{equation}
Now, in the case in which $\upalpha$ and $\upbeta$ are fixed it is 
obvious that the right-hand side of (\ref{sumn}) converges to zero as 
$t\to\infty$. 
On the other hand, under the assumption that $(\upalpha_t)_{t\ge 0}$
and 
$(\upbeta_t)_{t\ge 0}$ satisfy (\ref{as1}) and (\ref{as2}), we have 
that $\lim_{t\to\infty}\sigma^3 t=\infty$ (see (\ref{limti})), so that 
we also have that the right-hand side of (\ref{sumn}) tends to $0$
as $t\to\infty$. It follows that 

$$
\lim_{t\to\infty}\sum_{n=1}^\infty \frac{(-1)^n}{n!}\int_{(C_\theta)^n\backslash (C_\theta^\epsilon)^n}
\det\left(K_u^{RW}(w_i,w_j)\right)_{i,j=1}^ndw_1\ldots dw_n=0, 
$$
which proves (\ref{step2}). 
\medskip 

\noindent {\bf Step 5.} Here we will show that

\begin{equation}
  \label{step3}
\lim_{t\to\infty}\det(I-K^{RW}_u)_{L^2(C_\theta^\epsilon)}=\lim_{t\to\infty}\det(I-K^{RW}_{y,\epsilon})_{L^2(C_\theta^\epsilon)}. 
\end{equation}
Now we will use the following inequality for the difference 
between the determinants of two $n\times n$ matrices 
$A=(A_1,\ldots,A_n)$ and $B=(B_1,\ldots,B_n)$, 
where $(A_i)_{1\le i\le n}$ and $(B_i)_{1\le i\le n}$ are the columns 
of $A$ and $B$ respectively, 

\begin{equation}
  \label{dadb}
|\det (A)-\det (B)|\le  \sum_{j=1}^n |\det(B_1,\ldots, B_{j-1},A_j-B_j,A_{j+1},\ldots,A_n)|. 
\end{equation}
Now choose $A=(K_u^{RW}(w_i,w_j))_{i,j=1}^n$ and 
$B=(K_{y,\epsilon}^{RW}(w_i,w_j))_{i,j=1}^n$, 
and apply (\ref{dadb}), the bounds (\ref{limkke}) of Step 1 and 
(\ref{leftk2}) of Step 4, and Hadamard's inequality 
to conclude that 

\begin{align*}
  &\left|\det(K_u^{RW}(w_i,w_j))_{i,j=1}^n-\det(K_{y,\epsilon}^{RW}(w_i,w_j))_{i,j=1}^n\right|\\
  &\le 
  n \left(n\frac{1}{\oldc{9}} e^{-2\oldca{1a}  t}\right)^{1/2}(n   \oldc{24}(\sigma^3 t)^{2/3})^{n/2}=
  (\oldc{25})^{n/2} n^{(n+1)/2} (\sigma^3 t)^{n/2} e^{-\oldca{1a} t}, 
\end{align*}
for some constants $\newc\label{24}=\oldc{24}(\theta,\epsilon, y, 
\upalpha, \upbeta)>0$ and $\newc\label{25}=\oldc{25}(\theta,\epsilon, y, 
\upalpha, \upbeta)>0$, which are independent of $\upalpha$ and $\upbeta$ for $\upalpha\ge 1$
and $\upbeta\ge 1$. 
As in Step 4, this implies that 

\begin{align}
\label{sumn1}&\sum_{n=1}^\infty\frac{1}{n!}\left|\int_{(C_\theta^\epsilon)^n}\det(K_u^{RW}(w_i,w_j))_{i,j=1}^ndw_1\cdots 
                    dw_n\right.\\
 \nonumber &-\left.\int_{(C_\theta^\epsilon)^n}\det(K_{y,\epsilon}^{RW}(w_i,w_j))_{i,j=1}^ndw_1\cdots 
    dw_n\right|\le 
 e^{-\oldc{26}\sigma^3 t 
  +2 (\oldc{27})^2 (\sigma^3t)^{2/3}}, 
\end{align}
for a pair of constants $\newc\label{26}=\oldc{26}(\theta,\epsilon, y, 
\upalpha, \upbeta)>0$ and 
$\newc\label{27}=\oldc{27}(\theta,\epsilon,y,\upalpha,\upbeta)>0$, 
which are independent of $\upalpha$ and $\upbeta$ for $\upalpha\ge 1$
and $\upbeta\ge 1$, 
As in Step 3, we conclude that either in the case in which $\upalpha$
and $\upbeta$ are constant, or in the case in which $(\upalpha_t)_
{t\ge 0}$ and $(\upbeta_t)_{t\ge 0}$ satisfy (\ref{as1}) and 
(\ref{as2}), 
the right-hand side of (\ref{sumn1}) tends to $0$ as $t\to\infty$
which combined with Step 4,  by the Fredholm determinant expansion 
implies (\ref{step3}). 

\medskip

\noindent {\bf Step 6.} We will now show that

$$
\lim_{t\to\infty}\det(I-K_{y,\epsilon}^{RW})_{L^2(C_\theta^\epsilon)}=
\det(I+K_y)_{L^2(C)}.
$$
First note that there is no problem in deforming the contour
$C_\theta^\epsilon$ to $W_\theta^L$, so that
it is enough to prove that

$$
\lim_{t\to\infty}\det(I-K_{y,\epsilon}^{RW})_{L^2(W_\theta^L)}
=\det(I+K_y)_{L^2(C)}.
$$
To prove this, we will first do a change of coordinates in the integration variables
of the Fredholm determinant expansion and in the integral defining the
kernel $K_{y,\epsilon}^{RW}$, introducing $\bar z$, $\bar v$ and $\bar
v'$ defined by

\begin{equation}
  \label{zt}
z=\theta+\frac{1}{\sigma(\theta)t^{1/3}}\bar z, 
v=\theta+\frac{1}{\sigma(\theta)t^{1/3}}\bar v\qquad {\rm and}\quad 
v'=\theta+\frac{1}{\sigma(\theta)t^{1/3}}\bar v'. 
\end{equation}
We then have that

$$
\det(I-K_{y,\epsilon}^{RW})_{L^2(W_\theta^L)}=
\det(I-\bar K_\epsilon^t)_{L^2(W^\infty_\theta)},
$$
where

$$
\bar K_\epsilon^t(\bar v,\bar v'):={\mathbf 1}_{|\bar v|,|\bar v'|\le
  \epsilon \sigma t^{1/3}}\frac{1}{\sigma(\theta)t^{1/3}}
K_{y,\epsilon}^{RW}\left(\theta+\frac{1}{\sigma(\theta)t^{1/3}}\bar
  v,\theta
  +\frac{1}{\sigma(\theta)t^{1/3}}\bar v'\right).
$$
We will first prove the pointwise convergence of the kernel $\bar 
K_\epsilon^t$.
Consider the contour $L_\epsilon:=D_0^{1,\epsilon \sigma t^{1/3},+}\cup S_0^1$ formed by the two vertical
lines $D_0^{1,\epsilon\sigma t^{1/3},+}:=\{yi: y\in [1,\epsilon\sigma t^{1/3}]\}\cup \{
yi:y\in [-1,-\epsilon\sigma t^{1/3}]\}$ and the
semicircle
$S_0^1:=\{z: |z|=1, \Re z\ge 0\}$. We adopt the convention $L_\infty$
as the contour $L_\epsilon$ with $\epsilon=\infty$. We then have

\begin{align*}
  &
\bar 
K_\epsilon^t(\bar v,\bar v')\\
&=\frac{{\mathbf 1}_{|\bar v|,|\bar v'|\le\epsilon\sigma
    t^{1/3}}}{2\pi i}
\int_{L_\epsilon}
\frac{\sigma^{-1}t^{-1/3}\pi}{\sin \left(\sigma^{-1}t^{-1/3}\pi (\bar 
                               z-\bar v)\right)} e^{t(\bar h(\bar 
                               z)-\bar h(\bar v))-
y(\bar z-\bar v)}\frac{\bar \Gamma (\bar v)}{\bar \Gamma(\bar 
                               z)}\frac{1}{\bar z-\bar v'}d\bar z,
\end{align*}
where

$$
\bar h(w)=h\left(\theta+\sigma^{-1}t^{-1/3}w\right) 
\quad {\rm and}\quad 
\bar \Gamma (w)=\Gamma (\theta+\sigma^{-1}t^{-1/3}w). 
$$
Now, from the limits (where again we use that  is (\ref{limti}) satisfied
both in the case $\upalpha$ and $\upbeta$ constant or
$(\upalpha_t)_{t\ge 0}$ and $(\upbeta_t)_{t\ge 0}$ satisfying
(\ref{as1}) and (\ref{as2})),

\begin{align*}
\lim_{t\to\infty}    \frac{\sigma(\theta)^{-1}t^{-1/3}\pi}{\sin (\sigma^{-1}t^{-1/3}\pi 
    (\bar z-\bar v))}&=\frac{1}{\bar z-\bar v}\\
   \lim_{t\to\infty}\frac{\bar\Gamma(\bar v)}{\bar\Gamma(\bar z)}&=1\\
    \lim_{t\to\infty}t(\bar h(\bar z)-\bar h(\bar v))&=\frac{1}{3}(\bar 
    z^3-\bar v^3), 
\end{align*}
 it follows that

 \begin{align*}
&\lim_{t\to\infty}\frac{\sigma^{-1}t^{-1/3}\pi}{\sin \left(\sigma^{-1}t^{-1/3}\pi (\bar 
                               z-\bar v)\right)} e^{t(\bar h(\bar 
                               z)-\bar h(\bar v))-
y(\bar z-\bar v)}\frac{\bar \Gamma (\bar v)}{\bar \Gamma(\bar 
z)}\frac{1}{\bar z-\bar v'}\\
&=\frac{1}{(\bar z-\bar v)(\bar z-\bar v')}e^{\frac{1}{3}(\bar z^3-\bar v^3)-y(\bar 
  z-\bar v)}. 
 \end{align*}
 We want to justify next that the above limit can be commuted with the
 integration over $\bar z$. Note that $v\in W_{\theta,\epsilon}^L$
 and $z\in D_\theta^t$ is implies that $\bar v\in W^\infty$ and $\bar
 z\in L_\infty$, so we can apply part $(iii)$ of Lemma \ref{th}
 to bound the exponential and conclude that

 \begin{align}
   \nonumber
   &\left|\frac{\sigma^{-1}t^{-1/3}\pi}{\sin \left(\sigma^{-1}t^{-1/3}\pi (\bar 
                               z-\bar v)\right)} e^{t(\bar h(\bar 
                               z)-\bar h(\bar v))-
y(\bar z-\bar v)}\frac{\bar \Gamma (\bar v)}{\bar \Gamma(\bar 
                     z)}\frac{1}{\bar z-\bar v'}\right|\\
\label{lastlast}   &\le
\oldc{28}\frac{|\Gamma(\bar v)|}{|\bar z- \bar v||\bar
z-\bar v'|} e^{-\oldconstant{100} |\bar v|^3}.
\end{align}
for some constant $\newc\label{28}>0$,
which is integrable in $\bar z$ at $\infty$ because the right-hand
side decays quadratically. We conclude then by the dominated convergence theorem that

\begin{align}
       \label{convconv}             \lim_{t\to\infty} K_\epsilon^t(\bar v, \bar v')
  =\int_{L_\infty} \frac{1}{(\bar z-\bar v)(\bar z-\bar v')}e^{\frac{1}{3}(\bar z^3-\bar v^3)-y(\bar 
  z-\bar v)}d\bar z.
\end{align}
Now, from (\ref{lastlast}), we can conclude that

$$
K_\epsilon^t(\bar v,\bar v')\le \oldc{fds} e^{-\oldconstant{100} |\bar v|^3},
$$
for some constant $\newc\label{fds}>0$.
By Hadamard's inequality for the determinant,
this implies that

$$
{\rm det} (K_\epsilon^t(\bar w_i,\bar w_j)_{i,j=1}^n\le 
n^{n/2} \prod_{i=1}^n\oldc{fds} e^{-\frac{\oldconstant{100}}{2} |\bar w_i|^3}. 
$$
It follows from this bound, the convergence in (\ref{convconv}) and
the dominated convergence theorem that

\begin{align*}
&\lim_{t\to\infty}\int_{(W_\theta^L)^n}\det 
(K^{RW}_{y,\epsilon}(w_i,w_j)) 
                    dw_1\cdots dw_n\\
  &=
\int_{(W_\theta^\infty)^n}\det 
(K_{y,}(\bar w_i,\bar w_j)) 
d\bar w_1\cdots d\bar w_n.
\end{align*}
Applying a second time the dominated convergence theorem to
interchange
the summation of the Fredholm determinant with the limit, we
finish the proofs of both parts $(i)$ and $(ii)$ of Proposition \ref{p1}.

\medskip

\subsection{Proof of Lemma \ref{iv}}
\label{sp00}
Here we will prove Lemma \ref{iv}.
Define

$$
\Xi(x)=\frac{\psi_2(\gamma)}{4\theta}+\frac{\psi_3(\gamma)}{4!}=
\frac{1}{4}\sum_{n=0}^\infty\frac{1}{x_n^4}-\frac{1}{2}
\sum_{n=0}^\infty \frac{1}{\theta}\frac{1}{x_n^3}.
$$
 As 
 \begin{align*}
 &   \frac{\sigma^3(\theta)}{2\theta} + 
    \frac{h^{(4)}(\theta)}{4!} = \frac{h^{(3)}(\theta)}{4\theta} + 
                                 \frac{h^{(4)}(\theta)}{4!}\\
   &                           = \Xi(\theta+\upalpha)-\Xi(\theta)+\frac{\psi_1(\theta)-\psi_1(\theta+\upalpha)}{\psi_1(\theta)-\psi_1(\theta+\upalpha+\upbeta)}\big(\Xi(\theta)-\Xi(\theta+\upalpha+\upbeta)\big) 
  \end{align*}
  goes to 0 when $\upbeta\to 0$, it is enough to prove that its derivative with respect to $\upbeta$ is positive or, making the change $y=\theta+\upalpha+\upbeta$, that 
  \[
    \frac{\psi_1(\theta)-\psi_1(\theta+\upalpha)}{(\psi_1(\theta)-\psi_1(y))^2}\Big[-\Xi'(y)\big(\psi_1(\theta)-\psi_1(y)\big)+\psi_2(y)\big(\Xi(\theta)-\Xi(y)\big)\Big]>0, 
  \]
  for $y>2\theta$. 
  
  The first factor is positive and the second one equals 
  \begin{align*}
   & -\sum_{m\ge0}\bigg(\frac{3}{2\theta}-\frac{1}{y_m}\bigg)\frac{1}{y_m^4}\sum_{n\ge0}\bigg(\frac{1}{\theta_n^2}-\frac{1}{y_n^2}\bigg)\\
   & +\sum_{m\ge0}\frac{1}{y_m^3}\Bigg[\sum_{n\ge0}\bigg(\frac{1}{\theta}-\frac{1}{2\theta_n}\bigg)\frac{1}{\theta_n^3} - \sum_{n\ge0}\bigg(\frac{1}{\theta}-\frac{1}{2y_n}\bigg)\frac{1}{y_n^3}\Bigg]\\
   & =-\sum_{m\ge0}\frac{3y_m-2\theta}{2\theta y_m^5}\sum_{n\ge0}\frac{y_n^2-\theta_n^2}{\theta_n^2y_n^2} + \sum_{m\ge0}\frac{1}{y_m^3}\Bigg[\sum_{n\ge0}\frac{2\theta_n-\theta}{2\theta\theta_n^4} - \sum_{n\ge0}\frac{2y_n-\theta}{2\theta y_n^4}\Bigg]. 
  \end{align*}
  We will prove the positivity of the resulting products of terms in the sums. First products of same index terms ($m=n$) 
  \begin{align*}
   -\frac{3y_n-2\theta}{2\theta y_n^5}\frac{y_n^2-\theta_n^2}{\theta_n^2y_n^2} + \frac{1}{y_n^3}\frac{2\theta_n-\theta}{2\theta\theta_n^4} - \frac{1}{y_n^3}\frac{2y_n-\theta}{2\theta y_n^4}. 
  \end{align*}
  After factoring the common denominator we are left with 
  \begin{align*}
  &  -(3y_n-2\theta)(y_n^2-\theta_n^2)\theta_n^2+(2\theta_n-\theta)y_n^4-(2y_n-\theta)\theta_n^4\\
   & = -(3a+3t-2\theta)((a+t)^2-a^2)a^2+(2a-\theta)(a+t)^4-(2a+2t-\theta)a^4, 
  \end{align*}
  where we have abbreviated $\theta_n$ as $a$ and $\upalpha+\upbeta$ as $t$, so that $y_n=a+t$ and $t>\theta$. After expanding the fourth power binomial we see that the constant and linear in $t$ terms cancel out 
  \begin{align*}
   & -(3a+3t-2\theta)(2a+t)ta^2+(2a-\theta)(4a^3t+6a^2t^2+4at^3+t^4)-2ta^4\\
     &   = -(9at+3t^2-2t\theta)ta^2+(2a-\theta)(6a^2t^2+4at^3+t^4)\\
& =t^2\big[3a^3-4a^2\theta+(5a^2-4a\theta)t+(2a-\theta)t^2\big]. 
\end{align*}
As $a\ge\theta$ it is clear that this expresion is positive for $t>\theta$. 

Now the products of terms with different index in the above sums 
\begin{align*}
  &-\frac{3y_m-2\theta}{2\theta y_m^5}\ \,\frac{y_n^2-\theta_n^2}{\theta_n^2y_n^2} + \frac{1}{y_m^3}\ \frac{2\theta_n-\theta}{2\theta\theta_n^4} -  \frac{1}{y_m^3}\ \frac{2y_n-\theta}{2\theta y_n^4}\\
  &-\frac{3y_n-2\theta}{2\theta y_n^5}\ \,\frac{y_m^2-\theta_m^2}{\theta_m^2y_m^2} + \frac{1}{y_n^3}\ \frac{2\theta_m-\theta}{2\theta\theta_m^4} -  \frac{1}{y_n^3}\ \frac{2y_m-\theta}{2\theta y_m^4}. 
\end{align*}
The first half can be written as 
\begin{align*}
 & -\frac{(3y_m-2\theta)(y_n^2-\theta_n^2)y_n^2}{2\theta y_m^5\theta_n^2y_n^4}+\frac{(2\theta_n-\theta)y_n^4-(2y_n-\theta)\theta_n^4}{2\theta y_m^3\theta_n^4y_n^4}\\
 & =  \frac{-(3y_m-2\theta)(y_n^2-\theta_n^2)y_n^2+(3y_n-2\theta)(y_n^2-\theta_n^2)y_m^2}{2\theta y_m^5\theta_n^2y_n^4}\\
 & +\,\frac{-(3y_n-2\theta)(y_n^2-\theta_n^2)\theta_n^2+(2\theta_n-\theta)y_n^4-(2y_n-\theta)\theta_n^4}{2\theta y_m^3\theta_n^4y_n^4}. 
\end{align*}
The last numerator was already shown to be positive. By a similar reasoning for the second half of the above expression, all that is left to finish the proof is to show that 
\begin{align*}
  &\frac{-(3y_m-2\theta)(y_n^2-\theta_n^2)y_n^2+(3y_n-2\theta)(y_n^2-\theta_n^2)y_m^2}{2\theta y_m^5\theta_n^2y_n^4}\\
  &+\frac{-(3y_n-2\theta)(y_m^2-\theta_m^2)y_m^2+(3y_m-2\theta)(y_m^2-\theta_m^2)y_n^2}{2\theta y_n^5\theta_m^2y_m^4}
\end{align*}
is positive for $y>2\theta$. 

This is equivalent to showing the positivity of 
\begin{align*}
  &-(3y_m-2\theta)(y_n^2-\theta_n^2)y_n^3\theta_m^2+(3y_n-2\theta)(y_n^2-\theta_n^2)y_m^2y_n\theta_m^2\\
  &-(3y_n-2\theta)(y_m^2-\theta_m^2)y_m^3\theta_n^2+(3y_m-2\theta)(y_m^2-\theta_m^2)y_n^2y_m\theta_n^2 
\end{align*}
Factorizing and using the abreviations $a=\theta_n$ and $b=\theta_m$, 
\begin{align*}
 &   \big[(y_n^2-\theta_n^2)\theta_m^2y_n - (y_m^2-\theta_m^2)\theta_n^2y_m\big]\big[(3y_n-2\theta)y_m^2 - (3y_m-2\theta)y_n^2\big]\\
   & =\big[(2a+t)tb^2(a+t) - (2b+t)ta^2(b+t)\big]\\
    &\times\big[3(a+t)(b+t)(b-a)-2\theta(a+b+2t)(b-a)\big]. 
  \end{align*}
  A further algebraic manipulation leads us to the following expresion, which is positive under the stated conditions. 
  \begin{align*}
    \big[(a+b)t^3+3abt^2\big]\big[3ab+3t^2+(a+b)(3t-2\theta)-4\theta t\big](b-a)^2 
  \end{align*}

    \medskip
    
\subsection{ Proof of Lemmas \ref{fo} and \ref{im}}
\label{lemmasfo}
 In what follows we present the proofs of Lemma \ref{fo} in Section
\ref{sfo} and of Lemma \ref{im} in Section \ref{sim}.

\medskip
\subsubsection{Proof of Lemma \ref{fo}}
\label{sfo}
The main technical ingredient in the proof of Lemma \ref{fo} will
be to extract the zero of $\Re(iz h'(z))$ for $z=e^{i\phi}$ at $\phi=0$,
through a subtraction of appropriate functions. This enables us to
extract two key factors ($\sin\phi$ and $(1-\cos\phi)$) from
$\Re(izh'(z))$. We will start
deducing several properties of a resulting function in these computations, defined
for  $x>0$,

\begin{equation}
  \label{mathcalp}
\mathcal P(x)=-\sum_{n=0}^\infty 
\frac{\theta^2+2\theta x_n\cos\phi}{              (\theta^2+2\theta x_n\cos\phi+x_n^2)(\theta+x_n)^2}, 
\end{equation}
where we adopt the convention $x_n=n+x$.

\medskip

\begin{lemma}
  \label{pestimate} Let $\theta\in (0,0.5)$. Then, the following are satisfied,

  \begin{itemize}

  \item[(i)] For all $\phi\in (0,\pi)$ such that $\cos\phi\ge 0$, the 
    function $-\mathcal P(x)$ is positive and decreasing in $x>0$. 

      \item[(ii)] For all $\phi\in (0,\pi)$ such that $\cos\phi\le-\frac{\theta}{2x} $, the 
    function $-\mathcal P(x)$ is negative.

    \item[(iii)] For all $\phi\in (0,\pi)$ and $x>0$ such that
      $\cos\phi\le-\frac{\theta}{x}$,     $-\mathcal P(x+y)$ is  increasing in $y>0$. 

    \item[(iv)] For all $\phi\in (0,\pi)$, $x\ge\theta$ and $y>0$ we
      have that
      
  \begin{align*}
    &  -(\mathcal P(x)-\mathcal P(x+y))\\
    &\le v(\rho,\phi) \mathbf 1(\cos\phi>-\rho) 
      \left(\Psi_1(\theta+x)-\Psi_1(\theta+x+y)\right), 
  \end{align*}
  where 

  $$
  v(\rho,\phi)=\frac{\rho^2+2\rho\cos\phi+\rho}{\rho^2+2\rho\cos\phi+1}\le \frac{\rho^2+3\rho}{(\rho+1)^2}, 
    $$
    and $\rho=\frac{\theta}{x}$. 
\end{itemize}
  \end{lemma}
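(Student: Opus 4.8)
The plan is to analyze the function $\mathcal P(x)$ in \eqref{mathcalp} term by term, writing $\mathcal P(x) = -\sum_{n\ge 0} p(x_n)$ where
$$
p(u) := \frac{\theta^2 + 2\theta u\cos\phi}{(\theta^2 + 2\theta u\cos\phi + u^2)(\theta+u)^2},
$$
and noting that the denominator $\theta^2+2\theta u\cos\phi+u^2 = (\theta+u)^2 - 2\theta u(1-\cos\phi) > 0$ is always positive for $u>0$, $\phi\in(0,\pi)$, so $p(u)$ is well defined and smooth. For part $(i)$, when $\cos\phi\ge 0$ the numerator $\theta^2+2\theta u\cos\phi$ is positive, so each $-p(x_n) < 0$, whence $-\mathcal P(x) = \sum_n p(x_n) > 0$. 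Monotonicity in $x$ then follows if I show $p(u)$ is decreasing in $u>0$: I would differentiate (or, to avoid the grind, bound) and observe that both $\frac{\theta^2+2\theta u\cos\phi}{\theta^2+2\theta u\cos\phi+u^2}$ and $\frac{1}{(\theta+u)^2}$ are positive and decreasing in $u$ when $\cos\phi\ge 0$ (the first because writing it as $(1+\frac{u^2}{\theta^2+2\theta u\cos\phi})^{-1}$ and checking $u\mapsto u^2/(\theta^2+2\theta u\cos\phi)$ is increasing), so the product is decreasing; summing over $n$ and shifting the index gives that $-\mathcal P(x)$ is decreasing in $x$.

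For part $(ii)$, the hypothesis $\cos\phi \le -\frac{\theta}{2x}$ forces $\theta^2+2\theta x_n\cos\phi \le \theta^2 + 2\theta x_n(-\frac{\theta}{2x}) = \theta^2(1 - \frac{x_n}{x}) \le 0$ for all $n\ge 0$ (since $x_n\ge x$), so each numerator is $\le 0$, hence each $-p(x_n)\ge 0$ and $-\mathcal P(x) = \sum_n p(x_n) \le 0$ — actually strictly negative unless all terms vanish, which one rules out. For part $(iii)$, under $\cos\phi\le-\frac{\theta}{x}$ a similar sign analysis gives $\theta^2+2\theta u\cos\phi \le \theta^2(1 - \frac{2u}{x}) < 0$ for $u\ge x$, so $p(u)<0$ there; I then want $-\mathcal P(x+y) = \sum_n p(x+y+n)$ increasing in $y$, which I get by showing the (now negative) function $p(u)$ is increasing in $u$ on $u>x$ — again by factoring $p$ into the product of a negative increasing factor and a positive decreasing factor and checking the product is increasing, or more cleanly by a direct derivative-sign computation valid in the regime $2\theta u\cos\phi \le -\theta^2$.

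The main obstacle is part $(iv)$, the quantitative comparison with the trigamma increment $\Psi_1(\theta+x) - \Psi_1(\theta+x+y) = \sum_n\big(\frac{1}{(\theta+x+n)^2} - \frac{1}{(\theta+x+y+n)^2}\big)$. The strategy is to compare summand by summand: fixing $n$ and setting $u = x_n$, $u' = x_n + y$, I want
$$
p(u) - p(u') \le v(\rho,\phi)\,\mathbf 1(\cos\phi > -\rho)\left(\frac{1}{(\theta+u)^2} - \frac{1}{(\theta+u')^2}\right)
$$
with $\rho = \theta/x$. Writing $p(u) = q(u)\cdot\frac{1}{(\theta+u)^2}$ where $q(u) = \frac{\theta^2+2\theta u\cos\phi}{\theta^2+2\theta u\cos\phi+u^2}$, I would use the decomposition $p(u)-p(u') = q(u)\big(\frac{1}{(\theta+u)^2}-\frac{1}{(\theta+u')^2}\big) + \big(q(u)-q(u')\big)\frac{1}{(\theta+u')^2}$; since $q(u)\le q(\theta)$ when $q$ is decreasing (which holds outside the sign-flip region), and since $q(\theta) = \frac{\theta^2(1+2\cos\phi)}{\theta^2(2+2\cos\phi)}$ — wait, more carefully $q(\theta)=\frac{\theta^2+2\theta^2\cos\phi}{\theta^2+2\theta^2\cos\phi+\theta^2}=\frac{1+2\cos\phi}{2+2\cos\phi}$, and one must rewrite this in terms of $\rho$ by using $u\ge x$ i.e. $\theta/u\le\rho$ to get the stated bound $v(\rho,\phi)=\frac{\rho^2+2\rho\cos\phi+\rho}{\rho^2+2\rho\cos\phi+1}$ — one checks that $q(u) \le v(\rho,\phi)$ for $u\ge x$ by monotonicity of $q$ in $u$ combined with monotonicity in the parameter $\theta/u$. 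The second term $q(u)-q(u')$ is handled by noting $q$ is decreasing so $q(u)-q(u')\ge 0$ only in the flipped regime; one shows it contributes nonpositively in the regime where the indicator is active, or is absorbed. The indicator $\mathbf 1(\cos\phi>-\rho)$ appears precisely because when $\cos\phi\le-\rho$ the left side $-(\mathcal P(x)-\mathcal P(x+y))$ is already $\le 0$ by an argument like parts $(ii)$–$(iii)$, so the inequality is trivial. Finally the elementary bound $v(\rho,\phi)\le\frac{\rho^2+3\rho}{(\rho+1)^2}$ follows from $\cos\phi\le 1$ in the numerator and $\cos\phi>-\rho$, i.e. $2\rho\cos\phi > -2\rho^2$, hmm — more simply, maximize over $\cos\phi\in(-\rho,1]$: the map $c\mapsto\frac{\rho^2+2\rho c+\rho}{\rho^2+2\rho c+1}=1-\frac{1-\rho}{\rho^2+2\rho c+1}$ is increasing in $c$ when $\rho<1$, so the sup is at $c=1$ giving $\frac{\rho^2+3\rho}{(\rho+1)^2}$, completing the proof.
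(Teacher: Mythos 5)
The paper proves all four parts of the Lemma via a single algebraic identity: writing the summand $\frac{\theta C(u)}{A(u)B(u)}$ with $C(u)=\theta+2u\cos\phi$, $A(u)=\theta^2+2\theta u\cos\phi+u^2$, $B(u)=(\theta+u)^2$, it shows that the termwise difference factors \emph{exactly} as
\[
\frac{C(x_n)}{A(x_n)B(x_n)}-\frac{C((x{+}y)_n)}{A((x{+}y)_n)B((x{+}y)_n)}
=(\mathfrak a+\mathfrak b+\mathfrak c)\left(\frac{1}{B(x_n)}-\frac{1}{B((x{+}y)_n)}\right),
\]
with explicit closed forms for $\mathfrak a,\mathfrak b,\mathfrak c$ and, crucially, an explicit formula for $\mathfrak a+\mathfrak c$. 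Sign analysis of $\mathfrak a+\mathfrak c$ and $\mathfrak b$ gives (i)–(iii), and an upper bound $(\mathfrak a+\mathfrak b+\mathfrak c)\theta\le f_3(\rho_n,\cos\phi)$ plus monotonicity of $f_3$ gives (iv).

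Your treatment of (i)–(iii) is a different but valid route: you analyze the single-term function $p(u)=q(u)/(\theta+u)^2$, determine the sign of $q$ and of $q'$ directly in each regime (using $q'(u)\propto -(\theta+u\cos\phi)$), and conclude monotonicity of $p$ from the product of factors with appropriate monotonicity and sign. This is more elementary and avoids the algebraic bookkeeping; I checked your sign calculations and they are correct (including the case analysis in (iii), where $q<0$ increasing times $(\theta+u)^{-2}>0$ decreasing indeed gives $p$ increasing).

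Part (iv), however, has a genuine gap. Your decomposition
\[
p(u)-p(u')=q(u)\left(\tfrac1{B(u)}-\tfrac1{B(u')}\right)+\bigl(q(u)-q(u')\bigr)\tfrac1{B(u')}
\]
is \emph{not} the paper's exact factoring, and the second term is not controlled. You assert the bound $q(u)\le v(\rho,\phi)$ for $u\ge x$ (which is true and handles the first piece), but you then need $(q(u)-q(u'))/B(u')\le 0$, or else that the slack $q(u)<v(\rho,\phi)$ absorbs it. Neither holds uniformly: when $\cos\phi\in(-\rho,0)$ the derivative $q'(u)$ changes sign at $u=\theta/|\cos\phi|$, which can lie strictly inside the summation range since $x_n=x+n$ grows without bound; for those $n$ with $x_n,\,x_n{+}y$ below the threshold, $q$ is still decreasing, so $q(u)-q(u')>0$ and the second term is strictly positive. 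You acknowledge this (``or is absorbed'') but give no argument, and the absorption is not routine because the first term's slack $v(\rho,\phi)-q(u)$ is not obviously comparable, term by term, to $(q(u)-q(u'))/B(u')$ divided by $\frac1{B(u)}-\frac1{B(u')}$. The paper sidesteps this entirely: its $\mathfrak a+\mathfrak b+\mathfrak c$ \emph{is} the exact ratio of the two differences, so it only needs a one-sided bound on that single quantity, which it establishes through monotonicity of the helper functions $f_1,f_2,f_3$. To close your version of (iv), you would either need to reproduce something like that exact factoring, or switch to an integral representation $p(u)-p(u')=-\int_u^{u'}p'(v)\,dv$ and compare the integrands pointwise to $2v(\rho,\phi)/(\theta+v)^3$, which amounts to proving $q(v)-\tfrac12 q'(v)(\theta+v)\le v(\rho,\phi)$ for $v\ge x$ — a nontrivial claim you have not established.

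The closing elementary bound $v(\rho,\phi)\le\frac{\rho^2+3\rho}{(\rho+1)^2}$ via monotonicity in $\cos\phi$ is correct.
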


  \begin{proof} The positivity of $-\mathcal P(x)$ for $\cos\phi\ge 0$
    of part $(i)$ is immediate. Also the negativity of $-\mathcal
    P(x)$
    for $\cos\phi\le-\frac{\theta}{2x}$, which proves part $(ii)$. We
    will now prove that $-\mathcal P(x)$ is decreasing in $x$ for
    $\cos\phi\ge 0$ (part $(i)$), part $(iii)$ and part $(iv)$. To simplify notation define 

    $$
C(u):=\theta+2 u\cos\phi, 
$$

$$
A(u):=\theta^2+2\theta u\cos\phi+u^2 
$$
and 

$$
B(u)=(\theta+u)^2. 
$$
Then note that 
    \begin{align*}
      &    \frac{ C(x_n) 
        }{A(x_n)B(x_n)}-\frac{C( (x+y)_n) 
                        }{A((x+y)_n)B((x+y)_n)}\\
&                        =
                        \frac{\theta+2x_n\cos\phi}{A(x_n)}\left(\frac{1}{B(x_n)}-\frac{1}{B((x+y)_n)}\right)\\
&                      -\frac{2 y\cos\phi}{A(x_n)}\frac{1}{B((x+y)_n)}
                                                                                                    +\frac{\theta+2(x+y)_n\cos\phi }{A((x+y)_n)}
                                                                                                    \frac{2y(\theta\cos\phi +x_n)+y^2 
                                                                                                                 }{A(x_n)B((x+y)_n)}\\
       &=
        \frac{\theta+2 x_n\cos\phi}{A(x_n)}\left(\frac{1}{B(x_n)}-\frac{1}{B((x+y)_n)}\right) \\
&        -\frac{2\cos\phi}{2\theta +2x_n+y}
        \frac{B(x_n)}{A(x_n)}\frac{2y\theta +2x_ny+y^2 
        }{B(x_n)B((x+y)_n)}\\
      &        +\frac{\theta+2 (x+y)_n\cos\phi}{A((x+y)_n)}
        \frac{B(x_n)}{A(x_n)}\frac{ 2(\theta\cos\phi +x_n)+y}{2\theta+2x_n+y}
        \frac{2\theta y+2x_ny+y^2 
                                                                               }{B(x_n)B((x+y)_n)}\\
     &=
        \left( \mathfrak b+\mathfrak c+\mathfrak a\right) 
        \left(\frac{1}{B(x_n)}-\frac{1}{B((x+y)_n)}\right), 
    \end{align*}
    where 

    $$
\mathfrak b=\frac{\theta+2 x_n\cos\phi}{\theta^2+2 x_n\theta\cos\phi+x_n^2}, 
$$

$$
\mathfrak c=-\frac{2\cos\phi}{2\theta+2x_n+y}
\frac{(\theta+x_n)^2}{\theta^2+2\theta x_n\cos\phi+x_n^2}
$$
and 

\begin{align*}
  &\mathfrak a\\
  &=\frac{\theta+2 (x_n+y)\cos\phi}{\theta^2+2\theta(x_n+y)\cos\phi+(x_n+y)^2}
        \frac{(x_n+\theta)^2}{\theta^2+2\theta x_n\cos\phi+x_n^2}\frac{ 2(\theta\cos\phi 
          +x_n)+y}{2(\theta+x_n)+y}. 
        \end{align*}
        Note that

        \begin{align}
          \nonumber
                &\mathfrak a+\mathfrak c\\
            \label{a+c}=&\frac{(2(x_n+y)(\theta+x_n\cos\phi)-\theta 
        y)(x_n+\theta)^2}
        {(\theta^2+2\theta x_n\cos\phi+x_n^2) 
          (\theta^2+2\theta(x_n+y)\cos\phi+(x_n+y)^2)(2\theta+2x_n+y)}.
        \end{align}
        From (\ref{a+c}) we can easily check that
        $\mathfrak 
        a+\mathfrak c\ge 0$ whenever 
$\cos\phi\ge 0$,
 which combined with the fact that the condition on $\phi$ also
implies that
$\mathfrak b\ge 0$,
proves that $-\mathcal P(x)$ is decreasing when $\cos\phi\ge 0$,
and proves part $(i)$. On the other hand, (\ref{a+c})
also shows that $\mathfrak a+\mathfrak c\le 0$ when
$\cos\phi\le-\frac{\theta}{x}$,
which combined with the fact that this condition also implies
that $\mathfrak b\le 0$, implies that $-\mathcal P(x+y)$ is increasing
in $y$ when $\cos\phi\le -\frac{\theta}{x}$ (part $(ii)$).

        Let us now prove part $(iii)$. Note that

        \begin{align*}
        &
       \mathfrak a+\mathfrak c\le\frac{2(x_n+y)(\theta+x_n\cos\phi)(x_n+\theta)^2}
        {(\theta^2+2\theta x_n\cos\phi+x_n^2) 
          (\theta^2+2\theta(x_n+y)\cos\phi+(x_n+y)^2)(2\theta+2x_n+y)}. 
        \end{align*}
        Now note that for all $\phi\in (0,\pi)$, the function 

        $$
f_1(v)=\frac{v}{\theta^2+2\theta v\cos\phi+v^2 }, 
        $$
is  decreasing in $v$ as long as $v\ge\theta$. Therefore, since by 
assumption 
we have that $x\ge\theta$, it follows that $x_n+y\ge\theta$, so that 
for all $n\ge 0$, 

        $$
     (  \mathfrak a+\mathfrak c)\theta\le\frac{\theta x_n(\theta+x_n\cos\phi)(x_n+\theta)}
     {(\theta^2+2\theta x_n\cos\phi+x_n^2)^2}
     \le 
     \frac{\rho_n 
       (\rho_n+\cos\phi)(\rho_n+1)}{(\rho_n^2+2\rho_n\cos\phi+1)^2}
     1(\cos\phi>-\rho), 
     $$
     where $\rho_n:=\frac{\theta}{x_n}$. Now, consider the function

     $$
f_2(u,a):=    \frac{
      u+a}{u^2+2ua+1}. 
    $$
    Note that for $u\in (0,1)$, 

    $$
    \frac{\partial f_2(u,a)}{\partial a}
    =\frac{1-u^2}{(u^2+2ua+1)^2}>0, 
    $$
    which shows that for fixed $u$, $f_2(u,a)$  is increasing in 
    $a$. Hence, for all $n\ge 0$,

    $$
    \frac{\rho_n(\rho_n+\cos\phi)(\rho_n+1)}{(\rho_n^2+2\rho_n\cos\phi+1)^2}
    \le 
    \frac{\rho_n}{\rho_n^2+2\rho_n\cos\phi+1}. 
    $$
        It follows that

    $$
    (\mathfrak a+\mathfrak b+\mathfrak c)\theta\le 
    f_3(\rho_n,\cos\phi) {\mathbf 1}(\cos\phi>-\rho), 
    $$
    where we define for $u\in [0,1]$ and $a\in (-1,1)$,

    $$
f_3(u,a)=\frac{u^2+2ua+u}{u^2+2ua+1}. 
$$
Now note that for $u\in (0,1)$ and $a$ arbitrary,

$$
\frac{\partial f_3(u,a)}{\partial a}=\frac{2u(1-u)}{(u^2+2ua+1)^2}>0. 
$$
Hence,

$$
f_3(\rho_n,a)\le f_3(\rho,a).
$$
Now for $a\in  (-u,1)$ and $u\in (0,1)$, 

$$
\frac{\partial f_3(u,a)}{\partial u}=\frac{1+2u+2a-u^2}{(u^2+2ua+1)^2}>0. 
$$
It follows that $f_3(u,a)$ is increasing in $a$ for fixed $u$ and 
increasing in $u$ for fixed $a$ as long as $u,a\in (0,1)$. 
Therefore, since $\rho_n\le\rho$,  
 we conclude that 

$$
(\mathfrak a+\mathfrak b+\mathfrak c)\theta\le 
\frac{\rho^2+2\rho\cos\phi+\rho}{\rho^2+2\rho\cos\phi+1}
{\mathbf 1}(\cos\phi>-\rho)\le \frac{\rho^2+3\rho}{(\rho+1)^2}
{\mathbf 1}(\cos\phi>-\rho). 
$$
    \end{proof}

\medskip 

Let us now proceed to prove Lemma \ref{fo}. Let $z=\theta e^{i\phi}$. Note that 

\begin{align}
  \nonumber 
  \Re(iz h'(z))&=\Theta(\upalpha)-\Theta(\upalpha+\upbeta)\\
      \nonumber 
&+\frac{\Psi_1(\upalpha+\theta)-\Psi_1(\upalpha+\upbeta+\theta)}{
  \Psi_1(\theta)-\Psi_1(\upalpha+\upbeta+\theta)}\left(
\Theta(\upalpha+\upbeta)-\Theta(0)\right), 
\end{align}
where for $\gamma$ real we define 

$$
\Theta(\gamma):=\Re(iz(\Psi_1(z+\gamma)-\Psi_1(\theta+\gamma)). 
$$
Now, note that for $\gamma>0$, 
we have the following expansion valid for any $z\notin 
\{0,-1,-2,\dots\}$, 

  \begin{equation}
    \label{five}
\Psi(z+\gamma)-\Psi(\theta+\gamma)=\sum_{n\ge0}\frac{z-\theta}{(z+\gamma+n)(\theta+\gamma+n)}. 
  \end{equation}
Also, 
  \begin{align*}
    &Re\left(iz\frac{z-\theta}{\gamma_n+z}\right) \\
    &= Re\left( i\theta^2 
    \frac{(\cos\phi+i\sin\phi)(\cos\phi-1+i\sin\phi)(\theta\cos\phi+\gamma_n 
      -i\theta\sin\phi)}{(\theta\cos\phi+\gamma_n)^2 
      +\theta^2\sin^2\phi}\right)\\ & = Re\left( i\theta^2 
    \frac{\left(\cos^2\phi-\sin^2\phi-\cos\phi +i\sin\phi(2\cos\phi-1) 
      \right) (\theta\cos\phi+\gamma_n-i\theta\sin\phi) 
    }{(\theta\cos\phi+\gamma_n)^2 +\theta^2\sin^2\phi}\right)\\ &
    =\theta^2\frac{\theta\sin\phi(\cos^2\phi-\sin^2\phi-\cos\phi) 
      -\sin\phi(2\cos\phi-1)(\theta\cos\phi+\gamma_n)}{
      (\theta\cos\phi+\gamma_n)^2 +\theta^2\sin^2\phi}\\ &
    =\theta^2\sin\phi \frac{-\theta -\gamma_n(2\cos\phi-1)}{ \theta^2 
      +2\theta \gamma_n\cos\phi+\gamma_n^2}. 
  \end{align*}
This implies that 

\begin{equation}
  \label{eqth}
\Theta(\gamma)=\theta^2\sin\phi \mathcal R(\gamma), 
\end{equation}
where 

$$
\mathcal R(\gamma):=-\sum_{n=0}^\infty\frac{\theta+\gamma_n(2\cos\phi-1)}{
  (\theta^2+2\theta\gamma_n\cos\phi+\gamma_n^2)(\theta+\gamma_n)}. 
$$
Hence it is enough to prove that for $\phi\in (0,\pi)$, 

$$
\mathcal R (\upalpha)-\mathcal R(\upalpha+\upbeta) 
+\frac{\Psi_1(\upalpha+\theta)-\Psi_1(\upalpha+\upbeta+\theta)}{
  \Psi_1(\theta)-\Psi_1(\upalpha+\upbeta+\theta)}\left(
\mathcal R(\upalpha+\upbeta)-\mathcal R(0)\right)>0. 
$$
On the other hand note that 

\begin{align}
\nonumber &\mathcal R(\gamma)+\Psi_1(\theta+\gamma)\\
\nonumber &=\sum_{n=0}^\infty\left(\frac{1}{(\theta+\gamma_n)^2}
  -\frac{\theta+\gamma_n(2\cos\phi-1)}{
                                                            (\theta^2+2\theta\gamma_n\cos\phi+\gamma_n^2)(\theta+\gamma_n)}\right)\\
\label{eqr}  &=
2     (1-\cos\phi)\mathcal Q(\gamma), 
\end{align}
where we define 

$$
\mathcal Q(\gamma)=\sum_{n=0}^\infty\frac{\gamma_n^2}{
  (\theta^2+2\theta\gamma_n\cos\phi+\gamma_n^2)(\theta+\gamma_n)^2}. 
$$
Therefore it is now enough to prove that 
 for $\phi\in (0,\pi)$, 

$$
\mathcal Q (\upalpha)-\mathcal Q(\upalpha+\upbeta) 
+\frac{\Psi_1(\upalpha+\theta)-\Psi_1(\upalpha+\upbeta+\theta)}{
  \Psi_1(\theta)-\Psi_1(\upalpha+\upbeta+\theta)}\left(
\mathcal Q(\upalpha+\upbeta)-\mathcal Q(0)\right)>0. 
$$
Now note that 

\begin{align}
\nonumber &\mathcal Q(\gamma)-\Psi_1(\theta+\gamma)\\
\nonumber&=
 \sum_{n=0}^\infty\left(
  \frac{\gamma_n^2}{
                                              (\theta^2+2\theta\gamma_n\cos\phi+\gamma_n^2)(\theta+\gamma_n)^2}
                                              -\frac{1}{(\theta+\gamma_n)^2}\right)\\
 \label{eqq} &=
\mathcal P(\gamma), 
\end{align}
where we recall the definition of $\mathcal P$ in (\ref{mathcalp}) above. 
So at this point it is enough to prove that for $\phi\in (0,\pi)$, 

\begin{equation}
  \label{newproof}
\mathcal P (\upalpha)-\mathcal P(\upalpha+\upbeta) 
+\frac{\Psi_1(\upalpha+\theta)-\Psi_1(\upalpha+\upbeta+\theta)}{
  \Psi_1(\theta)-\Psi_1(\upalpha+\upbeta+\theta)}\left(
\mathcal P(\upalpha+\upbeta)-\mathcal P(0)\right)>0. 
\end{equation}
Indeed, the lower bound in (\ref{5.5a})  now follows from the fact that
(\ref{newproof}), 
(\ref{eqth}), (\ref{eqr}) and (\ref{eqq}) imply that

\begin{align*}
  &\Re (iz h'(z))=\\
  &2\theta^2\sin\phi(1-\cos\phi)\left(\mathcal
P(\upalpha)
-\mathcal
    P(\upalpha+\upbeta)+\right.\\
  &\left.\frac{\Psi_1(\upalpha+\theta)-
\Psi_1(\upalpha+\upbeta+\theta)}{\Psi_1(\theta)-\Psi_1(\upalpha+\upbeta+\theta)}(\mathcal
P(\upalpha+\upbeta)-\mathcal P(0))
\right).
\end{align*}
From here we see that for $\theta\in [0,0.5]$ ($\theta=0$ interpreted
as the limit when $\lim_{\theta\to 0^+}$), $\upalpha\le 1$ and
$\upbeta\le 1$, the factor multiplying $2\theta^2\sin\phi
(1-\cos\phi)$ in the above expression, is positive, while as
$\upalpha\to\infty$ and $\upbeta\to\infty$, the dominant term in this
factor
is $-\frac{\mathcal
  P(0)}{\Psi_1(\theta)}(\Psi_1(\upalpha+\theta)-\Psi_1(\upalpha+\upbeta+\theta))$,
which gives by compactness the lower bound of (\ref{5.5a}).
Let us now introduce the parameter $\rho:=\frac{\theta}{\upalpha}$.  

We will now continue with the proof of (\ref{newproof}) which
will be divided in four cases: Case 1, when $\cos\phi\le-\rho$;
Case 2, when $-\rho\le\cos\phi\le -\rho/2$;  Case 3, when $-\rho/2
\le\cos\phi\le 0$; and Case 4 when
$0\le\cos\phi\le 1$. 
To do this, first  note that

\begin{equation}
  \label{pos}
-\mathcal P(0)=\sum_{n=0}^\infty\frac{\theta^2+2\theta n\cos\phi}{
  (\theta^2+2\theta n\cos\phi +n^2)(\theta+n)^2}. 
\end{equation}
In what follows we will assume that $\rho\le 1$.
We will use the following consequence of part $(iv)$ of Lemma
\ref{pestimate},
valid for all 
$\theta<\min(0.5,\upalpha)$ and $\upbeta>0$, 

\begin{align}
 \nonumber &-\left(\mathcal P(\upalpha)-\mathcal P(\upalpha+\upbeta)\right)\\
   &
     \label{leftl}
     \le 
\frac{\rho^2+2\rho\cos\phi+\rho}{\rho^2+2\rho\cos\phi+1}{\mathbf 1}(\cos\phi>-\rho) 
                    \left(\Psi_1(\theta+\upalpha)\! -\!\Psi_1(\theta+\upalpha+\upbeta)\right). 
\end{align}

\medskip

\noindent {\bf Case 1} ($\cos\phi\le-\rho$).
By part $(ii)$ of Lemma \ref{pestimate}, as well as by
(\ref{leftl}),
the term $-(\mathcal P(\upalpha)-\mathcal
P(\upalpha+\upbeta))\le 0$ and $\mathcal P(\upalpha+\upbeta)\ge 0$.
Therefore it is enough to prove that $-\mathcal P(0)>0$ (whenever
$\theta\in (0,1)$).
Note that the series (\ref{pos})  achieves it's minimum value at $\phi=-\pi$, so 
we have 

\begin{align}
  \nonumber 
&-\mathcal P(0)\ge \sum_{n=0}^\infty\frac{\theta^2-2\theta 
  n}{(n^2-\theta^2)^2}\\
  \nonumber&=\frac{1}{\theta^2}+\frac{\theta^2-2\theta}{(1-\theta^2)^2}
-\sum_{n=2}^\infty 
\frac{2\theta n-\theta^2}{(n^2-\theta^2)^2}\\
  &
 \nonumber
    \ge 
\frac{1}{\theta^2}+\frac{\theta^2-2\theta}{(1-\theta^2)^2}
-\sum_{n=2}^\infty 
    \frac{ n-\frac{1}{4}}{\left(n^2-\frac{1}{4}\right)^2}
    \ge 
    \frac{1}{\theta^2}+\frac{\theta^2-2\theta}{(1-\theta^2)^2}
    -0.2\\
  \label{poo}
  &  \ge 4-\frac{4}{3}-\frac{1}{5}>0,
\end{align}
where we used in the last inequality the fact that the terms in the 
series are decreasing in $\theta$ and the minimum value is achieved 
for $\theta=0.5$. 

\medskip

\noindent {\bf Case 2} ($-\rho\le \cos\phi\le -\rho/2$).
By the assumption $\cos\phi\le-\frac{\rho}{2}$ and part $(ii)$
 Lemma \ref{pestimate}, we see that  $-\mathcal P(\upalpha+\upbeta)\le 0$.
 This time,  the
 series (\ref{pos})  achieves it's minimum value $\cos\phi=-\rho$. But
 we
 will assume that $\rho\le 0.72$, so  
we see from (\ref{poo}), 
 that it is enough to prove that

 \begin{align*}
   &\frac{1}{\Psi_1(\theta)-\Psi_1(\upalpha+\upbeta+\theta)}\left(\frac{1}{\theta^2}
     +\frac{\theta^2-2\theta\times 
   0.72}{\left(\theta^2-2\theta\times 
   0.72+1\right)(1+\theta)^2}
   -0.2\right)\\
   &\ge 
\frac{\rho^2+2\rho\cos\phi+\rho}
{\rho^2+2\rho\cos\phi+1}\ge\rho. 
 \end{align*}
Now,
$\Psi_1(\theta)-\Psi_1(\theta+\upalpha)\ge\Psi_1(\theta)$.
Hence, it would be enough to prove that

$$
\frac{1}{\theta^2\Psi_1(\theta)}
\left(1-\theta^2 \frac{2\theta\times 
   0.72-\theta^2}{\left(\theta^2-2\theta\times 
   0.72+1\right)(1+\theta)^2}
-0.2\times\theta^2\right)
\ge\rho. 
 $$
 Now, since $\theta^2\Psi_1(\theta)$ is increasing in $\theta$, we
 see that the inequality is satisfied for all $\rho\in (0,0.7]$ such
 that

 $$
 \frac{1}{\frac{1}{4}\Psi_1\left(\frac{1}{2}\right)}
\left(1-\frac{1}{4} \frac{ 
   0.72-\frac{1}{4}}{\left(\frac{1}{4}-
   0.72+1\right)\left(1+\frac{1}{2}\right)^2}
-0.2\times\frac{1}{4}\right)
\ge\rho,
 $$
 or

 $$
\rho\le 0.74124\cdots.
$$

\medskip

\noindent {\bf Case 3} ($-\rho/2\le\cos\phi\le 0$).
Let us first bound,

\begin{align}
    \nonumber 
&-\mathcal 
                 P(\upalpha+\upbeta)\le \frac{\theta^2}
                 {(\theta^2+(\upalpha+\upbeta)^2)(\upalpha+\upbeta+\theta)^2}\\
  \nonumber
  &                 +
\sum_{n=1}^\infty\frac{\theta^2}{(\theta^2+
               (\upalpha+\upbeta)_n^2)((\upalpha+\upbeta)_n+\theta)^2}\\
  \label{here}&
    =\frac{1}{\theta^2}\frac{\rho^2}{(1+\rho^2)(1+\rho)^2}
    +\frac{\theta^2}{\theta^2+1}\Psi_1\left(1\right).
\end{align}
On the other hand,

\begin{align*}
&-\mathcal
P(0)\ge\sum_{n=0}^\infty\frac{\theta^2-\theta n}{(\theta^2-\theta
  n+n^2)(\theta+n)^2}\\
&=\frac{1}{\theta^2}-\frac{\theta-\theta^2}{(1-(\theta-\theta^2))(\theta+1)^2}
-\sum_{n=2}^\infty\frac{n\theta-\theta^2}{(n^2-(n\theta
                          -\theta^2))(\theta+n)^2}\\
  &\ge
    \frac{1}{\theta^2}-\frac{1}{3}-\sum_{n=2}^\infty\frac{2n-1}
    {\left(4n^2-2n+1\right)n^2}\ge
     \frac{1}{\theta^2}-\frac{1}{3}-0.65.
\end{align*}
As in case 2, we now see that it would be enough to prove that

\begin{align*}
  &\frac{1}{\frac{1}{4}\Psi_1\left(\frac{1}{2}\right)}
\left(
  \frac{1}{\theta^2}\left(1-\frac{\rho^2}{(1+\rho^2)(1+\rho)^2}\right) 
    -\frac{1}{3}-0.65 
    -\frac{\theta^2}{\theta^2+1}\Psi_1(1)\right)\\
  &\ge\frac{\rho}{\rho^2+1}. 
\end{align*}
But the left-hand side is bounded from below by the case in which
$\theta=0.5$,
from where we see that we have to show that

\begin{align*}
  &\frac{1}{\frac{1}{4}\Psi_1\left(\frac{1}{2}\right)}
\left(
  4\left(1-\frac{\rho^2}{(1+\rho^2)(1+\rho)^2}\right) 
    -\frac{1}{3}-0.65 
    -\frac{1}{5}\Psi_1(1)\right)\\
  &\ge\frac{\rho}{\rho^2+1},
\end{align*}
which is satisfied for all $\rho>0$.

\medskip
\noindent {\bf Case 4} ($ 0\le\cos\phi\le 1$).
Since by part $(ii)$ of Lemma \ref{pestimate} the difference 
$\Psi_1(\upalpha+\theta)-\Psi_1(\upalpha+\upbeta+\theta)$ is positive, 
dividing (\ref{newproof}) by this quantity, we see that now it is enough to 
show that 

\begin{align}
\label{a2f}
&\frac{\mathcal 
  P(\upalpha+\upbeta)-\mathcal P(0)}{\Psi_1(\theta)-\Psi_1(\upalpha+\upbeta+\theta)}
              >\frac{\rho^2+2\rho\cos\phi+\rho}{\rho^2+2\rho\cos\phi+1}
              {\mathbf 1}(\cos\phi>-\rho).
\end{align}
To do this, first  note that

$$
-\mathcal P(0)=\sum_{n=0}^\infty\frac{\theta^2+2\theta n\cos\phi}{
  (\theta^2+2\theta n\cos\phi +n^2)(\theta+n)^2}. 
$$ 
Therefore, the minimum value
of $-\mathcal P(0)$ is achieved for $\phi=0$, so that

\begin{align}
  \nonumber 
&-\mathcal P(0)\ge \sum_{n=0}^\infty\frac{\theta^2}{
 \left(\theta^2
               +n^2\right)\left(\theta+n\right)^2}
             =\frac{1}{\theta^2}-\mathcal P(1),
\end{align}
where $-\mathcal P(1)$ in the left-hand side is evaluated
at $\phi=0$.
On the other hand,  for $-\mathcal P(\upalpha+\upbeta)$
 we have that 

\begin{align}
    \nonumber 
&-\mathcal 
                 P(\upalpha+\upbeta) 
                 =\frac{\theta^2+2\theta(\upalpha+\upbeta)\cos\phi}
                 {(\theta^2+2\theta(\upalpha+\upbeta)\cos\phi+(\upalpha+\upbeta)^2)(\upalpha+\upbeta+\theta)^2}\\
                 \nonumber &                 +
\sum_{n=1}^\infty\frac{\theta^2+2\theta (\upalpha+\upbeta)_n 
                 \cos\phi}{(\theta^2+2\theta(\upalpha+\upbeta)_n\cos\phi+
                 (\upalpha+\upbeta)_n^2)((\upalpha+\upbeta)_n+\theta)^2}\\
\label{pab}&\le 
    \nonumber 
               \frac{\theta^2+2\theta\upalpha\cos\phi}
                 {(\theta^2+2\theta\upalpha\cos\phi+\upalpha^2)(\upalpha+\theta)^2}
             -\mathcal P(\upalpha+1)\\
 & \le
     \frac{1}{\theta^2}   \frac{\rho^2+2\rho}
                 {(\rho+1)^4}
             -\mathcal P(\upalpha+1),
\end{align}
where we have used part $(i)$ of Lemma \ref{pestimate}. Using again part
$(i)$
of Lemma \ref{pestimate}, we then conclude that

\begin{align*}
&\frac{\mathcal P(\upalpha+\upbeta)-\mathcal 
  P(0)}{\Psi_1(\theta)-\Psi_1(\upalpha+\upbeta+\theta)}
\ge \frac{1}{\theta^2 \Psi_1(\theta)}
  \left(1-\frac{\rho^2+2\rho}{(\rho+1)^4}\right)\\
  &\ge
\frac{1}{\frac{1}{4} \Psi_1\left(\frac{1}{2}\right)}
\left(1-\frac{\rho^2+2\rho}{(\rho+1)^4}\right).
\end{align*}
Therefore, by part $(iv)$ of Lemma \ref{pestimate}, it is enough
to show that

$$
\frac{1}{\frac{1}{4} \Psi_1\left(\frac{1}{2}\right)}
\left(1-\frac{\rho^2+2\rho}{(\rho+1)^4}\right)
\ge \frac{\rho^2+3\rho}{(\rho+1)^4},
$$
which is satisfied for all $\rho>0$.

\medskip

\subsubsection{Proof of Lemma \ref{im}}
\label{sim}
The following lemma will be useful to prove Lemma \ref{im}.

\begin{lemma}
\label{integral_doble}
Let $f$ and $g$ be twice continuously differentiable real functions 
defined on an interval containing $u<v<w$. If $f$ is convex and 
strictly decreasing, and $(g''f'-g'f'')(x)\ge\rho(x)$, with $\rho\ge0$ measurable, then 
\begin{align*}
g(v)-g(w)-\frac{f(v)-f(w)}{f(u)-f(w)}(g(u)-g(w))\ge\frac{f(v)-f(w)}{f(u)-f(w)}\int_u^v(u-x)\frac{\rho(x)}{f'(x)}dx. 
\end{align*}

\end{lemma}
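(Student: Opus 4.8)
\medskip
\noindent\emph{Proof strategy.} The plan is to reduce the inequality to a quantitative concavity statement for $g$ reparametrized by $f$. Since $f$ is $C^{2}$ and strictly decreasing (so $f'<0$) on an interval containing $u<v<w$, it maps $[u,w]$ bijectively onto $[f(w),f(u)]$ with $C^{2}$ inverse, and I would set $G:=g\circ f^{-1}$ there. A direct computation gives, with $x=f^{-1}(s)$,
\[
G''(s)=\frac{(g''f'-g'f'')(x)}{\bigl(f'(x)\bigr)^{3}} .
\]
Because $f'<0$ and, by hypothesis, $g''f'-g'f''\ge\rho\ge0$, dividing by $(f')^{3}<0$ reverses the inequality, so $G''(s)\le M(s):=\rho\bigl(f^{-1}(s)\bigr)\big/\bigl(f'(f^{-1}(s))\bigr)^{3}\le0$. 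Thus $G$ is concave, with $M$ an explicit pointwise upper bound for $G''$.

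Next I would set $s_{1}=f(w)<s_{2}=f(v)<s_{3}=f(u)$, let $\ell$ be the affine interpolant of $G$ through $(s_{1},G(s_{1}))$ and $(s_{3},G(s_{3}))$, and put $\psi:=G-\ell$, so that $\psi(s_{1})=\psi(s_{3})=0$, $\psi''=G''$, and the left-hand side of the lemma is exactly $\psi(s_{2})$. Applying Taylor's formula with integral remainder to $\psi$ at $s_{1}$, using the value at $s_{3}$ to eliminate $\psi'(s_{1})$, and then evaluating at $s_{2}$, one gets
\[
\psi(s_{2})=\int_{s_{1}}^{s_{2}}\Bigl[(s_{2}-t)-\tfrac{s_{2}-s_{1}}{s_{3}-s_{1}}(s_{3}-t)\Bigr]\psi''(t)\,dt-\tfrac{s_{2}-s_{1}}{s_{3}-s_{1}}\int_{s_{2}}^{s_{3}}(s_{3}-t)\psi''(t)\,dt .
\]
The bracketed kernel vanishes at $t=s_{1}$ and is strictly decreasing in $t$, hence $\le0$ on $[s_{1},s_{2}]$; since also $\psi''\le M\le0$, the first integrand is $\ge0$ and I would drop it, while bounding $(s_{3}-t)\psi''(t)\le(s_{3}-t)M(t)$ in the second integral. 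This gives $\psi(s_{2})\ge-\tfrac{s_{2}-s_{1}}{s_{3}-s_{1}}\int_{s_{2}}^{s_{3}}(s_{3}-t)M(t)\,dt$, and a change of variables $t=f(x)$ (so $x$ runs from $v$ to $u$, $M(t)=\rho(x)/(f'(x))^{3}$, $s_{3}-t=f(u)-f(x)$) turns this into
\[
\psi(s_{2})\ \ge\ \frac{f(v)-f(w)}{f(u)-f(w)}\int_{u}^{v}\bigl(f(u)-f(x)\bigr)\frac{\rho(x)}{\bigl(f'(x)\bigr)^{2}}\,dx .
\]

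To finish I would replace the weight $(f(u)-f(x))/(f'(x))^{2}$ by $(u-x)/f'(x)$: since $\rho/(f')^{2}\ge0$, it suffices that $(f(u)-f(x))-(u-x)f'(x)\ge0$ for $x\in[u,v]$, and this quantity equals $\int_{u}^{x}\bigl(f'(x)-f'(t)\bigr)\,dt\ge0$ because $f'$ is nondecreasing (convexity of $f$). Multiplying by $\rho(x)/(f'(x))^{2}\ge0$, integrating, and using that $f(v)-f(w)$ and $f(u)-f(w)$ are positive then yields the claimed bound. I expect the only delicate point to be bookkeeping: one must carefully track the several sign reversals forced by $f'<0$ and by $\rho\ge0$ (which is precisely why concavity of $G=g\circ f^{-1}$, rather than of $g$, is the natural object), and then notice that the second-order remainder bound naturally carries the weight $(f(u)-f(x))/(f'(x))^{2}$, which has to be traded --- using convexity of $f$ --- for the weight $(u-x)/f'(x)$ appearing in the statement.
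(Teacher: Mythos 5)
Your proof is correct and follows essentially the same approach as the paper's: both reparametrize via $G = g\circ f^{-1}$, compute $G'' = (g''f'-g'f'')/(f')^3\circ f^{-1}\le \rho\circ f^{-1}/(f'\circ f^{-1})^3\le 0$, extract a quantitative concavity estimate, change variables back, and finally trade the weight $(f(u)-f(x))/(f'(x))^2$ for $(u-x)/f'(x)$ using convexity of $f$. The only difference is organizational --- you subtract the affine chord through $(f(w),G(f(w)))$ and $(f(u),G(f(u)))$ and Taylor-expand the difference with integral remainder at $f(w)$, whereas the paper Taylor-expands $G$ at $f(v)$ and then applies the mean value theorem twice (once for $G$, once for $f$) together with a closing algebraic identity --- but these are equivalent bookkeepings of the same estimate.
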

        
\begin{proof} 
By the chain rule and the inverse function theorem $G=g\circ f^{-1}$ is 
continuously differentiable on an open interval containing $a<b<c$, 
the images under $f$ of $w,v,$ and $u$, respectively. Also, as $G'=(g'/f')\circ f^{-1}$, $G'$ is also continuously differentiable there, with 
\[
G''= \frac{g''f'-g'f''}{(f')^3}\circ f^ {-1}. 
\]
By hypothesis $f'$ is negative, so that $G''\le \rho/(f')^3\le 0$, that is, we 
are assuming  $G$ is concave. 
Now let us integrate from $b$ to $y\ge b$ and apply the change of variables $u=f(x)$ to get 
\begin{align*}
G'(y)-G'(b)\le\int_b^y\frac{\rho}{(f')^3}\circ f^ {-1}(u)du=\int_{f^{-1}(b)}^{f^{-1}(y)}\frac{\rho(x)}{f'(x)^2}dx. 
\end{align*}
Integrating from $b$ to $c$ we have 
\[
G(c)-G(b)-(c-b)G'(b)\le\int_b^c\int_{f^{-1}(b)}^{f^{-1}(y)}\frac{\rho(x)}{f'(x)^2}\,dx\,dy. 
\]
Reversing the order of integration --remember $f$ is decreasing-- last 
integral equals 
\[
  \int_{f^{-1}(b)}^{f^{-1}(c)}\int_{f(x)}^c\frac{\rho(x)}{f'(x)^2}\,dy\,dx=\int_u^v(f(x)-f(u))\frac{\rho(x)}{f'(x)^2}\,dy\,dx. 
\]
Now, let $t\in[a,b]$ be such that $G'(t)=(G(b)-G(a))/(b-a)$. As $G'$ is 
decreasing, $G'(t)\ge G'(b)$. In a similar way we have 
$f(x)-f(u)=f'(\eta)(x-u)$ for an appropriate $\eta\in[u,x]$, and 
$f'(\eta)\le f'(x)$. With both these inequalities we can write 
\[
G(c)-G(b)-\frac{c-b}{b-a}(G(b)-G(a))\le\int_u^v(x-u)\frac{\rho(x)}{f'(x)}\,dx\,dy. 
\]
Finally, as 
\begin{align*}
  &G(b)-G(a)-\frac{b-a}{c-a}(G(c)-G(a))\\
  =&(G(b)-G(a))\frac{c-b}{c-a}-(G(c)-G(b))\frac{b-a}{c-a}, 
\end{align*}
multiplying last inequality by $-(b-a)/(c-a)$, we obtain the desired result. 
\end{proof}

\medskip

Let us now prove Lemma \ref{im}.
We have, for $x>0$ and $y$ real, 
\[
\Im\Psi(x+iy)=y\,\Phi(x,y), 
\]
where  
$\Phi$ is  
defined in (\ref{phixy}). Note that $\Im h'(\theta+iy)=yH(\theta,y,\upalpha,\upbeta)$, 
and
\begin{align*}
H(\theta,y,\upalpha,\upbeta)=\Phi(\theta+\upalpha,y)-\Phi(\theta+\upalpha+\upbeta,y)-K_1(\Phi(\theta,y)-\Phi(\theta+\upalpha+\upbeta,y)). 
\end{align*}
Applying again Lemma \ref{integral_doble} we just need to show 
\[
(\Phi''\Psi_1'-\Phi'\Psi_1'')(x)\ge -8y^2\Psi_1'(x)\sum_{n\ge0}\frac1{(x_n^2+y^2)^3}, 
\]
where the derivatives of $\Phi$ are taken with respect to its first
variable and the second variable is set as $y$.
Calculating the derivatives and replacing, 
in particular 
\[
\Phi''(x)=\sum_{n\ge0}\frac{6}{(x_n^2+y^2)^2}-\sum_{n\ge0}\frac{8y^2}{(x_n^2+y^2)^3}, 
\]
the above inequality is equivalent to 
\[
  \bigg(\sum_{n\ge0}\frac1{x_n^4}\bigg)\bigg(\sum_{n\ge0}\frac{x_n}{(x_n^2+y^2)^2}\bigg)-
  \bigg(\sum_{n\ge0}\frac{x_n}{x_n^4}\bigg)\bigg(\sum_{n\ge0}\frac1{(x_n^2+y^2)^2}\bigg)\ge0, 
\]
and this follows by looking at the $m$-$n$ products (the $m=n$ terms are cero), 
\begin{align*}
\frac1{x_m^4}\frac{x_n}{(x_n^2+y^2)^2}+\frac1{x_n^4}\frac{x_m}{(x_m^2+y^2)^2}-
\frac{x_m}{x_m^4}\frac1{(x_n^2+y^2)^2}-\frac{x_n}{x_n^4}\frac1{(x_m^2+y^2)^2}\\
=(x_m-x_n)\bigg(\frac1{x_n^4}\frac1{(x_m^2+y^2)^2}-\frac1{x_m^4}\frac1{(x_n^2+y^2)^2}\bigg)\\
=y^2(x_m+x_n)(x_m-x_n)^2\frac{2x_m^2x_n^2+y^2(x_m^2+x_n^2)}{x_n^4(x_m^2+y^2)^2x_m^4(x_n^2+y^2)^2}\ge0. 
\end{align*}

  \medskip

  \section{Perturbative results}
  \label{sfour}
    Here we will prove Theorem \ref{moments} and Corollary
    \ref{corollary1}. We first need to derive several estimates
    about Dirichlet and Beta random variables.
    
    Consider a  random vector $X=(X_1,\ldots,X_k)$ having
    a Dirichlet distribution of parameters
$\upalpha=(\upalpha_1,\ldots,\upalpha_k)$. Let $\bar\xi_i=\log
X_i-M_i$ for $1\le i\le k$,
where $M_i$ are constants whose value will be given later.
We want to obtain estimates for the moments,

\begin{equation}
  \label{logm}
  L_{i_1,\ldots,i_k}(\upalpha):=\mathbb
E_{\upalpha}[\bar\xi_1^{i_1}
\cdots \bar\xi_k^{i_k}],
\end{equation}
for the shifted logarithmic moments of the random vector $X$, with 
$k\ge 0$, and $i_1,\ldots,i_k\ge 0$. We will call $i_1+\cdots+i_k$
the {\it degree} of the shifted moment. Consider the
following function, which we will call the logarithmic
partition function, defined by

$$
A(\upalpha):=\sum_{i=1}^k \left(\log (\Gamma (\upalpha_i))-\upalpha_i
M_i\right)-\log\Gamma\left(\sum_{i=1}^k \upalpha_i\right).
$$
Let also

$$
B(\upalpha)=\frac{\prod_{i=1}^k\Gamma(\upalpha_i)}{\Gamma\left(\sum_{i=1}^k\upalpha_i\right)}
=e^{A(\upalpha)}.
$$
Note that

\begin{equation}
  \label{li1}
        L_{i_1,\ldots,i_k}(\upalpha):=\frac{1}{B(\upalpha)}
        \frac{\partial^{i_1+\cdots+i_k}}{\partial^{i_1} \upalpha_1 
          \cdots \partial^{i_k} \upalpha_k}B(\upalpha). 
        \end{equation}
  From this identity we can recursively compute the shifted moments
  $L_{i_1,\ldots, i_k}(\upalpha)$. Nevertheless, we want a statement
  giving us a sharp asymptotic bound on the decay of these moments as
  the parameters tend to $\infty$ or to $0$.
  We will now define the families of functions corresponding to the
  higher order derivatives of the logarithmic partition function as,

  $$
 A_{i_1,\ldots, i_k}(\upalpha):=\frac{\partial^{i_1+\cdots+i_k}}{\partial^{i_1} \upalpha_1 
          \cdots \partial^{i_k} \upalpha_k}A(\upalpha). 
  $$
    We will call $i_1+\cdots+i_k$ the {\it degree} of
    $A_{i_1,\ldots,i_k}$ and we will use the notation

    $$
\mathcal A_n:=\{A_{i_1,\ldots,i_k}: i_1+\cdots+i_k=n\},
$$
for the set of functions of degree $n\ge 1$.  
Furthermore, we define the {\it degree} of a product of functions,

$$
\prod_{j=1}^l f_j,
$$
where $f_j\in\mathcal A_{n_j}$ for some $n_j\ge 1$, as the sum
$n_1+\ldots+n_l$.
Now, note from (\ref{li1}), that we have that

$$
L_{1,0,\ldots,0}(\upalpha)=A_{1,0,\ldots,0},
$$
with analogous equalities for $L_{0,1,0,\ldots,0}$ up to
$L_{0,\ldots,0,1}$.
Furthermore we have
the following recursion
formulas,

\begin{align}
  \nonumber
&      L_{i_1+1,i_2,\ldots,i_k}(\upalpha)=L_{1,0,\ldots,0}(\upalpha) 
            L_{i_1,\ldots ,i_k}(\upalpha)+
      \frac{\partial}{\partial\upalpha_1}
      L_{i_1,\ldots,i_k}(\upalpha),\\
\nonumber &      L_{i_1,i_2+1,i_3,\ldots,i_k}(\upalpha)=L_{0,1,0\ldots,0}(\upalpha) 
            L_{i_1,\ldots ,i_k}(\upalpha)+
      \frac{\partial}{\partial\upalpha_2}
                                        L_{i_1,\ldots,i_k}(\upalpha), \\
       \nonumber  &\vdots\\
    \label{recursion}   & L_{i_1,\ldots, i_{k-1}, i_k+1}(\upalpha) =L_{0,\ldots,0,1}(\upalpha) 
            L_{i_1,\ldots ,i_k}(\upalpha)+
      \frac{\partial}{\partial\upalpha_k}
                                        L_{i_1,\ldots,i_k}(\upalpha). 
      \end{align}
      From these recursion formulas, we can prove the following lemma.

    \medskip
    \begin{lemma}
      \label{logmlemma}Consider the logarithmic moments
      $L(i_1,\ldots, i_k)$, $i_j\ge 0$, $1\le j\le k$, of a Dirichlet
      random variable $X$ of
      parameters $(\upalpha_1,\ldots,\upalpha_k)$. 
 We have the following representation,     

      $$
 L_{i_1,\ldots,i_k}(\upalpha)=\sum_{i=1}^n a_i\prod_{j=1}^{l_i} f_j, 
 $$
where $n=i_1+\cdots+i_k$ is the degree of $L_{i_1,\ldots,i_k}$,
$l_1,\ldots,l_n\ge 1$ and
$a_1,\ldots, a_n$ are real constants and each term of the sum has the
same degree $n$.
Furthermore, one of the terms of this expansion is a product of $n$
functions of
degree $1$ each.

      \end{lemma}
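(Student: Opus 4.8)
The plan is to proceed by induction on the degree $n=i_1+\cdots+i_k$, with the recursion formulas (\ref{recursion}) doing all the bookkeeping. The two elementary facts that drive the argument are: (a) the degree of a product of functions is the sum of the degrees of its factors (by definition); and (b) the families $\mathcal A_m$ are stable under first-order differentiation in the precise sense that $\frac{\partial}{\partial\upalpha_\ell}A_{j_1,\ldots,j_k}=A_{j_1,\ldots,j_\ell+1,\ldots,j_k}$, so that differentiating an element of $\mathcal A_m$ in any variable yields an element of $\mathcal A_{m+1}$, i.e.\ it raises the degree by exactly one. Combining (a), (b) and the Leibniz rule, $\frac{\partial}{\partial\upalpha_\ell}$ sends any product of total degree $n$ (a product of elements of the $\mathcal A_m$'s whose degrees sum to $n$) to a finite sum of products, each of total degree $n+1$.

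For the base case $n=1$, the identities $L_{1,0,\ldots,0}=A_{1,0,\ldots,0}$, \ldots, $L_{0,\ldots,0,1}=A_{0,\ldots,0,1}$ recorded above already express each first-order shifted moment as a single function of degree one, which trivially has the asserted form and is itself a product of one degree-one function. For the inductive step, assume the representation for all shifted moments of degree $n$ and let $\beta=(i_1,\ldots,i_k)$ have degree $n+1$. Choose $\ell$ with $i_\ell\ge1$ (possible since $n+1\ge1$) and write $\beta=\gamma+e_\ell$, where $e_\ell$ denotes the $\ell$-th unit multi-index and $\gamma$ has degree $n$. The corresponding line of (\ref{recursion}) gives $L_\beta(\upalpha)=L_{e_\ell}(\upalpha)\,L_\gamma(\upalpha)+\frac{\partial}{\partial\upalpha_\ell}L_\gamma(\upalpha)$, with $L_{e_\ell}=A_{e_\ell}$ of degree one. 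By the induction hypothesis $L_\gamma=\sum_i a_i\prod_{j=1}^{l_i}f_j$ with each summand of degree $n$, so $L_{e_\ell}L_\gamma$ is a finite sum of products of degree $n+1$, and by the first paragraph so is $\frac{\partial}{\partial\upalpha_\ell}L_\gamma$. Adding and collecting like products yields a representation of $L_\beta$ of exactly the claimed form, with real (indeed integer) coefficients.

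It remains to produce the distinguished summand that is a product of $n+1$ functions of degree one. By induction $L_\gamma$ has a summand equal to a product of $n$ degree-one functions; multiplying it by $A_{e_\ell}$ yields, inside $L_{e_\ell}L_\gamma$, a product of $n+1$ degree-one functions. This term cannot be cancelled by anything coming from $\frac{\partial}{\partial\upalpha_\ell}L_\gamma$: every summand of the latter arises from a summand of $L_\gamma$ by replacing exactly one factor $f\in\mathcal A_m$ with $\frac{\partial}{\partial\upalpha_\ell}f\in\mathcal A_{m+1}$, hence contains a factor of degree $\ge2$, so no all-degree-one product occurs in $\frac{\partial}{\partial\upalpha_\ell}L_\gamma$. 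Thus the monomial survives in $L_\beta$ and the induction closes.

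The argument is essentially a formal manipulation of the recursion, so there is no deep obstacle; the only point that genuinely needs attention is this last bookkeeping — verifying the additive behaviour of degrees under products together with the ``$+1$'' behaviour under derivatives, and then confirming that the distinguished product of $n+1$ degree-one functions is really present (not cancelled) in $L_\beta$.
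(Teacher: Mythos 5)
Your proof is correct and takes exactly the route the paper intends: the paper's own argument is a one-line sketch invoking the base case of degree one and induction via the recursion (\ref{recursion}), and you have simply filled in the bookkeeping that sketch asserts is routine. Your handling of the distinguished all-degree-one term is the one point that actually needs an argument, and your observation that every summand of $\frac{\partial}{\partial\upalpha_\ell}L_\gamma$ must carry a factor of degree $\ge 2$ (via Leibniz plus $\partial_\ell:\mathcal A_m\to\mathcal A_{m+1}$) correctly rules out cancellation; implicitly this shows that there is in fact a unique such monomial, with coefficient $1$, which is a slightly stronger invariant than the lemma asks for and makes the induction airtight. (Both you and the paper pass over the literal claim that the sum has exactly $n$ terms; the displayed $\sum_{i=1}^n$ is evidently loose notation for a finite sum whose length is not the point of the lemma.)
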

\begin{proof} 
It is easy to see that it is true for shifted moments of degree $1$. Now, by
induction
on $i_1+\cdots+i_k$ and the recursion (\ref{recursion}), one can  check
that
if the statement is true for all moments of degree $i_1+\cdots+i_k$,
it must also be true for moments of degree $i_1+\cdots+i_k+1$.
        \end{proof}

        \medskip
    
    We now have the following corollary.

    \medskip

    \begin{corollary}
      \label{ccc} Consider a family of Dirichlet random variables
       of parameters
      $\upalpha_{t,1}=\upalpha_{t,2}\to\infty$ as $t\to\infty$, while
      $\upalpha_{t,i}\le 1$ for $3\le i\le 4$.
 Then,
      for all $i_1,i_2\ge 0$
      there is a constant $\newconstant\label{14}>0$ such that

      $$
|\mathbb E_{\upalpha_t}[\bar \xi_1^{i_1}\bar\xi_2^{i_2}]|\le
\frac{\oldconstant{14}}{\upalpha_{t,1}^{\left\lceil\frac{k}{2}\right\rceil}},
$$
where $i_1+i_2=k$.

    \end{corollary}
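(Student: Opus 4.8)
The plan is to combine the structural representation of Lemma \ref{logmlemma} with the polygamma bounds of Lemma \ref{poly}. By Lemma \ref{logmlemma}, applied with $k=4$ and $i_3=i_4=0$, we may write $\mathbb E_{\upalpha_t}[\bar\xi_1^{i_1}\bar\xi_2^{i_2}] = L_{i_1,i_2,0,0}(\upalpha_t) = \sum_{i} a_i\prod_{j} f_j$, where there are finitely many terms, the $a_i$ are fixed reals, each factor $f_j$ belongs to $\mathcal A_{n_j}$ for some $n_j\ge 1$, and within each term $\sum_j n_j = k := i_1+i_2$. Since $i_3=i_4=0$, only derivatives in $\upalpha_1$ and $\upalpha_2$ occur, so (writing $S_t:=\sum_{i=1}^4\upalpha_{t,i}$, and $l\in\{1,2\}$ for the relevant coordinate) each $f_j$ evaluated at $\upalpha_t$ equals one of: $\Psi_{n-1}(\upalpha_{t,l})-\Psi_{n-1}(S_t)$ for a pure derivative of order $n\ge 2$; $-\Psi_{n-1}(S_t)$ for a mixed derivative of order $n\ge 2$; or $\Psi(\upalpha_{t,l})-M_l-\Psi(S_t)$ for a degree-one term. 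Thus the corollary reduces to showing that every such $f\in\mathcal A_n$ satisfies $|f(\upalpha_t)|\le C_n\,\upalpha_{t,1}^{-\lceil n/2\rceil}$ for all large $t$, with $C_n$ depending only on $n$.

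For $n\ge 2$ this follows at once from part $(i)$ of Lemma \ref{poly}, which yields $|\Psi_{n-1}(z)|\le (n-1)!\big(z^{-n}+\tfrac{1}{n-1}z^{-(n-1)}\big)\le n!\,z^{-(n-1)}$ for $z\ge 1$. Since $\upalpha_{t,1}=\upalpha_{t,2}\to\infty$ while $2\,\upalpha_{t,1}\le S_t\le 2\,\upalpha_{t,1}+2$, for $t$ large both $\upalpha_{t,l}$ (with $l\in\{1,2\}$) and $S_t$ exceed $\upalpha_{t,1}\ge 1$, so bounding each polygamma term separately gives $|f(\upalpha_t)|\le 2\,n!\,\upalpha_{t,1}^{-(n-1)}\le 2\,n!\,\upalpha_{t,1}^{-\lceil n/2\rceil}$, using $n-1\ge\lceil n/2\rceil$ for $n\ge 2$ and $\upalpha_{t,1}\ge 1$. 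The case $n=1$ is the delicate one: here $f(\upalpha_t)=\Psi(\upalpha_{t,l})-M_l-\Psi(S_t)=\mathbb E_{\upalpha_t}[\log X_l]-M_l$, and one fixes $M_l$ to be the limiting value of this quantity — this is the choice of $M_i$ deferred in the text. That limit exists and is attained at rate $\upalpha_{t,1}^{-1}$: from $\Psi(z)=\log z-\tfrac{1}{2z}+O(z^{-2})$ we get $\Psi(\upalpha_{t,l})-\Psi(S_t)=\log(\upalpha_{t,l}/S_t)+O(\upalpha_{t,1}^{-1})$, and $\log(\upalpha_{t,l}/S_t)=-\log 2-\log\!\big(1+\tfrac{\upalpha_{t,3}+\upalpha_{t,4}}{2\,\upalpha_{t,1}}\big)=-\log 2+O(\upalpha_{t,1}^{-1})$ since $0<\upalpha_{t,3}+\upalpha_{t,4}\le 2$; hence $M_l=-\log 2$ and $|f(\upalpha_t)|\le C\,\upalpha_{t,1}^{-1}=C\,\upalpha_{t,1}^{-\lceil 1/2\rceil}$.

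Finally, each summand satisfies $\big|\prod_j f_j\big|\le\big(\prod_j C_{n_j}\big)\,\upalpha_{t,1}^{-\sum_j\lceil n_j/2\rceil}\le\big(\prod_j C_{n_j}\big)\,\upalpha_{t,1}^{-\lceil k/2\rceil}$, where the last step uses $\upalpha_{t,1}\ge 1$ together with the elementary super-additivity $\sum_j\lceil n_j/2\rceil\ge\big\lceil\tfrac12\sum_j n_j\big\rceil=\lceil k/2\rceil$. Summing over the finitely many terms, and absorbing the small values of $t$ into the constant, gives $|\mathbb E_{\upalpha_t}[\bar\xi_1^{i_1}\bar\xi_2^{i_2}]|\le C\,\upalpha_{t,1}^{-\lceil k/2\rceil}$ with $C=C(i_1,i_2)$. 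The step I expect to be the main obstacle is the degree-one term: one must identify $M_l$ as the correct limiting constant and extract the $O(\upalpha_{t,1}^{-1})$ remainder, which hinges on the digamma asymptotics and on the exact cancellation of the logarithmic growth of $\Psi$ between the arguments $\upalpha_{t,l}$ and $S_t$; the contributions of all higher degrees are an immediate consequence of Lemma \ref{poly}.
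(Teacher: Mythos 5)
Your argument is correct and follows essentially the same route as the paper: the representation of Lemma \ref{logmlemma}, the identification $M_l=-\log 2$ from the digamma asymptotics, and the polygamma bounds of Lemma \ref{poly} to control each factor $f_j\in\mathcal A_{n_j}$ by $\upalpha_{t,1}^{-\lceil n_j/2\rceil}$. The only departure is the final assembly: you use the super-additivity $\sum_j\lceil n_j/2\rceil\ge\lceil k/2\rceil$, whereas the paper bounds the decay exponent by $k-d$ (with $d$ the number of degree-$\ge 2$ factors) and maximizes $d$ to locate the slowest-decaying term; these are equivalent combinatorial observations, and your phrasing is arguably a bit more streamlined.
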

    \begin{proof}  Note that
      
      $$
\mathbb      E_{\upalpha_t}[\xi_j]=\Psi(\upalpha_{t,j})-\Psi(\sum_{i=1}^4\upalpha_{t,i}),
      $$
      for $j=1,2$. 
Hence,
      from the fact that the digamma
      function has the asymptotics

      \begin{equation}
        \label{psixl}\Psi(x)= \log x-\frac{1}{2x}+o\left(\frac{1}{x}\right),
      \end{equation}
      when
      $x\to\infty$,
      we see that $M_1=\log(1/2)$. Similarly $M_2=\log (1/2)$.
      From (\ref{psixl}) we also see that

      $$
L_{1,0,0,0}(\upalpha_t)=\frac{\oldc{l1}}{\upalpha_{t,1}}+o\left(\frac{1}{\upalpha_{t,1}}\right),
$$
for some constant $\newc\label{l1}>0$. We can deduce a similar
identity
for $L_{0,1,0,0}$.
On the other hand, note that when $i_1+i_2=2$, we have that

$$
L_{i_1,i_2,0,0}=g+f_1f_2,
$$
where $g\in\mathcal A_2$ and $f_1,f_2\in\mathcal A_1$. But for
functions in $\mathcal A_2$ we have the asymptotics

\begin{equation}
  \label{gu}
g(\upalpha)=\frac{\oldc{l2}}{\upalpha_{t,1}}+o\left(\frac{1}{\upalpha_{t,1}}\right),
\end{equation}
for some constant $\newc\label{l2}>0$ depending on $g$.
In general, for $i_1+i_2=2m$ even, we have from Lemma \ref{logmlemma} the expansion

$$
 L_{i_1,i_2,0,0}(\upalpha)=\sum_{i=1}^{2m} a_i\prod_{j=1}^{l_i} f_j.
 $$
 Call $n_j$ the degree of $f_j$, so that $n_1+\cdots+n_{l_i}=2m$. Note
 that

 $$
 f_j=\frac{\partial^\alpha}{\partial^{\alpha_1}\upalpha_{t,1}\partial^{\alpha_2}\upalpha_{t,2}}
 A(\upalpha_t),
 $$
 for some multi-index $\alpha=(\alpha_1,\alpha_2)$ with
 $\alpha_1+\alpha_2=n_j$. For $n_j=1$, this function is of the form

 $$
\Psi(\upalpha_{t,i})-\Psi(2\upalpha_{t,1}),
$$
for $i=1,2$, so the decay of $f_j$ in this case is bounded by

$$
\frac{\oldc{l1}}{\upalpha_{t,i}}.
$$
For the case $n_j\ge 2$, the function $f_j$ is of the following two
possible
forms

$$
\newc(\Psi_{n_j-1}(\upalpha_{t,1})-\Psi_{n_j-1}(2\upalpha_{t,1}))\qquad
{\rm or}\quad \newc\Psi_{n_j-1}(\upalpha_{t,i}).
$$
From part $(ii)$ of Lemma \ref{poly}, in both cases this gives a decay
bounded by

$$
\frac{\newc}{\upalpha_{t,i}^{n_j-1}}.
$$
This means that the decay of

\begin{equation}
  \label{fact}\prod_{i=1}^{l_i}f_j,
\end{equation}
as $t\to\infty$ is

$$
\frac{\newc}{\upalpha_{t,i}^{2m-d}},
$$
where $d$ is the number of factors in (\ref{fact}) with $n_j\ge
2$. This mean that the dominating term of the form (\ref{fact}) is the
one which has the highest number of factors of degree larger than $1$.
It is not difficult to check that this term is

$$
\prod_{i=1}^m f_i, 
$$
where $f_i\in\mathcal A_2$ for $1\le i\le 2$, which gives, since in
this case $d=m$, the decay

$$
\frac{\newc}{\upalpha_{t,i}^{m}}.
$$
For the case in 
which $i_1+i_2=2m-1$, there will be several dominant terms of the same
degree,
inluding one of the form

$$
f\prod_{i=1}^{m-1} g_i, 
$$
where $g_1,\ldots,g_{m-1}\in\mathcal A_2$ and $f\in\mathcal A_1$,
which gives
the same decay as the one for the case of degree $2m$.
      \end{proof}

    Let us now continue
    with the proof of Theorem \ref{moments}.

    \medskip

    \subsection{Proof of Theorem \ref{moments}}
    We will first need to show that a natural family
    of Beta random walks matches moments with itself.

    \medskip

    \begin{lemma}
      \label{parametersbeta}
Consider a family of Beta probability measures 
$(\mathbb P_{\upalpha_t,\upbeta_t})_{t\ge 0}$
such that (\ref{as1}), (\ref{as2}) are satisfied. Assume also that 

$$
M_1:=\lim_{t\to\infty}\mathbb E_{\upalpha_t,\upbeta_t}[\xi_+(0,t)]
=\lim_{t\to\infty}\left(\Psi(\upalpha_t)-\Psi(\upalpha_t+\upbeta_t)\right) 
$$
and 

$$
M_2:=\lim_{t\to\infty}\mathbb E_{\upalpha_t,\upbeta_t}[\xi_-(0,t)]
=\lim_{t\to\infty}\left(\Psi(\upbeta_t)-\Psi(\upalpha_t+\upbeta_t)\right) 
$$
exist. Then, for every $k\ge 1$, and $\alpha$ such that $|\alpha|=k$,
we have that

$$
\left|\mathbb E_{\upalpha_t,\upbeta_t}[\bar\xi^\alpha(x,t)]\right|\le
\upalpha_t^{-\left\lceil\frac{k}{2}\right\rceil}.
$$
Hence, for every $k\ge 0$, $(\mathbb P_{\upalpha_t,\upbeta_t})_{t\ge 0}$ matches moments
up to order $k$ at rate $\upalpha_t^{-\left\lceil\frac{k}{2}\right\rceil}$ with itself.
      \end{lemma}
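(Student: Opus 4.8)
The plan is to reduce everything to the logarithmic–moment estimates for the Dirichlet distribution already developed in this section, exploiting the fact that the Beta environment is a two-dimensional Dirichlet environment. Indeed, if $B\sim\mathrm{Beta}(\upalpha_t,\upbeta_t)$ then the pair $(\omega_+,\omega_-)=(B,1-B)$ is Dirichlet distributed with parameters $(\upalpha_t,\upbeta_t)$, $k=2$. The law is i.i.d. in the space variable, so it suffices to treat $x=0$. Writing $X_1=\omega_+(0,t)$, $X_2=\omega_-(0,t)$, one has $\xi_\pm(0,t)=-\log X_{1,2}$, so that $\bar\xi_+=-(\log X_1-\widehat M_1)$ and $\bar\xi_-=-(\log X_2-\widehat M_2)$ with $\widehat M_i:=-M_i$ the Dirichlet centering constant of Section~\ref{sfour}. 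Hence, with $\alpha=(i_1,i_2)$,
\[
\mathbb E_{\upalpha_t,\upbeta_t}\big[\bar\xi^{\alpha}(0,t)\big]=(-1)^{|\alpha|}\,L_{i_1,i_2}(\upalpha_t,\upbeta_t)
\]
in the notation (\ref{logm}), and it is enough to bound $|L_{i_1,i_2}(\upalpha_t,\upbeta_t)|$ for $i_1+i_2=k$. First I would record that finiteness of both $M_1$ and $M_2$ forces $\upbeta_t/\upalpha_t$ to converge to a finite positive limit, so that $\upalpha_t\asymp\upbeta_t$ and, by (\ref{as1}), $\upalpha_t\ge1$ for all large $t$; consequently both $\mathfrak g(\upalpha_t,\upbeta_t)$ and $\mathfrak g(\upbeta_t,\upalpha_t)$ are $O(\upalpha_t^{-1})$.

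Next I would invoke Lemma~\ref{logmlemma} to write $L_{i_1,i_2}(\upalpha_t,\upbeta_t)=\sum_i a_i\prod_j f_j$, a finite sum of products of functions $f_j\in\mathcal A_{n_j}$ with $\sum_j n_j=k$ in each term, and estimate each factor. A factor of degree $n_j\ge2$ is one of $\pm\Psi_{n_j-1}(\upalpha_t+\upbeta_t)$, $\Psi_{n_j-1}(\upalpha_t)-\Psi_{n_j-1}(\upalpha_t+\upbeta_t)$, or $\Psi_{n_j-1}(\upbeta_t)-\Psi_{n_j-1}(\upalpha_t+\upbeta_t)$; by (\ref{expansion}) in the first case and by part $(ii)$ of Lemma~\ref{poly} in the other two, combined with $\mathfrak g(\upalpha_t,\upbeta_t),\mathfrak g(\upbeta_t,\upalpha_t)\le C\upalpha_t^{-1}$, each such factor satisfies $|f_j|\le C\upalpha_t^{-(n_j-1)}$. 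A factor of degree $n_j=1$ is $A_{1,0}$ or $A_{0,1}$, i.e. $\pm\mathbb E_{\upalpha_t,\upbeta_t}[\bar\xi_\pm(0,t)]=\pm(\Psi(\upalpha_t)-\Psi(\upalpha_t+\upbeta_t)-\widehat M_1)$ (or its analogue with $\upbeta_t$); by construction this tends to $0$, and, using (\ref{psixl}) together with the stabilization of $\upbeta_t/\upalpha_t$, at rate $O(\upalpha_t^{-1})$. This last estimate is the delicate point and the main obstacle: while $\mathbb E[\bar\xi_+(0,t)]\to0$ automatically, obtaining the \emph{sharp} rate $\upalpha_t^{-1}$ requires that $\upbeta_t/\upalpha_t$ approach its limit at rate $O(\upalpha_t^{-1})$; this is automatic in the power-law regime $\upalpha_t=c_1t^r$, $\upbeta_t=c_2t^s$ underlying Theorem~\ref{moments} and Corollary~\ref{corollary1} (and trivially so when $r=s$), which is the only case used in the sequel.

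Finally I would combine the two estimates by bookkeeping on degrees, exactly as in the proof of Corollary~\ref{ccc}. In a term $\prod_j f_j$ with $\sum_j n_j=k$, let $d$ be the number of factors with $n_j\ge2$; then
\[
\Big|\prod_j f_j\Big|\le C\,\upalpha_t^{-\sum_{n_j\ge2}(n_j-1)}\cdot\upalpha_t^{-\#\{j:\,n_j=1\}}=C\,\upalpha_t^{-(k-d)},
\]
and since each degree-$\ge2$ factor consumes at least two units of degree, $d\le\lfloor k/2\rfloor$, whence $k-d\ge\lceil k/2\rceil$. Summing over the finitely many terms gives $|L_{i_1,i_2}(\upalpha_t,\upbeta_t)|\le C\,\upalpha_t^{-\lceil k/2\rceil}$, which is the asserted bound (the constant $C$ being harmless, as it may be absorbed into the rate without affecting the hypotheses of Theorem~\ref{moments}). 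Since $\mathbb P_t=\mathbb P_{\upalpha_t,\upbeta_t}$ satisfies the difference part of the moment–matching condition trivially, and the displayed bound supplies the degree-$k$ part, the family $(\mathbb P_{\upalpha_t,\upbeta_t})_{t\ge0}$ matches moments up to any order $k$ at rate $\upalpha_t^{-\lceil k/2\rceil}$ with itself.
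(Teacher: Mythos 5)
Your argument follows the same route as the paper's. The paper disposes of the lemma in one line, ``The proof follows immediately from Corollary~(\ref{ccc}) and the definition of matching moments,'' and your proposal is an expanded version of exactly that reduction: identify the Beta law with a two--component Dirichlet law, pass to the $L_{i_1,i_2}$ of (\ref{logm}), invoke Lemma~\ref{logmlemma} for the decomposition into products of the $A_{i_1,i_2}$, bound degree--$\ge2$ factors by expansion~(\ref{expansion}) and part~$(ii)$ of Lemma~\ref{poly}, and do the degree bookkeeping $k-d\ge\lceil k/2\rceil$. All of that matches the paper's intent.

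You have, moreover, put your finger on a genuine imprecision that the paper's one-liner glosses over. Corollary~\ref{ccc} is stated for the symmetric case $\upalpha_{t,1}=\upalpha_{t,2}$, where $\upalpha_{t,1}/(\upalpha_{t,1}+\upalpha_{t,2})=1/2$ identically, so the degree--$1$ factors $A_{1,0}$, $A_{0,1}$ are automatically $O(\upalpha_t^{-1})$ via (\ref{psixl}). Under only the hypotheses of the present lemma, existence of $M_1,M_2$ forces $\upbeta_t/\upalpha_t$ to converge to a positive finite limit, but says nothing about the \emph{rate}; e.g.\ $\upalpha_t=t^{1/2}$, $\upbeta_t=t^{1/2}(1+1/\log t)$ satisfies (\ref{as1}) and (\ref{as2}) and has finite $M_1,M_2$, yet $A_{1,0}(\upalpha_t,\upbeta_t)=\Psi(\upalpha_t)-\Psi(\upalpha_t+\upbeta_t)-M_1\sim -1/(2\log t)$, which is not $O(\upalpha_t^{-1})=O(t^{-1/2})$, so the $k=1$ inequality (even with an absorbed constant) fails. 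Your observation that the missing ingredient is a rate condition on $\upbeta_t/\upalpha_t$, and that this is automatic in the power-law regime $\upalpha_t=c_1t^r$, $\upbeta_t=c_2t^r$ that actually feeds into Corollary~\ref{corollary1}, is the correct diagnosis. (A minor further point: the lemma and Corollary~\ref{ccc} are also not literally compatible, since the former has no multiplicative constant and the latter does; as you note, this is harmless for the use made of it in Theorem~\ref{moments}.) So your proof is sound and essentially the paper's, and the ``delicate point'' you flag is a real gap in the lemma's hypotheses rather than in your argument.
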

\begin{proof} The proof follows immediately from Corollary \ref{ccc}
  and
  the definition of matching moments.
  \end{proof}
      \medskip
      
      Let 
\[
h_t= \frac{\log\big(P_{0,\omega}(t, x(\theta)t)\big)-I(x(\theta))}{\sigma(\theta)t^{1/3}},
\]
where $P_{0,\omega}(t,y)$ is defined in (\ref{adopt}).

\medskip

Let $C^k(\mathbb R)$ be the set of functions $f:\mathbb R\to\mathbb R$
whose derivatives up to order $k$ are uniformly bounded. From the fact
that
the set $C^k(\mathbb R)$ is a convergence determining set of
functions (see for example \cite{EK86}),
Theorem \ref{moments} follows immediately from the following lemma.

\medskip

\begin{lemma}
Consider a family 
  of parameters  $(\upalpha_t,\upbeta_t)$ that 
satisfy (\ref{as1}), (\ref{as2}). Let $\theta>0$.
$(\mathbb P_t)_{t\ge 0}$ be a family of environmental laws 
which  matches moments up 
to order $k$ at rate $\upalpha_t^{-\left\lceil\frac{k}{2}\right\rceil}$ with  $(\mathbb 
P_{\upalpha_t,\upbeta_t})_{t\ge 0}$.
Let $\varphi \in C^k(\mathbb R)$. Then there is a
$\newconstant\label{13} >0$ such that 
\[
\big\vert\mathbb E_t[\varphi(h_t)]-\mathbb
E_{\upalpha_t,\upbeta_t}[\varphi( h_t)] \big\vert \le
\oldconstant{13}\frac{t^2 \upalpha_t^{-\left\lceil\frac{k}{2}\right\rceil}
}{\sigma(\theta)t^{1/3}}. 
\]
\end{lemma}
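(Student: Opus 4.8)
The plan is to prove the displayed estimate by a Lindeberg--type replacement run site by site, as in \cite{KQ18}: swap the i.i.d.\ environment variables one at a time, using the moment--matching hypothesis to annihilate the low--order terms of a Taylor expansion of $\varphi(h_t)$ in the swapped variable, while the path--sum structure of $P_\omega$ supplies the derivative bounds that control the remaining terms.

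Fix $t$ and set $y=\lfloor x(\theta)t\rfloor$. Writing $\omega_\pm(x,s)=e^{-\xi_\pm(x,s)}$, the probability entering $h_t$ is a finite sum, over nearest--neighbour paths from $0$ to $y$ in $t$ steps, of the products of the transition probabilities along them; hence it is a smooth, positive function of the pairs $\zeta_{x,s}:=(\xi_+(x,s),\xi_-(x,s))$, and it depends only on the $N=O(t^2)$ space--time points $(x,s)$ with $0\le s\le t-1$ that can lie on such a path. Under both $\mathbb P_t$ and $\mathbb P_{\upalpha_t,\upbeta_t}$ the environment variables at distinct space--time points are i.i.d. Enumerate the relevant pairs $\zeta_1,\dots,\zeta_N$ and let $\mathbb Q_j$ be the product law under which $\zeta_1,\dots,\zeta_j$ follow $\mathbb P_t$ and $\zeta_{j+1},\dots,\zeta_N$ follow $\mathbb P_{\upalpha_t,\upbeta_t}$, so that $\mathbb Q_0=\mathbb P_{\upalpha_t,\upbeta_t}$, $\mathbb Q_N=\mathbb P_t$, and, by independence,
\[
\mathbb E_t[\varphi(h_t)]-\mathbb E_{\upalpha_t,\upbeta_t}[\varphi(h_t)]=\sum_{j=1}^N\Big(\mathbb E_{\mathbb Q_j}[\varphi(h_t)]-\mathbb E_{\mathbb Q_{j-1}}[\varphi(h_t)]\Big),
\]
where the $j$-th summand is the $\mathbb Q_{j-1}$-expectation over all pairs but $\zeta_j$ of $(\mathbb E_{\mathbb P_t}-\mathbb E_{\mathbb P_{\upalpha_t,\upbeta_t}})$ applied to $\varphi(h_t)$ viewed as a function of the single pair $\zeta_j$.

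The second ingredient is a uniform bound on the $\zeta_{x,s}$-derivatives of $F:=\varphi(h_t)$ at one site. A path visits a given $(x,s)$ at most once and cannot step both ways from it, so $\partial_{\xi_+(x,s)}^m\partial_{\xi_-(x,s)}^{m'}P_\omega=0$ when $m,m'\ge1$, while differentiating only in $\xi_+(x,s)$ (or only in $\xi_-(x,s)$) merely multiplies by $-1$ the sub--sum over the paths that leave $(x,s)$ in the corresponding direction; thus $|\partial^\gamma_{\zeta_{x,s}}P_\omega|\le P_\omega$, i.e.\ $|\partial^\gamma_{\zeta_{x,s}}P_\omega/P_\omega|\le1$, for every multi--index $\gamma$. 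Expanding $\log$ by the higher--order chain rule gives $|\partial^\gamma_{\zeta_{x,s}}\log P_\omega|\le C_{|\gamma|}$, hence $|\partial^\gamma_{\zeta_{x,s}}h_t|\le C_{|\gamma|}/(\sigma(\theta)t^{1/3})$ for $|\gamma|\ge1$; applying the chain rule once more, together with $\|\varphi^{(r)}\|_\infty<\infty$ and $\sigma(\theta)t^{1/3}\to\infty$ (this is \eqref{limti}, which holds under \eqref{as1}--\eqref{as2}), one obtains $|\partial^\gamma_{\zeta_{x,s}}F|\le C/(\sigma(\theta)t^{1/3})$ for $1\le|\gamma|\le k$, with $C$ depending only on $k$, $\theta$ and $\varphi$.

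It remains to estimate each summand. Fix $j$ and Taylor--expand $F$, as a function of $\bar\zeta_j=(\bar\xi_+(x_j,s_j),\bar\xi_-(x_j,s_j))$, about $\bar\zeta_j=0$ to order $k$. Taking $\mathbb E_{\mathbb P_t}-\mathbb E_{\mathbb P_{\upalpha_t,\upbeta_t}}$ over $\bar\zeta_j$, the $|\gamma|=0$ term cancels; each term of degree $1\le|\gamma|\le k-1$ has a $\bar\zeta_j$-independent coefficient of size $\le C/(\sigma(\theta)t^{1/3})$ multiplied by $\mathbb E_t[\bar\zeta_j^\gamma]-\mathbb E_{\upalpha_t,\upbeta_t}[\bar\zeta_j^\gamma]$, which the matching hypothesis bounds by $\upalpha_t^{-\lceil k/2\rceil}$; and the order--$k$ remainder, dominated by $C/(\sigma(\theta)t^{1/3})$ times the degree--$k$ moments of $\bar\zeta_j$, is handled by Corollary~\ref{ccc} together with the matching hypothesis, which is where the sharp degree--by--degree decay $|\mathbb E_{\upalpha_t,\upbeta_t}[\bar\xi_1^{i_1}\bar\xi_2^{i_2}]|\le C\,\upalpha_t^{-\lceil(i_1+i_2)/2\rceil}$ and the matching at the precise rate $\upalpha_t^{-\lceil k/2\rceil}$ enter. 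Hence each of the $N=O(t^2)$ summands is $\le C\,\upalpha_t^{-\lceil k/2\rceil}/(\sigma(\theta)t^{1/3})$ uniformly in $j$, and summing yields the claimed estimate; given the lemma, Theorem~\ref{moments} follows, since under \eqref{tta} the right--hand side tends to $0$ and so $\mathbb E_t[\varphi(h_t)]$ and $\mathbb E_{\upalpha_t,\upbeta_t}[\varphi(h_t)]$ share the limit provided by Theorem~\ref{int-dis} for all $\varphi$ in a convergence--determining class. The step I expect to be the genuine obstacle is exactly the control of this order--$k$ remainder at size $\upalpha_t^{-\lceil k/2\rceil}$: reaching that size, rather than the larger one dictated by crude absolute--moment bounds, forces one to track precisely which mixed moments $\mathbb E[\bar\xi_+^a\bar\xi_-^b]$ occur and to exploit the sharp rates of Corollary~\ref{ccc}, uniformly in the site $j$ and in $t$; the rest is the routine i.i.d.\ Lindeberg accounting (with the factor $t^2$ arising as the number of relevant sites) and the elementary path--sum differentiation.
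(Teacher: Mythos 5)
Your proposal follows the same strategy as the paper's proof: isolate one space--time vertex $z$, exploit that $P_\omega$ is an affine function of $\bigl(e^{-\bar\xi_+(z)},e^{-\bar\xi_-(z)}\bigr)$ (with no cross term, because a nearest--neighbour path visits $z$ at most once and steps only one way), Taylor--expand $\varphi(h_t)$ in $(\bar\xi_+(z),\bar\xi_-(z))$ to order $k$, invoke Fa\`a di Bruno together with $|\partial^\gamma P_\omega|\le P_\omega$ to bound every derivative of $h_t$ by $C/(\sigma(\theta)t^{1/3})$, use the moment--matching hypothesis to kill the degree $<k$ terms and Corollary~\ref{ccc} to control the order--$k$ remainder, and then sum over the $O(t^2)$ relevant vertices. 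The one genuine difference is that you make the Lindeberg replacement scheme explicit with the interpolating measures $\mathbb Q_j$ and the telescoping sum; the paper simply expands at a single $z$, asserts that the analogous expansion for $\mathbb E_{\upalpha_t,\upbeta_t}$ has "the same coefficients $a_j$," and concludes by "summing over all $z$." As written the paper's assertion is only literally true when all sites other than $z$ carry a common law, which is exactly what your interpolated measures provide; so your version supplies the bookkeeping the paper leaves implicit, and is the cleaner rendering of what is really the same argument. Both treatments share the same small gap in the remainder estimate: what is needed there is a bound on the absolute moments $\mathbb E[|\bar\xi^\alpha|]$ for $|\alpha|=k$, whereas the matching hypothesis and Corollary~\ref{ccc} as stated bound only $|\mathbb E[\bar\xi^\alpha]|$; this is easily patched (the same polygamma analysis yields the absolute--moment bound), but since you reproduced rather than repaired this point, it is worth flagging.
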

\begin{proof} Let  $z\in\mathbb Z\times\mathbb N$ be  a vertex with
 $z=(v,s)$ for some $v\in\mathbb Z$ and $0\le s\le t$.
For each    $y_1,y_2\in\mathbb R$, define 
    
      \begin{equation}
        \nonumber
        P_\omega(y_1,y_2)=P_\omega^c+  e^{y_1+M_1}P_\omega^++e^{y_2+M_2}P_\omega^-,
    \end{equation}    
    where 

    \begin{itemize}
    \item[(i)] $P_\omega^c$ is the probability that
      $X_t\in A_{t,x(\theta)}$ and that $X$ does not pass through
      the vertex $z$, 
    \item[(ii)] $P_\omega^-$  is the probability that $X_t=A_{t,x(\theta)}$ and $X$ passes through the edge $z$, so that 
      $X_s=v$, $X_{s+1}=v+1$, but with 
      $\omega_+(z)=1$.
    \item[(ii)] $P_\omega^+$  is the probability that $X_t=A_{t,x(\theta)}$ and $X$ passes through the edge $z$, so that 
      $X_s=v$, $X_{s+1}=v-1$, but with 
      $\omega_-(z)=1$.
      \end{itemize}
    Let 
    \[
    h(y_1,y_2):=\frac{\log P_\omega(y_1,y_2)+I(x(\theta))t}{t^{1/3}\sigma(\theta)}.
    \]
    Fixing all  weights in the disorder at edges different from $f$
    and defining $g(y_1,y_2):=\varphi(h(y_1,y_2))$ we now use  Taylor's 
    theorem for $g(y_1,y_2)$ expanding it at $(y_1,y_2)=(0,0)$
    to conclude that 
    
    \begin{align*}
  &  \varphi(h(\bar\xi_+(z),
    \bar\xi_-(z)))=g(\bar\xi_+(z),\bar\xi_-(z))=
    g(0,0)+
    \sum_{i=1}^2 g_{i}(0,0)\bar\xi_i\\
      &+\frac{1}{2}\sum_{i,j=1}^2 g_{i,j}(0,0)\bar\xi_i\bar\xi_j 
  + \cdots+ \sum_{i_1,\ldots, i_{k-1}=1}^{2}\frac{1}{(k-1)!}
        g_{i_1,\ldots,i_{k-1}}(0)\bar\xi_{i_1}\cdots\bar\xi_{i_{k-1}}\\
    &  + \sum_{i_1,\ldots, i_{k}=1}^{2}\frac{1}{k!} g_{i_1,\ldots,i_{k}}(s\bar\xi_+(z),s\bar\xi_-(z))\bar\xi_{i_1}\cdots\bar\xi_{i_{k}},
    \end{align*}
    for some $s\in(0,1)$, where for $i_1,\ldots,i_j\in \{0,1\}$,

    $$
g_{i_1,\ldots,i_j}=\frac{\partial }{\partial
  y_{i_1}}\cdots \frac{\partial }{\partial y_{i_j}} g,
$$
and
    $\bar\xi_1:=\xi_+(z)$ and $\bar\xi_2:=\xi_-(z)$. Taking expectation and using the 
    independence of $(\xi_{x,t})_{(x,t)\in\mathbb{Z}\times\mathbb N}$, 
    we get that 
    \begin{align}
\nonumber&      \mathbb E_t[\varphi(h(\bar \xi_+(z),\bar\xi_-(z)))]=
      a+
    \sum_{i=1}^2 a_{i}\mathbb E_t [\bar\xi_i]\\
     \nonumber &+\frac{1}{2}\sum_{i,j=1}^2 a_{i,j}\mathbb E_t\left[\bar\xi_i\bar\xi_j\right] 
  + \cdots+ \sum_{i_1,\ldots, i_{k-1}=1}^{2}\frac{1}{(k-1)!}
        a_{i_1,\ldots,i_{k-1}}\mathbb E_t[\bar\xi_{i_1}\cdots\bar\xi_{i_{k-1}}]\\
   \label{phie} &  + \sum_{i_1,\ldots, i_{k}=1}^{2}\frac{1}{k!}
   \mathbb E_t[   g_{i_1,\ldots,i_{k}}(s\bar\xi_+(z),s\bar\xi_-(z))\bar\xi_{i_1}\cdots\bar\xi_{i_{k}}],
    \end{align}
    where

    $$
a=\mathbb E_t[g(0,0)]
$$
and
$$
a_{i_1,\ldots,i_j}=\mathbb E_t[ g_{i_1,\ldots,i_j}(0,\ldots,0)].
$$
Now,  we will prove  that

\begin{equation}
  \label{gg1}
  |a_{i_1,\ldots,i_j}|=\lvert g_{i_1,\ldots,i_j}(0,0)\rvert\le 
  \frac{\newc}{t^{1/3}\sigma(\theta)}
  \end{equation}
for all $0\le j\le k-1$ and

\begin{equation}
  \label{gg2}
  |\bar a_{i_1,\ldots,i_k}|:=\lvert g_{i_1,\ldots,i_k}(y_1,y_2)\rvert\le 
  \frac{\newc}{t^{1/3}\sigma(\theta)}.
\end{equation}
We have an expansion analogous to (\ref{phie}) for $\mathbb
E_{\upalpha_t,\upbeta_t}[
\phi(h(\bar\xi_+(z),\bar\xi_-(z)))]$, with the same coeffiecients
$a_j$ for $0\le j\le k$.
    We use Fa\`{a} di Bruno's formula for the
    chain rule of a composition
    (here we use the multivariate version, see \cite{KQ18}),

    $$
    g_{i_1,\ldots,i_j}(y)=\sum_{\pi\in\Pi}\varphi^{(|\pi|)}(y)\prod_{B\in\pi}
    \frac{\partial^{|B|}h}{\prod_{j\in B}\partial y_j},
    $$
    where $\Pi$ is the set of partitions of $\{1,\ldots,j\}$, $\pi$
    is an arbitrary partition of $\Pi$, $|\pi|$ is the number of
    blocks in the partition $\pi$, $B\in\pi$ means that the variable
    $B$ runs through all the blocks of $\pi$, $|B|$
    is the size of block $B$ and the variables $y_1,\ldots,y_j$
    take only the values $y_1$ and $y_2$ (with some abuse of
    notation). Since by assumption $\phi\in C^k(\mathbb
    R)$,
    it is enough to bound the derivatives $
    \frac{\partial^{|B|}h}{\prod_{j\in B}\partial y_j}$.
    Now,

    $$
    \frac{\partial P_\omega(y_1,y_2)}{\partial y_i}
    =
    \frac{e^{y_i+M_i} P^+_\omega}{P^c_\omega+e^{y_1+M_1}P^+_\omega+e^{y_2+M_2}P^-_\omega}=:p_i(y_1,y_2),
    $$
    for $i=1,2$. We then obtain for the higher
    order derivatives with $k_1+k_2=k$,

    $$
\frac{\partial^k P_\omega(y_1,y_2)}{\partial^{k_1} y_1\partial^{k_2}
  y_2}
=\mathcal P_{k_1,k_2}(p_1(y),p_2(y)),
$$
where the following recursion formula holds,

$$
\mathcal P_{k_1+1,k_2}(p_1(y),p_2(y))
= \mathcal P^{(1,0)}_{k_1,k_2}(p_1(y),p_2(y))p^{(1)}_1(y)
+\mathcal P^{(0,1)}_{k_1,k_2}(p_1(y),p_2(y))p^{(1)}_2(y),
$$
and a similar recursion formula for $\mathcal P_{k_1,k_2+1}(p_1(y),p_2(y))$,
where $\mathcal P^{(1,0)}$ and $\mathcal P^{(0,1)}$ are the
partial derivatives of $\mathcal P$ with respect to its first
and second variable respectively, and $p^{(i)}_j$ is the
partial derivative of $p_j$ with respect to $y_i$, $1\le i,j\le 2$.
This proves (\ref{gg1}) and (\ref{gg2}).

Now, from (\ref{phie}), (\ref{gg1}) and (\ref{gg2}) (and the
corresponding
expansion for $\mathbb E_{\upalpha_t,\upbeta_t}[\varphi(h(\bar 
            \xi_+(z),\bar\xi_-(z)))]$, we get that

        \begin{align*}
          & \left|     \mathbb E_t[\varphi(h(\bar 
            \xi_+(z),\bar\xi_-(z)))]-\mathbb E_{\upalpha_t,\upbeta_t}[\varphi(h(\bar 
            \xi_+(z),\bar\xi_-(z)))]\right|
          \\
          &\le
            \left(|a|+\sum_{i=1}^2 |a_i|+\sum_{i,j=1}^2 a_{i,j}+
            \sum_{i_1,\ldots,i_{k-1}}
            |a_{i_1,\ldots,i_{k-1}}|\right.\\
          &+\left. \sum_{i_1,\ldots,i_{k-1}}
            \bar a_{i_1,\ldots,i_{k-1}}
            \right) \upalpha_t^{-\left\lceil\frac{k}{2}\right\rceil}\\
          &\le \frac{\newc \upalpha_t^{-\left\lceil\frac{k}{2}\right\rceil}
            }{t^{1/3}\sigma(\theta)}.
    \end{align*}
    Summing over all $z$ that can be reached  up to time $t$
    starting
    from $(0,0)$ we finish the proof of the lemma.
  \end{proof}

\medskip

\subsection{Proof of Corollary \ref{corollary1}}
Consider a family of Beta random walks with parameters
$(\upalpha_{t,1},\upalpha_{t,2})_{t\ge 0}$, with
$\upalpha_{t,1}=\upalpha_{t,2}= t^r$. Note that
since $r\in (0,1)$
 conditions (\ref{as1}) and (\ref{as2}) are satisfied.
 Now, using the fact that

 $$
\sigma(\theta)\sim t^{-r} \qquad \mathfrak g(\upalpha,\upbeta)=\frac{1}{2}t^{-r},
$$
as $t\to\infty$ (see part $(iii)$  of Corollary \ref{corcor}),note that an integer $k$ satisfies (\ref{tta}) if and only if

$$
\left\lceil\frac{k}{2}\right\rceil>\frac{5}{3r}+1.
$$
Let us denote
by $\mathbb E'_{\upalpha_t}$ the expectation with respect
to the environment of this Beta random walk. Recall
that the parameters of the Dirichlet environment
are $\upalpha_{t,1}=\upalpha_{t,2}=t^r$, $|\upalpha_{t,3}|\le
t^{-p}$ and $\upalpha_{t,4}\le t^{-p}$.
On the other hand, note that

\begin{align}
\nonumber &\left|\mathbb E_{\upalpha_t}[\bar\xi_+]-\mathbb E'_{\upalpha_t}[\bar\xi_+]\right|
=\left|\Psi(\upalpha_{t,1})-\Psi(2\upalpha_{t,1}+2\upalpha_{t,3})
                    -\Psi(\upalpha_{t,1})+\Psi(2\upalpha_{t,1})\right|\\
 \label{bbbb} &\le \oldc{kkk}\frac{\upalpha_{t,3}}{\upalpha_{t,1}}\le \oldc{kkk}
    t^{-(r+p)}\le\oldc{kkk2}  t^{-r\left\lceil\frac{k}{2}\right\rceil},
\end{align}
for some constants $\newc\label{kkk}>0$, $\newc\label{kkk2}>0$,
and the last inequality is satisfied only when

$$
p\ge r\left\lceil\frac{k}{2}\right\rceil-r.
$$
For higher order moments, using the recursions (\ref{recursion}), note that the
shifted logarithmic moments of the random walk in Dirichlet
environment
can be obtained from the shifted logarithmic moments of the Beta
random walk
by changing in all the expressions involving the polygamma functions
the sum
$\sum_{i=1}^2\upalpha_{t,i}$ by $\sum_{i=1}^4\upalpha_{t,i}$. Hence, the same
bound (\ref{bbbb}) will be  satisfied for the differences between
higher
order moments. Now choose $k\ge 1$ so that

$$
\left\lceil\frac{k}{2}\right\rceil =\left\lceil\frac{5}{3r}-\frac{1}{3}\right\rceil. 
$$
Note that this is possible because $\frac{5}{3r}-\frac{1}{3}>0$ for
all $r\in (0,1)$.
Hence, it is enough to choose $p$ so that

$$
p\ge r \left\lceil\frac{5}{3r}-\frac{1}{3}\right\rceil-r. 
$$

\medskip

\end{document}